\newtheorem {theorem}{Theorem}[section]
\newtheorem {lemma}[theorem]{Lemma}
\newtheorem {proposition}[theorem]{Proposition}
\newtheorem {conjecture}[theorem]{Conjecture}
\theoremstyle{remark}
\newtheorem {remark}[theorem]{Remark}
\newtheorem {assumption}[theorem]{Assumption}
\def\zz {{\mathbb{Z}}}
\def\rr {{\mathbb{R}}}
\def\qq {{\mathbb{Q}}}
\def\Q {\mathcal{Q}}
\def\ff {\mathbb{F}}
\def\X {{X}}
\def\Z {{Z}}
\def\L {\mathscr{L}}
\def\F {\mathcal{F}}
\def\R{\mathcal{R}}
\def\Sym{\mathrm{Sym}}
\newcommand\Ta{\mathbb{T}_\alpha}
\newcommand\Tb{\mathbb{T}_\beta}
\def\RR {\mathfrak{R}}
\def\KR {\operatorname{KR}}
\def\Bkr {\B_{\operatorname{KR}}}
\def\hBkr {\hB_{\operatorname{KR}}}
\def\Ckr {\C_{\operatorname{KR}}}
\def\Hkr {H_{\operatorname{KR}}} 
\def\bHkr {\overline{H}_{\operatorname{KR}}} 
\def\K {\mathcal{K}}
\def\Koszul {\mathscr{K} \! }
\def\B {\mathscr{B}}
\def\hB {\mathcal{B}}
\def\bH{\overline{H}}
\def\x {\mathbf{x}}
\def\y {\mathbf{y}}
\def\M {\mathcal{M}}
\def\del {\partial}
\def\XX {\mathcal{X}}
\def\YY {\mathcal{Y}}
\def\Malg {M_{\operatorname{alg}}}
\def\C {\mathcal{C}}
\def\U {\mathcal{U}}
\DeclareMathOperator{\rk}{rk}
\def\gr {\operatorname{gr}}
\def\tgr {\widetilde \gr}
\def\tg {{\Gamma}}
\def\Into {{\operatorname{in}}}
\def\Outof {{\operatorname{out}}}
\def\In {{\operatorname{In}}}
\def\Out {{\operatorname{Out}}}
\def\Tor {{\operatorname{Tor}}}
\def\tot { \operatorname{tot}}
\def\old { \operatorname{old}}
\def\new { \operatorname{new}}
\def\ein {{\In(W)}}
\def\eout {{\Out(W)}}
\def\ss {{\mathcal{S}}}
\def\hh {{\mathcal{H}}}
\begin{document}

\title{An untwisted cube of resolutions for knot Floer homology}

\author[Ciprian Manolescu]{Ciprian Manolescu}
\thanks {The author was partially supported by NSF grants DMS-0852439 and DMS-1104406.}
\address {Department of Mathematics, UCLA, 520 Portola Plaza\\ 
Los Angeles, CA 90095}
\email {cm@math.ucla.edu}

\begin{abstract}
Ozsv\'ath and Szab\'o gave a combinatorial description of knot Floer homology based on a cube of resolutions, which uses maps with twisted coefficients. We study the $t=1$ specialization of their construction. The associated spectral sequence converges to knot Floer homology, and we conjecture that its $E_1$ page is isomorphic to the HOMFLY-PT chain complex of Khovanov and Rozansky. At the level of each $E_1$ summand, this conjecture can be stated in terms of an isomorphism between certain Tor groups. As evidence for the conjecture, we prove that such an isomorphism exists in degree zero.
\end {abstract}

\maketitle

\section {Introduction}

Knot homology theories are among the most effective tools for studying knots in $S^3$. Roughly, knot homologies are of two types. The first have their origins in representation theory and quantum topology. Examples include Khovanov's categorification of the Jones polynomial, and 
Khovanov and Rozansky's categorification of the quantum $sl(n)$ polynomial and the HOMFLY-PT  polynomial; see  \cite{Khovanov}, \cite{KR1}, \cite{KR2}. The second type of knot homologies are those with origins in gauge theory and symplectic geometry. The most studied among these is the {\em knot Floer homology} of Ozsv\'ath-Szab\'o and Rasmussen (\cite{Knots}, \cite{RasmussenThesis}).

Knot Floer homology admits several purely combinatorial descriptions, some coming with appropriate combinatorial proofs of invariance; see \cite{MOS}, \cite{MOST}, \cite{SarkarWang}, \cite{CubeResolutions}, \cite{Gilmore}, \cite{BaldwinLevine}. The description given by Ozsv\'ath and Szab\'o in \cite{CubeResolutions} is the one closest in spirit to the usual definitions of the representation-theoretic knot homologies. It is based on establishing an exact triangle for knot Floer homology that involves singular links, and uses twisted coefficients. Given a braid diagram for a knot, an iteration of this triangle produces a spectral sequence, which is shown to collapse at the $E_2$ page. This page is then described combinatorially. There are in fact two variants of the spectral sequence, corresponding to the two variants of knot Floer homology denoted $\widehat{HFK}$ (whose graded Euler characteristic is the Alexander polynomial $\Delta_K(T)$) and $HFK^-$ (whose graded Euler characteristic is $\Delta_K(T)/(1-T)$) .

As mentioned in \cite{CubeResolutions}, if the maps in the spectral sequences were untwisted, the results would look very similar to the HOMFLY-PT homology of Khovanov and Rozansky from \cite{KR2}. The purpose of this paper is to give evidence for a precise conjecture connecting the untwisted spectral sequences and HOMFLY-PT homology.

To fix notation, let $K$ be an oriented knot in $S^3$ with a decorated braid projection $\K$, as in \cite{CubeResolutions}. Specifically, $\K$ consists of a braid diagram drawn vertically, with the strands oriented upwards, and closed up by taking the top strands around to the right of the braid, so that the resulting planar diagram represents the knot $K$. Further,  in \cite{CubeResolutions} one of the leftmost edges in the braid is distinguished; for convenience, we will always take this to be the top leftmost one, which is one of the strands closed up when taking the braid closure (and thus is also the rightmost edge in the planar diagram). See Figure~\ref{fig:fig8} for an example. Let $c(\K)$ be the set of crossings in $\K$, and let $n$ be the number of crossings. We denote by $E = \{e_0, \dots, e_{2n} \}$ the set of edges in the diagram, where the distinguished edge is viewed as subdivided in two. We choose the ordering of the edges so that $e_0$ is the second segment on the distinguished edge, according to the orientation of $\K$. 

\begin {figure}
\begin {center}
\input {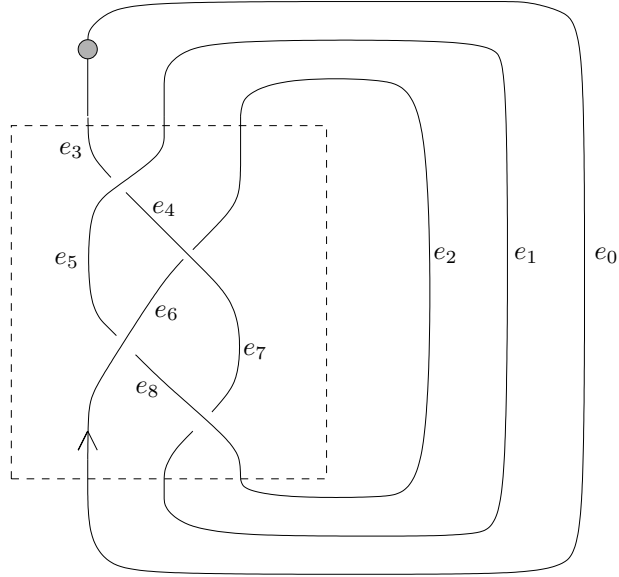}
\caption {A decorated braid projection for the figure-eight knot. The gray dot marks the subdivision of the distinguished edge.}
\label {fig:fig8}
\end {center}
\end {figure}

If $p$ is a crossing in $\K$, we define the {\em smoothing} of $\K$ at $p$ to be its oriented resolution  at $p$ (which is a link diagram with one fewer crossing), with two valence two vertices added, one on each side of where the crossing was. We also define the {\em singularization} of $\K$ at $p$ to be the diagram obtained from $\K$ by replacing the crossing at $p$ with a double point (resulting in a diagram for a singular link). A {\em complete resolution} $S$ of $\K$ is a diagram obtained from $\K$ by assigning smoothings or singularizations to all crossings. Thus, there are $2^n$ possible complete resolutions; each of them is a planar graph with vertices of valence either two or four. The point where the distinguished edge is subdivided (the gray dot in Figure~\ref{fig:fig8}) is not a vertex, but rather a place where the edge is cut open into two segments.

Let $\R$ be the polynomial algebra $\zz[U_0, \dots, U_{2n}]$. Each variable $U_i$ corresponds to an  edge $e_i \in E$. Given a complete resolution $S$ of $\K$, we will define an $\R$-module $\hB(S)$ as follows. 

First, let $c(S) \subseteq c(\K)$ be the subset of crossings of $\K$ that were singularized in $S$ (that is, the set of four-valent vertices in the graph of associated to $S$). At any $p \in c(S)$, if we denote by $a$ and $b$ the two outgoing edges, and by $c$ and $d$ the two incoming edges (as in Figure~\ref{fig:res}), we define the element
\begin {equation}
\label {eq:Lofp}
 L(p) = U_a + U_b - U_c - U_d \in \R.
 \end {equation}
We denote by $L_S \subset \R$ the ideal generated by all the elements $L(p)$ for $p \in c(S)$.

\begin {figure}
\begin {center}
\input {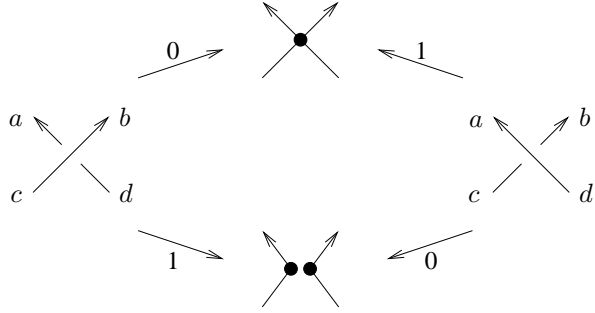}
\caption {The top picture represents the singularization of a crossing, and the bottom picture the smoothing. The smoothing and the singularization are also called the 0- and 1-resolutions of the crossing; which is which depends on whether the original crossing is positive or negative, as shown in the figure.}
\label {fig:res}
\end {center}
\end {figure}
 
Next, let $W$ be a collection of  vertices in the graph of $S$. (Here, the two loose ends that result from cutting the distinguished edge are not considered vertices.) We denote by $\Into (W)$ and $\Outof(W)$ be the sets of incoming and outgoing edges of $W$. We set
$$ \In(W) = \Into(W) \setminus \Outof(W), \ \Out(W) = \Outof(W) \setminus \Into (W),$$
and define $N_S \subset \R$ to be the ideal generated by the elements
$$ N(W) = \prod_{e \in \Out(W)} U_e  \ -  \prod_{e \in \In(W)} U_e,$$
over all possible collections $W$. 
 
Let also $V_S$ be the free $\R$-module spanned by the connected components of $S$ that do not contain the edge $e_0$. Let $\Lambda^*V_S$ be the exterior algebra of $V_S$. We define
\begin {equation}
\label {eq:hb}
 \hB(S) := \Tor_*(\R/L_S, \R/N_S) \otimes_{\R} \Lambda^* V_S,
 \end {equation}
where the $\Tor$ groups are taken over $\R$. 

Next, we organize all the complete resolutions of $\K$ into a hypercube, as in \cite{KR1}, \cite{KR2}, \cite{CubeResolutions}. If $p \in c(\K)$ is a positive crossing, we define the 0-resolution of $K$ at $p$ to be its singularization at $p$, and its 1-resolution to be the smoothing at $p$. If $p$ is a negative crossing, we let the $0$-resolution be the smoothing and its 1-resolution the singularization. (See Figure~\ref{fig:res}.) With these conventions, for any assignment $I: c(\K) \to \{0,1\}$, we obtain a complete resolution $S_I(\K)$.

Consider the direct sum
$$ \C(\K) = \bigoplus_{I: c(\K) \to \{0,1\}}  \hB(S_I(\K)).$$

Let $\ff = \zz/2\zz$. In \cite{CubeResolutions}, Ozsv\'ath and Szab\'o built a spectral sequence using modules over the base ring $\R \otimes_\zz \ff[t^{-1}, t]]$, where $\ff[t^{-1}, t]]$ is the field of half-infinite Laurent power series. Specializing to $t=1$, their result reads as follows:

\begin {theorem}[Ozsv\'ath-Szab\'o \cite{CubeResolutions}]
\label {thm:os}
Let $K \subset S^3$ be an oriented knot, and $\K$ a decorated braid projection of $K$, as above. 

$(a)$ There is a spectral sequence whose $E_1$ page is isomorphic to $\C(\K) \otimes_\zz \ff$, and which converges to the knot Floer homology $HFK^-(K)$  with coefficients in $\ff$.

$(b)$ There is a spectral sequence whose $E_1$ page is isomorphic to ${\C}(\K)/(U_0 = 0) \otimes_\zz \ff$, and which converges to the knot Floer homology $\widehat{HFK}(K)$ with coefficients in $\ff$.
\end {theorem}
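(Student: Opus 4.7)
The plan is to deduce the theorem directly from the main construction of Ozsv\'ath-Szab\'o in \cite{CubeResolutions}, observing that every step of their argument survives the specialization $t=1$. The essential ingredient is the knot Floer skein exact triangle involving singularizations: for each crossing $p\in c(\K)$, there is a triangle relating $HFK^-$ of $\K$, of the smoothing at $p$, and of the singularization at $p$, where the maps are taken with twisted coefficients over the ring $\R\otimes_\zz\ff[t^{-1},t]]$. Iterating this triangle over all $n$ crossings produces a cube-shaped chain complex whose total homology computes $HFK^-(K;\ff[t^{-1},t]])$, and the associated hypercube filtration gives a spectral sequence with $E_1$ page equal to the direct sum of the vertex complexes, indexed by $I\colon c(\K)\to\{0,1\}$.

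The second step is to identify each vertex summand with $\hB(S_I(\K))$ after specialization. At the vertex indexed by $I$, the associated chain complex is Ozsv\'ath-Szab\'o's Heegaard Floer complex for the singular (partially resolved) diagram $S_I(\K)$. Using their computation of that complex, its homology can be presented as a Koszul-style $\Tor$ group: the linear relations $L(p)=U_a+U_b-U_c-U_d$ at each singular vertex $p$ give the ideal $L_{S_I(\K)}$, the multiplicative relations $N(W)$ coming from placing basepoints on regions of the planar graph give the ideal $N_{S_I(\K)}$, and the free factors $\wedge^*V_{S_I(\K)}$ account for the closed components of $S_I(\K)$ disjoint from the distinguished edge. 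Thus, exactly by the definition \eqref{eq:hb}, the vertex at $I$ contributes $\hB(S_I(\K))$ to $E_1$.

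The step I expect to require the most care is ensuring that the $t=1$ specialization is well defined on the full cube. Ozsv\'ath-Szab\'o work over $\ff[t^{-1},t]]$ because a priori some Heegaard-Floer boundary counts yield infinite sums of $t$-powers. One verifies that, on the generators of the $E_1$ page, only finitely many of the relevant moduli spaces contribute after one passes to the cube filtration, and, moreover, that the structure constants lie in $\ff[t^{-1},t]$; consequently substituting $t=1$ turns every polynomial identity into one over $\R\otimes_\zz\ff$. The resulting specialized cube is a genuine chain complex over $\R\otimes_\zz\ff$, its filtration spectral sequence has $E_1\cong\C(\K)\otimes_\zz\ff$ as just identified, and it converges to the $t=1$ specialization of $HFK^-(K;\ff[t^{-1},t]])$, which is $HFK^-(K;\ff)$, proving part (a).

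Part (b) follows from the standard mechanism relating $HFK^-$ to $\widehat{HFK}$: placing an additional basepoint on the distinguished edge kills the action of the corresponding variable, which in the present labeling is $U_0$. The cube construction is compatible with this quotient at every vertex because none of the ideals $L_{S_I(\K)}$ or $N_{S_I(\K)}$ depends on $U_0$ in a way that obstructs the substitution. Therefore setting $U_0=0$ throughout gives a spectral sequence with $E_1$ page $\C(\K)/(U_0=0)\otimes_\zz\ff$ converging to $\widehat{HFK}(K;\ff)$, as claimed.
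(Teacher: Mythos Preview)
Your outline follows the same broad architecture as the paper---iterate the skein triangle into a hypercube, filter, and identify the $E_1$ summands---but it glosses over precisely the two points the paper singles out as requiring new work in the untwisted setting.

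First, the treatment of the $t=1$ specialization is not how the paper proceeds, and your version has a gap. You propose to run the twisted construction and then argue that the structure constants happen to lie in $\ff[t^{-1},t]$ so that one may set $t=1$. But this claim is unsupported: in the twisted theory the extra markings used to define the $t$-weights are what make finiteness transparent, and there is no a priori reason the disk counts are Laurent polynomials rather than power series. The paper avoids this entirely by never passing through the twisted theory: it builds the Floer complexes directly over $\RR=\ff[U_0,\dots,U_{2n}]$ and proves (Lemma~\ref{lem:adm}) that the planar Heegaard diagrams are admissible, so that the sums in \eqref{eq:delhfk} are finite from the start. Your sentence ``only finitely many of the relevant moduli spaces contribute after one passes to the cube filtration'' is exactly the admissibility statement, but asserting it is not the same as proving it; the paper's proof goes by exhibiting an explicit basis $\{\pi_i\}$ of periodic domains and checking signs.

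Second, and more substantively, your identification of the vertex homology with $\hB(S_I(\K))$ hides the main difference from the twisted case. In \cite{CubeResolutions} the Floer homology of a \emph{disconnected} complete resolution vanishes, so only connected resolutions contribute to $E_1$. Over $\ff$ this is false: disconnected resolutions contribute nontrivially, and the factor $\wedge^* V_S$ in \eqref{eq:hb} is precisely what records the extra components. The paper establishes this via Lemma~\ref{lem:disjoint}, a K\"unneth-type formula $CFL^-(S)\sim CFL^-(S_0)\otimes_\ff\cdots\otimes_\ff CFL^-(S_\ell)\otimes_\ff H_*(T^{\ell-1})$ proved by a connected-sum argument; the torus homology factor is the source of the exterior algebra. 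Your sentence ``the free factors $\wedge^*V_{S_I(\K)}$ account for the closed components'' is correct as a slogan but skips the actual argument, which does not appear in \cite{CubeResolutions} and is one of the two genuinely new lemmas in this section.
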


Theorem~\ref{thm:os} is expected to hold also with coefficients in $\zz$ rather than $\ff$, but at the moment the orientations for link Floer complexes are not fully worked out in the literature.

An important difference between the spectral sequences in Theorem~\ref{thm:os} (with untwisted coefficients) and the original ones in \cite{CubeResolutions} (with twisted coefficients) is that the latter collapse at the $E_2$ stage for grading reasons. Because of this property, Ozsv\'ath and Szab\'o were able to use their spectral sequences to give their combinatorial descriptions of the knot Floer homology groups $HFK^-$ and $\widehat{HFK}$. In the untwisted setting, the $E_2$ terms do not typically live in a single grading, so we do not expect the sequences to collapse. On the other hand, we do expect an interesting relationship with the HOMFLY-PT homology of Khovanov and Rozansky, as follows.

Let us discuss some aspects of the construction of the HOMFLY-PT homology from \cite{KR2}. In the original reference Khovanov and Rozansky worked with coefficients in $\qq$, but the HOMFLY-PT homology can also be constructed with $\zz$ coefficients, as shown by Krasner \cite{Krasner}. We choose a decorated braid projection $\K$ for a knot $K$, as before. Given a complete resolution $S$ of $\K$, one associates to 
$S$ a Koszul complex $\Bkr(S)$. The HOMFLY-PT chain complex is then defined as
\begin {equation}
\label {eq:krypton}
 \Ckr (\K) =  \bigoplus_{I: c(\K) \to \{0,1\}}  H_*(\Bkr(S_I(\K))),
 \end {equation}
with a differential given by summing up certain zip and unzip maps. We denote its homology by $\Hkr(K)$. This is the {\em middle HOMFLY-PT homology} of the knot $K$. If we take the homology of $\Ckr(\K)/(U_0 = 0)$ instead, we obtain another variant of HOMFLY-PT homology, called {\em reduced}, which we denote by $\bHkr(K)$. (The terminology {\em middle} and {\em reduced} was introduced by Rasmussen \cite{RasmussenD}.)

We can alternately describe the summands in \eqref{eq:krypton} as follows. For a connected complete resolution $S$, let $Q_S \subset \R$ be the ideal generated by the quadratic elements
$$ Q(p) = U_a U_b - U_c U_d, $$
for all four-valent vertices $p \in c(S)$, together with the linear elements 
 $$ Q(p) = U_e - U_f,$$ 
 for all two-valent vertices $p$ of $S$, where $e$ and $f$ denote the edges meeting at $p$. For a disconnected complete resolution $S$, on each connected component that does not contain the distinguished edge in $\K$ we pick a two-valent vertex, coming from the right hand side of a resolved crossing. We call these two-valent vertices {\em special}, and define an ideal $Q_S \subset \R$ the same way as in the connected case, except that when $p$ is special we do not include the linear element $Q(p)$ in the generator set. (This is equivalent to cutting edges open at the special points, just as we did at the gray dot in Figure~\ref{fig:fig8}.)
 
This way, we have an ideal $Q_S$ for any complete resolution $S$. Observe that $Q_S$ is contained in the ideal $N_S$ defined previously. We will prove:
 
\begin {theorem}
\label {prop:kr}
For any complete resolution $S$ of a decorated braid diagram $\K$, the homology $H_*(\Bkr(S))$ is isomorphic to 
 \begin {equation}
\label {eq:hbkr}
 \hBkr(S) := \Tor_*(\R/L_S, \R/Q_S) \otimes_{\R} \Lambda^* V_S.
 \end {equation}
\end {theorem}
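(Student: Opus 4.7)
The plan is to realize $\Bkr(S)$ explicitly as a tensor product of Koszul complexes over $\R$, and then compute its homology by a regular-sequence argument. First, I would unpack the construction of the Khovanov--Rozansky Koszul complex from \cite{KR2} (using the integral refinements of \cite{Krasner}). The definition tensors, over each four-valent vertex $p\in c(S)$, a Koszul factorization built out of the pair $(L(p),Q(p))$, and, over each two-valent vertex, a one-variable Koszul factor corresponding to $U_e-U_f$. Regrouping these factors and handling the ``special'' two-valent vertices on non-distinguished components separately, I expect to obtain an isomorphism of $\R$-complexes
$$
\Bkr(S)\;\cong\;K(L_S;\R)\otimes_\R K(Q_S;\R)\otimes_\R\wedge^*V_S,
$$
where $K(L_S;\R)$ and $K(Q_S;\R)$ denote the Koszul complexes on the generating sets of $L_S$ and $Q_S$ prescribed in the statement, and $\wedge^*V_S$ absorbs the free-module contributions from the omitted generators at the special vertices.

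The main technical step is to show that the generators of $L_S$ and those of $Q_S$ each form a regular sequence in $\R$. For $L_S$: ordering the crossings topologically (bottom-to-top along the braid), the outgoing variables at each singularization are fresh, so the linear forms $L(p)=U_a+U_b-U_c-U_d$ are linearly independent and constitute a regular sequence in the polynomial ring $\R$. For $Q_S$: proceeding crossing-by-crossing in the same topological order, I would check that each quadratic $Q(p)=U_aU_b-U_cU_d$, together with each remaining linear $U_e-U_f$ coming from non-special two-valent vertices, is a non-zero-divisor on the quotient by the previously chosen generators; the key geometric input is again that fresh outgoing variables prevent zero-divisibility at each new step.

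With regularity in hand, both $K(L_S)\to\R/L_S$ and $K(Q_S)\to\R/Q_S$ are free $\R$-resolutions. The standard double-complex argument then applies: filtering the bicomplex $K(L_S)\otimes_\R K(Q_S)$ in the $Q$-direction, the first page collapses to $K(L_S)\otimes_\R\R/Q_S$ in vertical degree zero, so the second page is concentrated in a single row and equals $\Tor^\R_*(\R/L_S,\R/Q_S)$. Hence
$$
H_*\bigl(K(L_S)\otimes_\R K(Q_S)\bigr)\;=\;\Tor^\R_*(\R/L_S,\R/Q_S).
$$
Tensoring with the free $\R$-module $\wedge^*V_S$ is exact, yielding $H_*(\Bkr(S))\cong\hBkr(S)$.

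The hardest part is the regularity of the $Q_S$-generators: while a topological-ordering argument looks intuitively clean, rigorously verifying non-zero-divisibility at each step requires careful combinatorial tracking of which variables appear in which quadratic, and is particularly delicate around two-valent vertices and on disconnected resolutions. A secondary, more bookkeeping-type, difficulty is making the Koszul identification in the first step truly canonical---confirming that the contribution of each ``special'' two-valent vertex is precisely the rank-one free factor feeding into $\wedge^*V_S$, rather than a nontrivial correction term.
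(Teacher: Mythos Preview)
Your overall architecture---rewrite $\Bkr(S)$ as a Koszul-type complex over $\R$, establish regularity, and read off $\Tor$---matches the paper's. But the step you flag at the end as ``bookkeeping'' is in fact the substantive one, and your description of it is not correct.

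First, $\Bkr(S)$ as defined in the paper (following Rasmussen) is a complex of $\R'$-modules, $\R'=\R/L_\K$, with one two-term factor $\Q_S(p)$ for each \emph{crossing} $p\in c(\K)$, not one for each vertex of $S$. A smoothed crossing contributes a single linear factor $\R'\xrightarrow{U_a-U_c}\R'$, not two factors for its two two-valent vertices. To obtain a Koszul complex over $\R$ with one generator per vertex of $S$, the paper resolves $\R'$ as $\R/L_S\otimes\Koszul\{L(p)\mid p\in c(\K)\setminus c(S)\}$ (the extra linear elements being regular), and then trades each such $L(p)$ for the missing $Q(p^r)$ via the identity $L(p)=Q(p^l)+Q(p^r)$. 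This produces the intermediate quasi-isomorphism
\[
\Bkr(S)\;\sim\;\R/L_S\otimes_\R\Koszul\{Q(p)\mid p\in v(S)\},
\]
with \emph{all} vertices present, including the special ones.

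Now the real gap. The special vertex $p_j$ on a non-distinguished component does \emph{not} contribute a ``rank-one free factor'': the element $Q(p_j)=U_e-U_f$ is nonzero in $\R$, so $\Koszul\{Q(p_j)\}$ has nonzero differential and is quasi-isomorphic to $\R/(U_e-U_f)$, not to $\R\oplus\R[1]$. What makes the extraction of $\wedge^*V_S$ possible is a linear relation that only holds \emph{after} tensoring with $\R/L_S$: on a closed component $S_j$ one has $\sum_{p\in v(S_j)\text{ two-valent}}Q(p)=-\sum_{p\in c(S_j)}L(p)$ in $\R$, so the left side vanishes modulo $L_S$. A change of basis among the Koszul generators over $\R/L_S$ then replaces $Q(p_j)$ by $0$, and it is this factor $\Koszul\{0\}$ that yields the exterior-algebra summand. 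Consequently your proposed isomorphism $\Bkr(S)\cong K(L_S)\otimes_\R K(Q_S)\otimes_\R\wedge^*V_S$ over $\R$ is false as stated; the $\wedge^*V_S$ only splits off once one has already passed to $\R/L_S$.

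A smaller point on the regularity of the $Q_S$-generators: the intuition about fresh outgoing variables is right, but a naive topological ordering does not close up. The paper (Lemma~\ref{lem:qregular}) instead proves the stronger statement that the $Q(p)$ together with the variables $U_a$ for all incoming exterior edges form a regular sequence, inducting by deleting a top vertex and tracking which edges become exterior; this strengthened hypothesis is what makes each inductive case go through.
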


The expression \eqref{eq:hbkr} is similar to that for $\hB(S)$ from \eqref{eq:hb}. In fact, we propose the following:

\begin {conjecture}
\label {conj1}
Let $K \subset S^3$ be an oriented knot, with a decorated braid projection $\K$.

$(a)$ For every complete resolution $S$ of $\K$, the $\R$-modules $\hB(S)$ and $\hBkr(S)$ are isomorphic. 

$(b)$ Further, after tensoring with $\ff$, the isomorphisms in $(a)$ commute with the differentials on the complexes $\C(\K) \otimes_{\zz} \ff$ and $\Ckr(\K) \otimes_{\zz} \ff$, where on $\C(K) \otimes_{\zz} \ff$ we use the $d_1$ differentials from the spectral sequences in Theorem~\ref{thm:os}. As a consequence, the $E_2$ page of the spectral sequence from Theorem~\ref{thm:os} (a) is isomorphic to the middle HOMFLY-PT homology $\Hkr(K) \otimes_\zz \ff$, and the $E_2$ page of the spectral sequence from Theorem~\ref{thm:os} (b) is isomorphic to the reduced HOMFLY-PT homology $\bHkr(K) \otimes_{\zz} \ff$.
\end {conjecture}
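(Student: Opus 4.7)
The plan is to first address part (a) by comparing the ideals $N_S$ and $Q_S$ modulo $L_S$, and then address part (b) by describing both $d_1$-differentials explicitly in the common Tor model from part (a) and checking that they match.

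For part (a), the key step is the ideal equality $L_S + N_S = L_S + Q_S$ in $\R$. Given any collection $W = \{p_1,\dots,p_k\}$ of vertices of $S$, the generator $N(W) = \prod_{e \in \Out(W)} U_e - \prod_{e \in \In(W)} U_e$ can be written as a telescoping sum $\sum_j M_j \cdot Q(p_j)$, where $M_j$ is a monomial in the $U$-variables on edges near $p_j$ that are internal to $W$. For 4-valent $p_j$, the balance relation $L(p_j) = U_a + U_b - U_c - U_d$ is used to rewrite intermediate products so that monomials on shared edges cancel; for 2-valent $p_j$, the linear generator $U_e - U_f$ of $Q_S$ plays the analogous role. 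To lift this degree-zero statement to an isomorphism of all $\Tor$ groups, I would use the Koszul resolution $\Koszul(L_S)$ of $\R/L_S$; both $\hB(S)$ and $\hBkr(S)$ are then computed as $H_*(\Koszul(L_S) \otimes_\R \R/J) \otimes \wedge^* V_S$ for $J = N_S$ or $Q_S$, and the surjection $\R/Q_S \to \R/N_S$ gives a chain map between the two. Combining the ideal equality with the fact that the generators of $L_S$ form a regular sequence on each connected component of $S$ (a standard check using that distinct $L(p)$'s involve disjoint or coherently-overlapping edge variables) shows that the mapping cone is acyclic, so the two Tor modules agree.

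For part (b), once both sides are identified with $\Tor_*(\R/L_S, \R/Q_S) \otimes \wedge^* V_S$, the $d_1$ on each complex is a sum of maps attached to edges of the hypercube $\{0,1\}^{c(\K)}$, each such edge going between resolutions $S_I, S_{I'}$ that differ only at one crossing $p$. On the Khovanov-Rozansky side, the zip and unzip maps attached to such an edge have a classical description in terms of Koszul complexes: they are multiplication by a difference $U_a - U_c$ (or $U_b - U_d$) of incident variables at $p$ followed by a canonical change-of-resolution map induced by the adjunction between $\R/Q_{S_I}$ and $\R/Q_{S_{I'}}$. On the Ozsv\'ath-Szab\'o side, I would extract the same data for the $d_1$ in the $t = 1$ specialization of \cite{CubeResolutions}. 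Their twisted edge maps are given by holomorphic triangle counts and involve differences of $t \cdot U$ variables; setting $t = 1$ should collapse the twisting into a multiplication-by-$(U_a - U_c)$ operation followed precisely by the change-of-ideal map attached to adding or removing the relation $L(p)$. A crossing-by-crossing comparison then exhibits the isomorphism from part (a) as a chain map, and the $E_2$ pages of Theorem~\ref{thm:os}~(a) and (b) are therefore identified with $\Hkr(K) \otimes_\zz \ff$ and $\bHkr(K) \otimes_\zz \ff$, respectively.

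The main obstacle is distilling an explicit chain-level formula for the untwisted $d_1$ of \cite{CubeResolutions} on the relevant summands. There, the differential is constructed out of holomorphic triangle counts on multi-pointed Heegaard diagrams for singular braids, and the twisted coefficients were specifically engineered to force collapse at $E_2$, not to make the edge maps manifestly algebraic. Reading off the required formula will require revisiting the Floer-theoretic arguments, tracking the action of basic handleslide and triangle maps on standard intersection-point generators, and verifying that all contributions depending genuinely on $t$ drop out when $t = 1$. I expect that the resulting formula will agree on the nose with the zip/unzip prescription, so that the algebraic comparison following the ideal-equality lemma of part (a) is then largely formal; the restriction to $\ff$ coefficients in Conjecture~\ref{conj1}~(b) removes some sign ambiguity, though an integral refinement is a natural next step.
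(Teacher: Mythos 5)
The statement you are proving is a \emph{conjecture} in this paper: the author does not prove it, and only offers partial evidence, namely Theorem~\ref{thm:2}, which establishes exactly the degree-zero part of your argument (the ideal equality $L_S+N_S=L_S+Q_S$, via the symmetric-function identity of Proposition~\ref{main}). Your proposal for part (a) breaks down precisely where it goes beyond that. The ideal equality only identifies $\Tor_0(\R/L_S,\R/N_S)$ with $\Tor_0(\R/L_S,\R/Q_S)$; it does \emph{not} imply that the chain map $\Koszul\{L(p)\}\otimes\R/Q_S \to \Koszul\{L(p)\}\otimes\R/N_S$ induced by the surjection $\R/Q_S\to\R/N_S$ is a quasi-isomorphism, so your claim that ``the mapping cone is acyclic'' is unsupported and in fact false. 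The paper exhibits a concrete counterexample (the one-crossing graph of Figure~\ref{fig:ox}): there both $\Tor_1$ groups are isomorphic to $\R/(U_1-U_2)$, but the natural map $f_1$ is multiplication by $\pm U_3$, hence not an isomorphism. This is also forced by the grading bookkeeping: the conjectured isomorphism must shift $\gr_q$ by $2i$ in homological degree $i$ (Conjecture~\ref{conj3}), whereas the natural comparison map is degree-preserving, so whatever isomorphism exists in degree $i>0$ cannot be the obvious one. Your argument supplies no candidate for it.

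Part (b) is likewise not established. You correctly identify the obstacle---an explicit chain-level formula for the untwisted $d_1$---but then only assert that you ``expect'' it to agree with the zip/unzip maps. The paper's remark at the end of Section~\ref{sec:sps} states that this identification is open and would require a careful analysis of the generators of $HFL^-(S)$ for \emph{disconnected} resolutions $S$ (which contribute nontrivially in the untwisted setting, unlike the twisted one where they vanish). So the proposal proves neither (a) nor (b); what it does prove correctly (the ideal equality) coincides with the paper's Theorem~\ref{thm:2}, which the author presents only as evidence for, not a proof of, this conjecture.
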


Let us put this conjecture into context. A relationship between the HOMFLY-PT and knot Floer homology was first proposed by Dunfield, Gukov, and Rasmussen in \cite{DGR}, where they suggested the existence of a differential $d_0$ on $\bHkr(K)$, such that the homology with respect to $d_0$ gives $\widehat{HFK}(K)$. In light of Rasmussen's work in \cite{RasmussenD}, it became more natural to expect a spectral sequence from $\bHkr(K)$ to $\widehat{HFK}(K)$. Conjecture~\ref{conj2} (b), together with Theorem~\ref{thm:os}, would provide such a spectral sequence, at least with $\ff$ coefficients. Its existence would show that the total rank of the (reduced) HOMFLY-PT homology is at least as big as that of knot Floer homology. In turn, this would give a new proof of the fact that HOMFLY-PT homology detects the unknot. Currently, this last fact is known due to the work of Kronheimer and Mrowka \cite{KMsutured, KMunknot}, combined with that of Rasmussen \cite[Theorem 2]{RasmussenD}. Moreover, the existence of the spectral sequence (with the expected behavior with respect to gradings) would go beyond unknot detection: for example, it would show that HOMFLY-PT homology detects the two trefoils and the figure-eight knot, by using the corresponding result in knot Floer homology \cite{Ghiggini}.

The current paper reduces the Dunfield-Gukov-Rasmussen conjecture to a statement in terms of Tor groups, Conjecture~\ref{conj1}, of which part (a) has a purely algebraic flavor involving only ideals associated to graphs in the plane. This makes part (a) amenable to techniques from commutative algebra. Further, it is natural to expect that any solution to part (a) would produce isomorphisms that behave well with respect to the differentials, hence proving part (b).  

Thus, let us focus on part (a) of Conjecture~\ref{conj1}. Given how $\hB(S)$ and $\hBkr(S)$ are described in \eqref{eq:hb} and \eqref{eq:hbkr}, this part boils down to an isomorphism between $\Tor$ groups. A natural strategy of attacking Conjecture~\ref{conj1} (a) would be to cut the braid into simpler pieces and use an inductive argument. Although we have not succeeded in implementing this strategy, it is hopeful that the following extension of Conjecture~\ref{conj1} (a) seems to hold. 

Define a {\em partial braid graph} $S$ to be a part of a complete resolution $S'$ of a decorated braid projection. Precisely, let $W'$ be the set of crossings of $S'$, and view $S'$ as a union of neighborhoods $U_p$ of each $p\in W'$, such that $U_p$ consists of two segments intersecting at $p$. Then, at each $p$, do one of the following:

- keep $U_p$ as it is;

- delete one of the two segments in $U_p$, and either keep $p$ as a vertex, or erase it;

- delete both of the segments in $U_p$, together with $p$.

The result, $S$, is what we call a partial braid graph. It consists of a set of vertices $W \subseteq W'$, together with a set of edges. In $S$, an edge $e_i$ may have only one endpoint at a vertex in $W$; if so, we say that $e_i$ is an {\em exterior edge}, and do not consider its other endpoint to be a true  vertex of $S$. In particular, the original distinguished edge is  split into two exterior edges. With these conventions, we can define ideals $L_S, N_S,$ and $Q_S$ just as before. For simplicity in defining $Q_S$, let us assume that $S$ is connected. We also assume that $S$ contains at least one (hence at least two) exterior edges. (Note that Conjecture~\ref{conj1} (a) for general complete resolutions $S$ would follow from the case of connected $S$ with the distinguished edge cut open. Thus, it is natural to make a similar assumption on partial braid graphs.)

\begin {conjecture}
\label {conj2}
If $S$ is a connected partial braid graph with at least one exterior edge, then for any $i \geq 0$ we have an isomorphism of $\R$-modules
$$ \Tor_i(\R/L_S, \R/N_S) \cong \Tor_i(\R/L_S, \R/Q_S).$$
\end {conjecture}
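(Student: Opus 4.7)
The plan is a two-stage attack on Conjecture~\ref{conj2}: first the $i=0$ case (equivalent to an equality of ideals), then the higher degrees via the Koszul complex of $\R/L_S$.

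\textit{Stage one.} Since $\Tor_0(\R/L_S, \R/I) = \R/(L_S+I)$, the $i=0$ statement is equivalent to the equality $L_S+N_S = L_S+Q_S$ of ideals in $\R$. As $Q_S \subseteq N_S$, it suffices to show every generator $N(W)$ of $N_S$ lies in $L_S+Q_S$, which I would do by induction on $|W|$. The base case $|W|=1$ is immediate since $N(\{p\})=Q(p)$. For $|W|\geq 2$, pick $p\in W$ and write $W'=W\setminus\{p\}$; direct expansion yields the polynomial identity
$$\Bigl(\prod_{e \in E_{W',p}} U_e\Bigr) \cdot N(W) = \Bigl(\prod_{e \in \Out(\{p\})} U_e\Bigr) N(W') + \Bigl(\prod_{e \in \In(W')} U_e\Bigr) N(\{p\}),$$
where $E_{W',p}$ is the set of edges shared between $W'$ and $p$. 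When adding $p$ to $W'$ closes no cycle, one can reshuffle the $Q$-generators into a polynomial combination whose coefficients are monomials and that yields $N(W)\in Q_S$ directly (for example, $N(W) = U_{d_q} Q(p) + U_{a_p} Q(q)$ in the series case of two $4$-valent vertices joined by one edge). When a cycle is closed, the $L$-relations must be invoked; the prototype is the ``theta'' graph with two $4$-valent vertices $p,q$ joined by two edges $e_1,e_2$, for which a direct calculation yields
$$N(\{p, q\}) = L(p)(U_{a_q} - U_{e_1}) + L(q)(U_{c_p} - U_{e_1}) + Q(p) + Q(q).$$

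\textit{Stage two.} The linear forms $\{L(p)\}_{p \in c(S)}$ are linearly independent in $\R$, hence form a regular sequence, so the Koszul complex $K_\bullet(L)$ is a free $\R$-resolution of $\R/L_S$. Thus
$$\Tor_i^{\R}(\R/L_S, M) = H_i\bigl(K_\bullet(L) \otimes_\R M\bigr),$$
and the conjecture reduces to a quasi-isomorphism between the Koszul complexes of $\{L(p)\}$ acting on $\R/N_S$ and on $\R/Q_S$. I would build this quasi-isomorphism inductively in the size of $S$: cutting the partial braid graph along a single edge splits it into two smaller partial braid graphs, and one hopes to assemble a Mayer--Vietoris-type exact triangle relating the three Koszul complexes. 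The base case of a single vertex is trivial, as $N_S=Q_S$ there.

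The main obstacle, and the technical heart of the argument, will be in handling cycles in $S$. The theta-graph computation shows that $N_S\setminus Q_S$ is nonempty precisely when some $W$ encloses a cycle, so these configurations are where the $L$-relations genuinely enter the picture. A successful argument must track, at the level of Koszul homology, how the extra $N$-generators are absorbed into the Koszul differentials coming from the $L$-relations, and verify that the resulting contributions match those on the $Q_S$ side. The most plausible route I see is a spectral sequence on the Koszul complexes whose filtration is indexed by the cycle structure of the subsets $W\subseteq c(S)$; controlling this spectral sequence when cycles are nested or intersecting, and proving convergence, looks delicate.
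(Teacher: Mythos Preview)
First, a framing point: the statement is a \emph{conjecture} in the paper, not a theorem. The paper establishes only the case $i=0$ (Theorem~\ref{thm:2}), and your Stage two is, by your own admission, a sketch with acknowledged obstacles rather than a proof. There is nothing in the paper to compare Stage two against beyond computer evidence and the vanishing bound of Lemma~\ref{lem:bound}; the higher $\Tor$ isomorphisms remain open.

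For Stage one, your approach is genuinely different from the paper's and has real gaps. The paper does not induct on $|W|$. Instead it proves directly (Proposition~\ref{main}) that $N(W)\in L+Q$ when $W$ is the full vertex set of any connected partial braid graph, via a single explicit identity~\eqref{eq:ide} expressing $N(W)$ as an $\R$-linear combination of the $L(p_i)$ and $Q(p_i)$, with coefficients built from elementary and complete homogeneous symmetric polynomials in the edge variables cut by horizontal slices of the braid (Lemmas~\ref{one} and~\ref{two}). This is uniform in the graph and makes no cycle/no-cycle distinction. Your inductive scheme, by contrast, is incomplete: the displayed identity with $E_{W',p}$ only shows that a \emph{monomial multiple} of $N(W)$ lies in $(N(W'),N(\{p\}))$, which does not yield $N(W)\in L+Q$ since $\R/(L+Q)$ is typically not a domain. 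You then assert that the no-cycle and cycle-closing cases can each be handled, but you substantiate this only with one two-vertex example per case, not a general mechanism; already for three or more vertices with nested or intersecting cycles it is unclear how your reshuffling is supposed to proceed. (A minor additional point: the base case $N(\{p\})=Q(p)$ fails when $p$ carries a self-loop $e$, where instead $Q(p)=N(\{p\})\cdot U_e$.) The paper's symmetric-function identity sidesteps all of this by treating the entire vertex set at once.
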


An example of a partial braid graph is shown in Figure~\ref{fig:partial}, where
\begin {eqnarray*}
 L_S &=& (U_a + U_b - U_c - U_d, \ U_e + U_d - U_f - U_g), \\
 Q_S &=& ( U_a U_b - U_c  U_d, \ U_e U_d - U_f  U_g, \ U_g - U_b), \\
N_S &=& Q_S + (U_a U_e - U_c U_f  ). 
\end {eqnarray*}

\begin {figure}
\begin {center}
\input {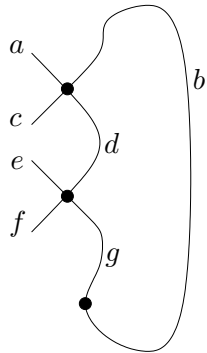}
\caption {A partial braid graph with four exterior edges and three vertices.}
\label {fig:partial}
\end {center}
\end {figure}

Evidence for Conjecture~\ref{conj2} comes from computer experimentation, and from proofs in some particular cases. For example, we have:

\begin {theorem}
\label {thm:2}
Conjecture~\ref{conj2} is true for $i=0$. In fact, for any connected partial braid graph $S$, we have $L_S + N_S = L_S + Q_S$ as ideals in $\R$. 
\end {theorem}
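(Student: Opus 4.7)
The plan is to prove both inclusions. The easy direction $L_S + Q_S \subseteq L_S + N_S$ comes from observing that $N(\{p\}) = Q(p)$ for every vertex $p$: at a four-valent $p$ both equal $U_a U_b - U_c U_d$, and at a two-valent $p$ (none of which is special, since $S$ is connected) both equal $U_e - U_f$. So $Q_S \subseteq N_S$, giving the desired containment of ideals.

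For the nontrivial direction $N_S \subseteq L_S + Q_S$, I would work in the polynomial ring $R[t]$, where $R = \R/(L_S + Q_S)$, and package the two kinds of relations into a single polynomial identity. At a four-valent vertex $p$, the generators $L(p) = U_a + U_b - U_c - U_d$ and $Q(p) = U_a U_b - U_c U_d$ record precisely the equality of the linear and constant coefficients of the monic quadratics $(t-U_a)(t-U_b)$ and $(t-U_c)(t-U_d)$, so these two quadratics are equal in $R[t]$. At a two-valent $p$, $Q(p) = U_e - U_f$ gives $(t-U_e) = (t-U_f)$ in $R[t]$. Uniformly, one has the vertex-wise identity
\[
\prod_{e \in \Outof(\{p\})} (t - U_e) \;=\; \prod_{e \in \Into(\{p\})} (t - U_e) \quad\text{in } R[t].
\]

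Given a collection $W$ of vertices, I would multiply this identity over all $p \in W$. Because every edge of $S$ has a unique tail and a unique head, the two sides assemble into $\prod_{e \in \Outof(W)}(t - U_e)$ and $\prod_{e \in \Into(W)}(t - U_e)$ respectively. The common factor over the internal edges $M = \Outof(W) \cap \Into(W)$ is monic, hence a non-zero-divisor in $R[t]$ by the standard leading-coefficient argument (valid over any commutative base), and can be cancelled to yield
\[
\prod_{e \in \Out(W)} (t - U_e) \;=\; \prod_{e \in \In(W)} (t - U_e) \quad\text{in } R[t].
\]
Since every vertex has equal in- and out-degree, $|\Out(W)| = |\In(W)|$, so comparing constant terms of these polynomials of equal degree gives $\prod_{e \in \Out(W)} U_e = \prod_{e \in \In(W)} U_e$ in $R$, which is exactly $N(W) \in L_S + Q_S$.

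The only step requiring real insight is the packaging of $L(p)$ and $Q(p)$ as the coefficient-wise equality of two monic quadratics in $R[t]$; after that the proof is routine. It is worth noting that a more naive induction, peeling a vertex off $W$ at a time, gets stuck when the induced subgraph of $S$ on $W$ contains a directed cycle (which can happen, since closure arcs of the braid make $S$ generally \emph{not} a DAG), and this is exactly the case in which $L_S$ becomes indispensable and is most cleanly invoked through the quadratic-matching device above.
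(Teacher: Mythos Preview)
Your proof is correct and takes a genuinely different route from the paper's. The paper proves the hard inclusion by writing down an explicit identity (their Proposition~\ref{main} and equation~\eqref{eq:ide}): they slice the braid diagram into horizontal strips, one per crossing, and express $N(W)$ as an explicit $\R$-linear combination of the $L(p_i)$ and $Q(p_i)$, with coefficients built from elementary and complete homogeneous symmetric polynomials in the edge variables crossing each strip. This is constructive---one can read off the coefficients---but it relies on the braid presentation and on two preparatory lemmas about symmetric functions.

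Your argument instead packages $L(p)$ and $Q(p)$ as the statement that the monic quadratics $(t-U_a)(t-U_b)$ and $(t-U_c)(t-U_d)$ agree in $(\R/(L_S+Q_S))[t]$, multiplies over $W$, and cancels the monic internal-edge factor. This is shorter, more conceptual, and makes no use of the braid structure: it works for any finite directed graph in which every vertex has equal in- and out-degree. What you lose is the explicit expression of $N(W)$ in terms of the generators; what you gain is a proof that makes the role of $L_S$ transparent (it is exactly what forces the two quadratics to match, not just their constant terms).

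One minor inaccuracy in your easy direction: it is not quite true that $N(\{p\}) = Q(p)$ at every four-valent vertex. If $p$ carries a loop $e$ (which the paper allows), then $N(\{p\}) = U_a - U_d$ while $Q(p) = U_e \cdot N(\{p\})$. The inclusion $Q_S \subseteq N_S$ still holds, of course, and the paper records exactly this caveat in Section~\ref{sec:partial}.
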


Interestingly, the isomorphism appearing in Conjecture~\ref{conj2} cannot be simply induced by the natural quotient map $\R/Q_S \to \R/N_S$ (although this is the case for $i=0$). See Section~\ref{sec:fail} for a discussion.

This paper is organized as follows. In Section~\ref{sec:sps} we review the proof of Theorem~\ref{thm:os}, focusing on the few aspects that are different in the untwisted setting. In Section~\ref{sec:kr} we prove Theorem~\ref{prop:kr}, about the HOMFLY-PT complex. In Section~\ref{sec:gr} we present and compare three gradings on the complexes $\C(\K)$ and $\Ckr(K)$. Finally, in Section~\ref{sec:partial} we discuss Conjecture~\ref{conj2} and prove Theorem~\ref{thm:2}.  

\medskip
\noindent {\bf Acknowledgements.} The author wishes to thank Brian Conrad, Mark Green, Tye Lidman, Peter Ozsv\'ath, Jacob Rasumssen, and Zolt\'an Szab\'o for several helpful conversations during the course of this work. Clearly, this paper is very much influenced by the work of Ozsv\'ath and Szab\'o \cite{CubeResolutions}, where the original twisted cube of resolutions is constructed, and its specialization to $t=1$ is suggested.

\section {The untwisted spectral sequence}
\label {sec:sps}
For completeness, in this section we sketch the construction of the spectral sequences in Theorem~\ref{thm:os}, following \cite{CubeResolutions}. The original reference used a coefficient ring of Laurent power series in a variable $t$. Here we specialize to $t=1$, and this requires us to address a few (minor) additional points. Precisely, some care needs to be taken to make sure that the sums involved in the construction remain finite when setting $t=1$; this is an admissibility issue, and is settled in Lemma~\ref{lem:adm} below. Another small discussion is  needed for disconnected resolutions---see Lemma~\ref{lem:disjoint} below. These lemmas are the new content in this section. Apart from that, the constructions are due to Ozsv\'ath and Szab\'o, and our exposition follows \cite{CubeResolutions} closely (except for a few differences in notation and terminology).

Let $\ff = \zz/2\zz$. Let $\K$ be a decorated braid projection with $n$ crossings. We denote our base ring by 
$$\RR = \R \otimes_{\zz} \ff \cong \ff[U_0, \dots, U_{2n}].$$ By a slight abuse of notation, the ideals 
$L_S$ and $N_S$ from the Introduction are denoted the same way here, even though they are implicitly tensored with $\ff$. This is the only section of the paper where we have to work with $\ff$ coefficients; we will return to $\zz$ coefficients starting in Section~\ref{sec:kr}. 

Unless otherwise noted, all the tensor products in this section are taken over $\RR$.

\subsection {Floer complexes from planar diagrams}
\label {sec:planar}
Given an assignment $I: c(\K) \to \{0,1,\infty\}$, we can define a {\em partial resolution} $S = S_I(\K)$ of the decorated  braid projection $\K$ as follows. At each $p \in c(\K)$, we take the 0-resolution if $I(p)=0,$ the 1-resolution if $I(p) = 1$, and we leave the crossing as it is if $I(p)=\infty$. We denote by $\sigma$  the number of crossings $p$ such that $S$ has a singularization (a four-valent vertex) at $p$.

There is a unified way of constructing Heegaard diagrams (and Floer chain complexes) for all the partial resolutions $S.$ Following \cite[Section 4]{CubeResolutions}, near each crossing $p \in c(\K)$ we draw a local picture as in Figure~\ref{fig:exact}. If $I(p) = \infty$ and $p$ is a positive crossing of $\K$, we place two $X$ markings at the spots $A^0$ and $A^+$ in the figure.  If $I(p) = \infty$ and $p$ is a negative crossing, we place two $X$ markings at $A^0$ and $A^-$. If $I(p) \in \{0,1\}$ and $S$ is smoothed at $p$, we place $X$ markings at the two spots indicated by $B$. If $I(p) \in \{0,1\}$ and $S$ is singularized at $p$, we delete $\alpha_1$ and $\beta_1$ from the diagram (that is, we only draw the circles $\alpha_2$ and $\beta_2$), and place two $X$ markings in the middle bigon that is the intersection of the two disks with boundaries $\alpha_2$ and $\beta_2$. In all cases, we also place an $O$ marking on each edge of the diagram; in the figure, the circles marked $a, b, c, d$ correspond to the $O$ markings on the four edges meeting at $p$. Finally, we add a point at infinity to the plane to obtain $S^2 = \rr^2 \cup \{\infty\}$, and we place an additional $X$ marking at infinity. We draw the curves in the diagram so that the point where we cut the distinguished edge (the gray dot in Figure~\ref{fig:fig8}) can be joined to infinity by a path that does not intersect any of the alpha or beta curves.

\begin {figure}
\begin {center}
\input {exact.pstex_t}
\caption {The planar diagrams at a crossing.}
\label {fig:exact}
\end {center}
\end {figure}

In the end, for each $I$ we obtain a collection of $2n-\sigma$ alpha curves and $2n-\sigma$ beta curves on the sphere, together with $2n+1$ $X$-markings and $2n+1$ $O$-markings. This is a {\em balanced Heegaard diagram} for the singular link $S \subset S^3$, in the sense of \cite{OSSsingular}. 

Let $\Ta$ (resp. $\Tb$) be the tori in the symmetric product $\Sym^{2n-\sigma}(S^2)$ gotten by taking the product of all alpha (resp. beta) curves. For $\x,\y \in \Ta \cap \Tb$, we denote by $\pi_2(\x, \y)$ the space of relative homology classes of Whitney disks from $\x$ to $\y$ with boundaries on $\Ta, \Tb.$ For $\phi \in \pi_2(\x, \y)$, we let $\mu(\phi) \in \zz$ be its Maslov index, and we let $\widehat{M}(\phi)$ be the moduli space of pseudo-holomorphic disks (flow lines) in the class $\phi$, modulo reparametrization by $\rr$. We choose orderings of the markings as $X_0, \dots,  X_{2n}$ and $O_0, \dots, O_{2n}$. We let $X_i(\phi)$ resp. $O_i(\phi)$ be the local multiplicity of (the domain of) $\phi$ at $X_i$ resp. $O_i$. Further, at each singular point (four-valent vertex) on $S$ we have two $X$-markings in the same bigon; we denote the local multiplicity of $\phi$ in that bigon by $XX_j(\phi)$, for some $j=1, \dots, \sigma$. 

One can assign to any intersection point $\x \in \Ta \cap \Tb$ an {\em Alexander grading} $A(\x) \in \zz$ and a {\em Maslov grading} $M(\x) \in \zz$. (In fact, when $S$ has multiple components, there are several Alexander gradings, but here we just consider their sum.) We refer to \cite[Section 2.3]{CubeResolutions} for the exact definitions of $A$ and $M$, but let us mention that up to a shift, the gradings are determined by the following properties: for any $\x, \y$, and $\phi \in \pi_2(\x, \y)$, 
$$ A(\x) - A(\y) = \sum_{i=0}^{2n} \bigl( X_i(\phi) - O_i (\phi)), $$
and
$$ M(\x) - M(\y) = \mu(\phi) - 2 \sum_{i=0}^{2n} O_i(\phi) + 2 \sum_{j=1}^{\sigma} XX_j(\phi).$$

The {\em Floer chain complex} $CFL^-(S) = CF^-(\Ta, \Tb)$ is defined as follows. As a module over $\RR =\ff[U_0, \dots, U_{2n}]$, it is freely generated by the intersection points $\x \in \Ta \cap \Tb.$ As such, it comes with a bigrading $(A, M)$ induced by the one on generators, where a variable $U_i$ is set to be in bigrading $(-1, -2)$. The differential on $CFL^-(S)$ is given by
\begin {equation}
\label {eq:delhfk}
\del \x = \sum_{\y \in \Ta \cap \Tb} \sum_{\{\phi\in \pi_2(\x, \y) \mid \mu(\phi) = 1; X_i(\phi) = 0, \forall i\} } 
\# \widehat{\M}(\phi) \cdot \prod_{i=0}^{2n} U_i^{O_i(\phi)} \cdot \y.
\end {equation}

In order for the differential to be well-defined, we need to make sure that the sum in \eqref{eq:delhfk} is finite. This is guaranteed to be the case if the Heegaard diagram we use is {\em admissible} in the following sense. (Compare \cite{Links}, \cite{OSSsingular}, \cite{CubeResolutions}.) A {\em periodic domain} is a two-chain on the Heegaard surface whose boundary is a $\zz$-linear combination of alpha and beta curves, and whose multiplicity at each marking $X_i$ or $O_i$ is zero. The diagram is said to be admissible if every non-trivial periodic domain has both positive and negative multiplicities somewhere on the diagram.

\begin {lemma}
\label {lem:adm}
The planar Heegaard diagram for a partial resolution $S$, as constructed above (using $n$ copies of Figure~\ref{fig:exact}) is admissible. 
\end {lemma}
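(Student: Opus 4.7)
The plan is to verify admissibility by analyzing the region structure of the planar Heegaard diagram and exploiting the constraint that any periodic domain must have zero multiplicity at every $X$- and $O$-marking. I will argue that if a periodic domain $P$ has all non-negative multiplicities, then $P$ is identically zero; replacing $P$ by $-P$ then handles the non-positive case, and the two together yield admissibility.

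The starting point is that the $X$-marking placed at infinity forces the multiplicity of $P$ in the unbounded region of $S^2$ to vanish. I then propagate this vanishing inward by a local analysis carried out crossing by crossing. At each crossing $p \in c(\K)$, the local picture falls into one of three cases according to whether $I(p) \in \{\infty, 0, 1\}$ (with the further distinction of the sign of $p$ when $I(p) = \infty$), as depicted in Figure~\ref{fig:exact}. In each case, the positions of the $O$-markings in the four corner regions labeled $a, b, c, d$, together with the designated $X$-markings (at $A^0$ and $A^\pm$, or at the two $B$ spots, or in the middle bigon of $\alpha_2 \cap \beta_2$ for a singularized crossing) pin down the multiplicities of $P$ in the local piece: every bounded local region containing a marking must already have multiplicity zero. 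Using the non-negativity of $P$ and the fact that multiplicities of regions adjacent across an $\alpha$- or $\beta$-arc differ by the corresponding boundary coefficient, I would conclude that the remaining local multiplicities also vanish. Because adjacent crossing pieces are glued along common edges, each of which carries an $O$-marking that has already been shown to have vanishing multiplicity, this vanishing propagates across the entire connected planar diagram, forcing $P \equiv 0$.

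The main obstacle I anticipate is verifying the local vanishing claim uniformly in the singularized case. There $\alpha_1$ and $\beta_1$ have been deleted, so the partition of the local piece into regions is coarser, and the only $X$-markings available lie in the central bigon of $\alpha_2 \cap \beta_2$. One must then check, solely using these central $X$-markings together with the four incident $O$-markings at $a, b, c, d$ and the non-negativity assumption, that all surrounding multiplicities are forced to vanish. Once this is established for each of the three local models, the global admissibility claim follows immediately by patching the local pieces along the $O$-marked edges and using connectedness of the planar diagram on $S^2$.
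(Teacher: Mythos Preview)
Your proposal outlines a plausible strategy but stops short of being a proof: the key local vanishing claim is asserted (``I would conclude that the remaining local multiplicities also vanish'') rather than demonstrated, and you explicitly flag the singularized case as an un-established ``obstacle''. More seriously, the propagation step as written is circular: the $O$-marked regions along shared edges have multiplicity zero \emph{by the definition} of a periodic domain, so invoking them to ``propagate'' vanishing to adjacent crossing pieces adds nothing. The substantive work lies entirely in the unmarked regions, and your sketch does not explain how non-negativity pins those down. Note that the diagram genuinely carries nontrivial periodic domains whenever $S$ is disconnected, so any correct argument must use non-negativity in an essential way; merely tracking boundary-coefficient jumps outward from marked regions cannot suffice, since if that alone forced everything to zero it would (falsely) show that no nontrivial periodic domains exist at all.

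The paper's proof is quite different and more direct. Rather than arguing by contradiction from non-negativity, it explicitly writes down a basis $\{\pi_1,\dots,\pi_\ell\}$ for the space of periodic domains, one $\pi_i$ for each connected component $S_i$ of $S$ not containing the distinguished edge, namely $\pi_i=\sum_{e\subset S_i}\bigl(\pi_+(e)-\pi_-(e)\bigr)$, where $\pi_\pm(e)$ are the obvious $\alpha$- or $\beta$-bounded disks/annuli containing the $O$-marking on $e$. For each edge $e\in S_i$ it then exhibits two specific points $q_e^{\pm}$ near that $O$-marking at which $\pi_i$ has multiplicity $\pm 1$ and every other $\pi_j$ vanishes; hence any nontrivial linear combination has both signs at some $q_e^{\pm}$. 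This is short and, as a bonus, yields the full structure of the periodic-domain group. If you wish to complete your approach, you will effectively need to locate the same pairs of regions $q_e^{\pm}$ and invoke non-negativity there---at which point you have essentially reconstructed the paper's argument.
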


\begin {proof}
The argument is different from the one in \cite[Lemma 3.3]{CubeResolutions}, where the use of extra markings (to define twisted coefficients) made admissibility more transparent.

Let $p \in c(\K)$ be a crossing. We denote by $\pi_+(b)$ and $\pi_-(c)$ the disks (ovals) with boundaries $\alpha_1$ resp. $\beta_1$ in Figure~\ref{fig:exact}, containing the small circles marked $b$ resp. $c$. We also denote by $\pi_+(a)$  the annulus bounded by $\alpha_1$ and $\alpha_2$ in Figure~\ref{fig:exact}, containing $a$. Similarly $\pi_-(d)$ is the annulus bounded by $\beta_1$ and $\beta_2$ and containing $d$.

Let $S_0, S_1, \dots, S_\ell$ be the connected components of $S$, where $S_0$ contains the distinguished edge $e_0$. For each $i=1, \dots, \ell$, we have a periodic domain 
$$ \pi_i = \sum_{e \subseteq S_i} \bigl( \pi_+(e) - \pi_-(e) \bigr).$$
The set $\{\pi_i | \ i=1, \dots, \ell \}$ forms a basis for the space of periodic domains. On each edge $e \in S_i$, the multiplicity of $\pi_i$ at the corresponding $O$ marking is zero; however, near that marking there exist points $q_e^+$ resp. $q_e^-$ where the multiplicities of $\pi_i$ are $+1$ resp. $-1$, and the multiplicities of all other $\pi_j \ (j\neq i)$ are zero. (An example is shown in Figure~\ref{fig:periodic}.) It follows that any non-trivial linear combination of the $\pi_i$'s has some positive and some negative multiplicities. 
\end {proof}

\begin {figure}
\begin {center}
\input {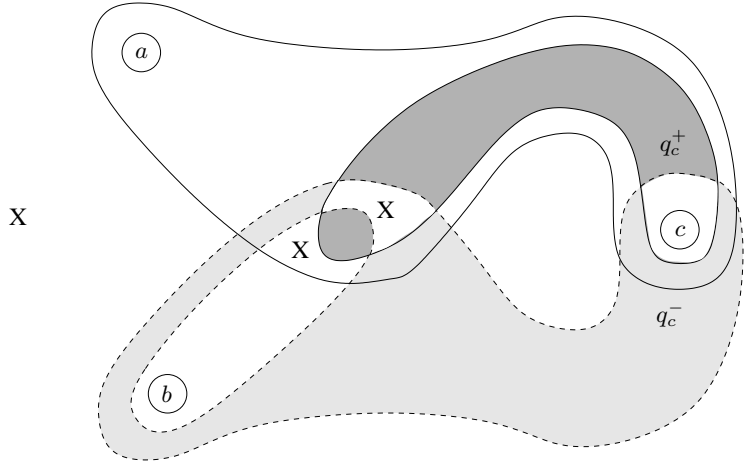}
\caption {This is a planar Heegaard diagram for an unlink $S$, obtained as follows: start with a planar projection of the unknot having a single crossing, smooth that crossing, and apply the procedure in Section~\ref{sec:planar}. In the diagram, we show the periodic domain $\pi_1 = \pi_+(c) - \pi_-(c)$ by indicating multiplicity $+1$ by darker shading, and multiplicity $-1$ by lighter shading.}
\label {fig:periodic}
\end {center}
\end {figure}

The homology of the chain complex $CFL^-(S)$ splits as
$$ HFL^-(S) = \bigoplus_{s, d \in \zz} HFL^-_d(S, s),$$
where $s$ corresponds to the Alexander grading and $d$ to the Maslov grading. Another variant of the Floer complex, $\widehat{CFL}(S)$ is gotten by choosing edges $e_{j_i}$, one  on each connected component $S_i$ ($i=0, \dots, \ell$), and setting the corresponding variables $U_{j_i}$ to zero in \eqref{eq:delhfk}. The resulting homology is denoted $\widehat{HFL}(S) = \bigoplus_{s, d} \widehat{HFL}_d(S, s).$

The Euler characteristics of $HFL^-(S)$ and $\widehat{HFL}(S)$ are related to the symmetrized Alexander polynomial of the singular link $S$. In particular, when $S = K$ is the original knot, $HFL^-(K)$ and $\widehat{HFL}(K)$ coincide with the knot Floer homologies $HFK^-(K)$ and $\widehat{HFK}(K)$, respectively, for which we have
$$ \sum_{d, s} (-1)^d \ T^s \cdot \dim \bigl( HFK^-_d(K,s) \bigr) = (1-T)^{-1} \cdot \Delta_K(T)$$
and
$$ \sum_{d, s} (-1)^d\  T^s \cdot \dim \bigl( \widehat{HFK}_d(K,s) \bigr) = \Delta_K(T),$$
where $\Delta_K(T)$ is the Alexander-Conway polynomial of $K$.

It is sometimes helpful to consider the {\em algebraic grading} $\Malg: \Ta \cap \Tb \to \zz$ on the complex $CFL^-(S)$, defined as $\Malg = M-2A$. The algebraic grading (which was denoted $N$ in \cite{CubeResolutions}) behaves like the Maslov grading in that it is decreased by one by the differential; however, it has the advantage that it is preserved by multiplication by any $U_i$.

\subsection{Insertions and connected components} 
\label {sec:insert}
Let $S$ be a partial resolution of a decorated braid projection, as in the previous subsection. Suppose we introduce a few extra two-valent vertices along the edges of the projection, which we call {\em insertions}. Let us write $S'$ for $S$ with the insertions. We then have the following variant of the planar Heegaard diagram from Subsection~\ref{sec:planar}.  Near each insertion, we introduce a new $O$ marking, a new $X$ marking, a new $\alpha$ curve, and a new $\beta$ curve, as in Figure~\ref{fig:insert}. 

\begin {figure}
\begin {center}
\input {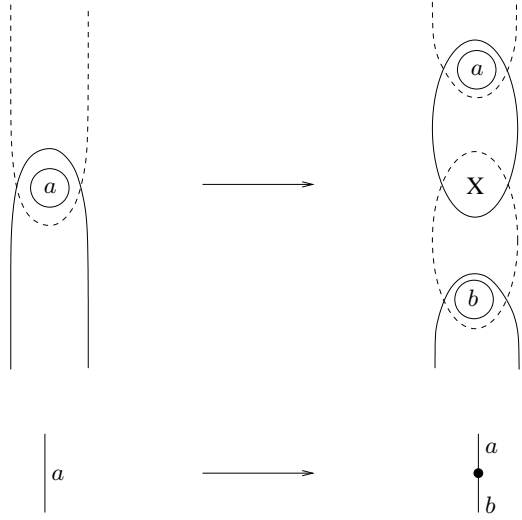}
\caption {An insertion along an edge changes the planar Heegaard diagram as shown here.}
\label {fig:insert}
\end {center}
\end {figure}

Consider the polynomial ring $\RR'$, with one $U$ variable for each $O$ marking in the new picture. 
We can define a Floer chain complex $CFL^-(S')$ over the ring $\RR'$, by the same recipe as in Subsection~\ref{sec:planar}. If $r$ is the number of insertions, the complex is constructed from tori in the symmetric product $\Sym^{2n-\sigma+r}(S^2).$ Lemma~\ref{lem:adm} easily extends to this situation.

It is worth noting that, if we view the original $CFL^-(S)$ as a complex over $\RR'$ by letting each new $U$ variable act the same way as the old variable from the edge where the insertion was done, then $CFL^-(S)$ and $CFL^-(S')$ are quasi-isomorphic over $\RR'$; see \cite[Proposition 2.3]{MOS}. 

Another useful observation is that if we consider a crossing $p$ in $\K$ that is smoothed in $S$, we can view the two resulting two-valent vertices as insertions. If we do so, the resulting Floer complex is quasi-isomorphic to the original one, in which we used the local picture in Figure~\ref{fig:exact} near $p$ (with $X$ markings at $B$). Indeed, if we handleslide $\alpha_2$ over $\alpha_1$ and $\beta_2$ over $\beta_1$ in that picture, and then do a small isotopy to separate $\alpha_1$ from $\beta_1$, we obtain exactly the Heegaard diagram for $S$ where the two-valent vertices are viewed as insertions. 

Our particular motivation for considering insertions is that, if we have a partial resolution $S$ of a decorated braid projection as in Subsection~\ref{sec:planar}, then each of its connected components can be viewed as a partial resolution of a smaller braid, with insertions. Indeed, let $S_0, \dots, S_\ell$ be the connected components of $S$, such that $S_0$ has the distinguished edge. (Here, we mean the connected components of the diagram $S$ viewed as a planar projection, rather than components of the underlying singular links. For example, the unresolved projection of a non-split link has a single component.) Any time we have a crossing in $\K$ that ends up smoothed in $S$, such that the two edges in the smoothing belong to different components $S_i$, then there are two resulting two-valent vertices, which we view as insertions. Further, for each component $S_i$ with $i > 0$, we pick one of the insertions on $S_i$ and declare it to be a gray dot as in Figure~\ref{fig:fig8} (that is, we cut the edge open at that point). With these conventions, each $S_i$ is a partial resolution of a smaller decorated braid projection, with some insertions. Thus, we can construct a Floer chain complex $CFL^-(S_i)$, over a polynomial ring $\RR_i$. We have:
$$ \RR \cong \RR_0 \otimes_\ff \cdots \otimes_\ff \RR_\ell.$$

The Floer chain complexes for the connected components are related to the original Floer complex for $S$ as follows:
 
\begin {lemma}
\label {lem:disjoint}
Let $S$ be a partial resolution of a decorated braid projection, with connected components $S_0, \dots, S_\ell$. Then, we have a quasi-isomorphism of $\RR$-modules:
$$ CFL^-(S) \sim CFL^-(S_0) \otimes_\ff  \cdots \otimes_\ff CFL^-(S_\ell) \otimes_\ff H_*(T^{\ell}).$$
Here, the right hand side is viewed as an $\RR$-module by combining the $\RR_i$-module structures on the first $\ell+1$ factors, and the $\ff$-vector space structure on the torus homology factor.  
\end {lemma}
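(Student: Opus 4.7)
My plan is to decompose the Floer complex of $S$ along the connected components $S_0, \dots, S_\ell$ by first separating the Heegaard diagram into geometrically disjoint mini-diagrams, then applying a Künneth-type splitting, and finally comparing each mini-diagram with the canonical Heegaard diagram of $S_i$ to account for the $H_*(T^{\ell-1})$ correction.

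First I would apply the handleslide-and-isotopy observation from the end of Subsection~\ref{sec:insert}: for each crossing $p \in c(\K)$ smoothed in $S$ whose two outgoing edges lie in different components of $S$, I replace the local picture of Figure~\ref{fig:exact} with the pair of insertion pictures of Figure~\ref{fig:insert}. The resulting Heegaard diagram $\hh'$ has $CFL^-(\hh') \sim CFL^-(S)$ over the enlarged polynomial ring $\RR'$ in which each new insertion $U$-variable is identified with the old edge variable on which the insertion was placed. After this operation, every $\alpha$- and $\beta$-circle lies entirely within one component $S_i$, so a planar isotopy places the mini-diagrams $\hh'_0, \dots, \hh'_\ell$ into pairwise disjoint disks of $S^2$. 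Admissibility is preserved: the basis of periodic domains $\{\pi_i\}$ constructed in Lemma~\ref{lem:adm}, together with the sign-witnessing points $q_e^\pm$, is local to each component and survives the isotopy.

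Second, I would establish a Künneth decomposition of $CFL^-(\hh')$. Geometric disjointness of the mini-diagrams yields $\Ta \cap \Tb \cong \prod_i (\Ta^{(i)} \cap \Tb^{(i)})$ as a subset of the symmetric product; every class $\phi \in \pi_2(\x,\y)$ decomposes as $\phi = \phi_0 + \cdots + \phi_\ell$ with $\phi_i$ supported in $\hh'_i$; and for a product complex structure the moduli spaces factor as $\widehat{\M}(\phi) \cong \prod_i \widehat{\M}(\phi_i)$. Thus $CFL^-(\hh') \sim \bigotimes_{i=0}^\ell CFL^-(\hh'_i)$ as $\RR$-modules.

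Finally, I would identify each mini-complex with $CFL^-(S_i)$ up to a correction. For $i=0$ this is direct: $\hh'_0$ is the canonical Heegaard diagram of $S_0$ viewed as a partial resolution of a smaller braid with insertions. For $i \geq 1$, by construction $\hh'_i$ uses one of its insertions in place of the gray-dot subdivision, and does not carry a separate point-at-infinity $X$-marking (the global one lies in $\hh'_0$). A local comparison at the chosen insertion-cut --- destabilizing the extra $\alpha\beta$-pair contributed by the insertion and supplying the missing $X$-marking --- shows that $CFL^-(\hh'_i) \sim CFL^-(S_i) \otimes_\ff H_*(T^1)$ for each $i \geq 1$, except that one such tensor factor is absorbed by the shared global $X$ at infinity. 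The remaining $\ell - 1$ factors of $H_*(T^1)$ combine via Künneth into the claimed $H_*(T^{\ell-1})$. The main obstacle is precisely this final bookkeeping: verifying that the correction factor is $H_*(T^{\ell-1})$ rather than $H_*(T^\ell)$ or $H_*(T^{\ell-2})$, which requires careful tracking of how the single point-at-infinity $X$-marking of $S^2$ is shared among the $\ell$ chosen insertion-cuts in the non-distinguished components.
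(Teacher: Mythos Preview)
Your approach is genuinely different from the paper's. The paper does not attempt a direct geometric splitting of the Heegaard diagram at all; instead it argues by induction on $\ell$, using two cited results from \cite{Links}: first the computation $CFL^-(S_0 \amalg \U) \sim CFL^-(S_0)[U_{\new}] \otimes_\ff H_*(S^1)$ for $\U$ an unknot, and second the connected sum formula $CFL^-(A \# B) \sim \bigl(CFL^-(A) \otimes_\ff CFL^-(B)\bigr)/(U_{\new}=U_{\old})$, extended to singular links. Viewing $S_0 \amalg S_1$ as $(S_0 \amalg \U) \# S_1$ then yields the $\ell=1$ case immediately, and the general case follows. This buys the paper a very short proof that relies only on established black boxes.

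Your route is plausible in outline but has two real gaps. In Step~2, the phrase ``for a product complex structure the moduli spaces factor as $\widehat{\M}(\phi) \cong \prod_i \widehat{\M}(\phi_i)$'' is not a correct justification: there is no product complex structure on $\Sym^{2n-\sigma}(S^2)$ coming from the mini-diagrams, and moduli spaces of disks in a symmetric product do not factor this way in general. What you actually need is that for index-one positive domains avoiding the $X$ at infinity, the decomposition $\phi=\sum_i \phi_i$ forces all but one $\phi_i$ to be trivial (an additivity-of-Maslov-index argument), so the differential is a sum of tensor-factor differentials; alternatively one can invoke Lipshitz's cylindrical setup and the maximum principle. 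Either works here because the surface has genus zero, but it must be said. In Step~3, the claim that ``one such tensor factor is absorbed by the shared global $X$ at infinity'' is exactly the delicate point, and your sketch does not resolve it: once you close each $\hh'_i$ to its own sphere you have introduced $\ell$ new $X$-marked regions at infinity, and the bookkeeping comparing $\hh'_i$ (with the chosen insertion still present) to the canonical diagram for $S_i$ (with that insertion cut open and its region joined to infinity) is precisely the content of the unknot-stabilization computation the paper cites. Without carrying this out you cannot pin down the exponent on the torus factor, and indeed the paper's own proof shows the case $\ell=1$ already produces one $H_*(S^1)$, which should make you suspicious of the exponent $\ell-1$ in the statement.
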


\begin {proof}
Consider the case $\ell=1$, when the claim is that
\begin {equation}
\label {eq:disjoint}
 CFL^-(S_0 \amalg S_1) \sim CFL^-(S_0) \otimes_\ff CFL^-(S_1) \otimes_\ff H_*(S^1),
 \end {equation}

First, note that this claim is true when $S_1 = \U$ is the unknot, obtained as the braid closure of a single strand, with a gray dot and no other insertions. (An example is that in Figure~\ref{fig:periodic}, where $S_0$ is also an unknot.) Indeed, in that case the Heegaard diagram for $S_0 \amalg \U$ can be transformed by Heegaard moves so that it is obtained from the diagram for $S_0$ by adding an $\alpha$ and a $\beta$ curve, isotopic to each other and intersecting at two points, and bounding disks that contain two new basepoints (one of type $O$ and one of type $X$). An adaptation of the arguments in \cite[proof of Lemma 6.1]{Links} shows that
\begin {equation}
\label {eq:disju}
 CFL^-(S_0 \amalg \U) \sim CFL^-(S_0)[U_{\new}] \otimes_\ff H_*(S^1),
\end {equation} 
where $U_{\new}$ is the new $U$ variable corresponding to the $O$ basepoint on $\U$.

Moving to the proof of \eqref{eq:disjoint} for arbitrary $S_1$, note that the disjoint union $S_0 \amalg S_1$ can be viewed as a connected sum of  $S_0 \amalg \U$ and $S_1$, via a path connecting $S_1$ to the unknot $\U$. In \cite[Theorem 11.1]{Links}, Ozsv\'ath and Szab\'o proved a connected sum formula for link Floer complexes (for ordinary, smooth links). Their arguments extend to singular links, giving
$$ CFL^-(S_0 \amalg S_1) \sim \bigl( CFL^-(S_0 \amalg \U) \otimes_\ff CFL^-(S_1) \bigr) / (U_{\new} = U_{\old}),$$
where $U_{\old}$ is any variable corresponding to a basepoint on the component of $S_1$ joined to $\U$. Combining this with \eqref{eq:disju}, the proof is completed for $\ell=1$. The case of general $\ell$ follows by induction.
\end {proof}

\subsection {The exact triangle} 
The key ingredient in the construction of the spectral sequence from Theorem~\ref{thm:os} (a) is to establish exact triangles between the Floer complexes associated to various partial resolutions.

Precisely, consider three partial resolutions $S, \Z, \X$ that differ at a single crossing $p \in c(\K)$, where $p$ is unchanged from $\K$ in $S$  (that is, the corresponding assignment $I$ for $S$ takes $p$ to $\infty$), $\Z$ is the smoothing of $S$ at $p$, and $\X$ is the singularization. The knot Floer homologies of $S, \Z, \X$ are related by an exact triangle:

\begin {theorem}[Ozsv\'ath-Szab\'o, Corollary 4.2 in \cite{CubeResolutions}] 
\label {thm:exact}
With $S, \Z, \X$ be as above, let $a, b, c, d$ be the edges meeting at $p$ as in Figure~\ref{fig:res}, and $\L(p)$ the two-step complex
\begin {equation}
\label {eq:lp}
 \L(p) = \Bigl( \RR \xrightarrow{U_a + U_b - U_c - U_d} \RR \Bigr).
 \end {equation}
  
Then, if the crossing $p$ is positive in $S$, we have a long exact sequence:
\begin {equation}
\label {eq:tri+}
 \dots \longrightarrow HFL^-(S)  \longrightarrow H_*(CFL^-(\X) \otimes \L(p))   \longrightarrow HFL^-(\Z)  \longrightarrow \dots 
 \end {equation}

If $p$ is a negative crossing in $S$, we have a long exact sequence:
\begin {equation}
\label {eq:tri-}
 \dots \longrightarrow HFL^-(S)  \longrightarrow HFL^-(\Z)  \longrightarrow H_*(CFL^-(\X) \otimes \L(p))  \longrightarrow \dots
 \end {equation}
\end {theorem}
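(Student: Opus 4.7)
My plan is to follow the proof structure of Corollary 4.2 in \cite{CubeResolutions}, pointing out the few adjustments needed for the untwisted ($t=1$) specialization. The three partial resolutions $S$, $\Z$, $\X$ share all structure away from the crossing $p$; near $p$ their planar Heegaard diagrams differ only in the placement of $X$ markings (for $S$ versus $\Z$) or by the further deletion of the curves $\alpha_1, \beta_1$ (for $\X$). One therefore obtains a natural triple Heegaard diagram in which the alpha and beta curves for all three resolutions coexist, differing from each other locally by handleslides and small isotopies. As in \cite{CubeResolutions}, I would use this triple diagram to produce the relevant chain maps by counting pseudo-holomorphic triangles.

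The heart of the argument is an iterated mapping cone description. Counting triangles gives a chain map $f \colon CFL^-(S) \to CFL^-(\Z)$ (for a positive crossing) whose cone is quasi-isomorphic to $CFL^-(\X) \otimes \L(p)$, and similarly a chain map $CFL^-(\Z) \to CFL^-(S)$ for a negative crossing. To identify the multiplication factor $U_a + U_b - U_c - U_d$ appearing in $\L(p)$, one examines the local contributions at $p$: on the diagram for $\X$, the two stacked $X$ markings lie in the bigon between $\alpha_2$ and $\beta_2$, with the four basepoints $a, b, c, d$ sitting in the adjacent regions. A local computation shows that the difference between the triangle-counting chain map associated to one positioning of the $X$ markings and another is realized by multiplication by $U_a + U_b$ on one side and $U_c + U_d$ on the other, producing the map $L(p) = U_a + U_b - U_c - U_d$ in the cone. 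This is precisely the $t=1$ specialization of the map in the Ozsv\'ath--Szab\'o twisted exact triangle, where the variable $t$ tracked an Alexander-grading asymmetry that here collapses to an equality.

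Two technical issues must be addressed that are absent from the original twisted treatment. First, the triangle counts must be finite once the coefficient $t$ is set to $1$; I would establish this by extending Lemma~\ref{lem:adm} to the triple diagram, using the same basis of periodic domains $\pi_i$ indexed by the connected components of the partial resolutions, which continues to exhibit both positive and negative local multiplicities in every non-trivial linear combination. Second, partial resolutions can be disconnected, and one wants the triangle to reduce component-by-component; here I would invoke Lemma~\ref{lem:disjoint} to write the Floer complex of the disconnected resolution as a tensor product of the pieces with a torus-homology factor. Since the crossing $p$ lies in a single connected component, the other components are common to $S$, $\Z$, $\X$, and so the $H_*(T^{\ell-1})$ factor is shared by all three terms of the triangle and pulls out of the exact sequence.

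The main obstacle I expect is the bookkeeping for the local triangle counts at the singular point: checking that the cone map is exactly $L(p)$, rather than a more complicated combination of $U$-variables, requires a careful holomorphic-triangle calculation in the local model, including the verification that triangles in the small regions near $p$ contribute with the anticipated $U$-powers and with appropriate algebraic multiplicities modulo $2$. The admissibility and disconnected-components issues are genuine but are straightforward extensions of the arguments already given. The two sign conventions in the statement (positive versus negative crossing) come directly from the convention in Figure~\ref{fig:res} exchanging the roles of the $0$- and $1$-resolutions; accordingly the positions of the three terms in the long exact sequence rotate, but the cone description and the identification of the connecting map with $L(p)$ are unchanged.
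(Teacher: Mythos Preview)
Your proposal diverges substantially from the paper's argument, and the divergence reflects a genuine misconception about how the exact triangle is proved here.

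The paper (following \cite{CubeResolutions}) does \emph{not} use a triple Heegaard diagram or holomorphic triangle counts. The three partial resolutions $S,\Z,\X$ are described by planar diagrams that have the \emph{same} alpha and beta curves (except that $\alpha_1,\beta_1$ are deleted for $\X$) and differ only in the placement of $X$-markings. Because of this, there is no natural triple diagram with three sets of curves related by handleslides, and a triangle-counting framework is not the right one. Instead, the argument stays inside the single Heegaard diagram for $S$: one observes that the generators of $CFL^-(S)$ containing the intersection point $x$ (or $x'$, for a positive crossing) span a subcomplex $\XX(p)$ canonically isomorphic to $CFL^-(\X)$, with quotient $\YY(p)$. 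One then assembles a doubly-filtered square whose maps $\Phi_B,\Phi_{A^\pm},\Phi_{A^\pm B}$ are counts of pseudo-holomorphic \emph{disks} in the diagram for $S$, filtered by their local multiplicities at the markings $A^0,A^\pm,B$. The map $U_a+U_b-U_c-U_d$ arises as the count of \emph{boundary degenerations} passing through exactly one $A$-marking and one $B$-marking; their domains are the ovals $\pi_+(a),\pi_+(b),\pi_-(c),\pi_-(d)$. The total complex of the square is quasi-isomorphic to $CFL^-(S)$ (via contracting the identity edge), and the horizontal filtration exhibits it as an extension of $CFL^-(\Z)$ by $CFL^-(\X)\otimes\L(p)$, whence the long exact sequence.

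Your invocation of Lemma~\ref{lem:disjoint} is also unnecessary: the argument above works uniformly for any partial resolution $S$, connected or not, since it is purely local to the crossing $p$ in a fixed Heegaard diagram. Admissibility is already handled by Lemma~\ref{lem:adm} for the single diagram for $S$; no triple-diagram admissibility is needed. In short, replace the triangle-map picture with the subcomplex/quotient analysis of $CFL^-(S)$ driven by marking multiplicities, and identify the $L(p)$ term via boundary degenerations rather than triangle asymmetries.
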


\begin {proof}[Sketch of proof]
Suppose $p$ is negative, so that the planar diagram for $S$ uses the basepoints $A^0$ and $A^-$ in Figure~\ref{fig:exact}. There is a subcomplex $\XX(p) \subset CFL^-(S)$ generated by those configurations that contain the point $x$ in the figure. We denote by $\YY(p)$ the associated quotient complex. Observe that $\XX(p)$ is (canonically) isomorphic to $CFL^-(\X)$, via the map that deletes $x$ from a generator. 

There is a doubly-filtered complex
 \begin {equation}
 \label {eq:square-}
 \xymatrixcolsep{5.5pc} 
\xymatrixrowsep{3.5pc}
\xymatrix{
 \XX(p) \ar[d]_{\Phi_{A^-}} \ar[r]^{\operatorname{id}} \ar[dr]^{\Phi_{A^-B}} & \XX(p) \ar[d]^{U_a + U_b - U_c - U_d} \\
 \YY(p) \ar[r]_{\Phi_B}  & \XX(p),
 }
\end {equation}
where: $\Phi_B$ is the part of the differential on $CFL^-(S)$ that counts holomorphic disks (flow lines) through exactly one of the two points marked $B$ in Figure~\ref{fig:exact} (that is, the domain of a disk should have multiplicity one at a $B$ point, and zero at the other $B$); $\Phi_{A^-}$ counts flow lines through exactly one of $A^0$ and $A^-$; and $\Phi_{A^-B}$ counts flow lines having total multiplicity one at $A^0$ and $A^-$, and also total multiplicity one at the two $B$'s. The term $U_a + U_b - U_c - U_d$ makes an appearance as the count of boundary degenerations in Figure~\ref{fig:exact} through exactly one of $A^0$ and $A^-$, and exactly one of the two $B$'s. Using the notation from the proof of Lemma~\ref{lem:adm}, the domains of these boundary degenerations are $\pi_+(a), \pi_+(b), \pi_-(c)$ and $\pi_-(d)$.

The total complex in \eqref{eq:square-} is quasi-isomorphic to its bottom row, which is $CFL^-(S)$. (The quasi-isomorphism is given by the canonical projection.) If we consider the horizontal filtration on \eqref{eq:square-}, we find a subcomplex (the right column) given by $CFL^-(\X) \otimes \L(p)$, and a quotient complex (the left column) which is $CFL^-(\Z)$. These three complexes form a short exact sequence, whose associated long exact sequence in homology is exactly \eqref{eq:tri-}.

The case when the crossing $p$ is positive is similar, but now the diagram for $S$ uses $A^0$ and $A^+$. By $\XX'(p) \subset CFL^-(S)$ we mean the subcomplex generated by configurations that contain the point $x'$ in Figure~\ref{fig:exact}. Let $\YY'(p)$ be the corresponding quotient complex, and observe that $\XX'(p)$ is still canonically isomorphic to $CFL^-(\X)$.

We have a doubly-filtered complex
 \begin {equation}
 \label {eq:square+}
 \xymatrixcolsep{5.5pc} 
\xymatrixrowsep{3.5pc}
\xymatrix{
 \XX'(p) \ar[d]_{U_a + U_b - U_c - U_d} \ar[r]^{\Phi_B} \ar[dr]^{\Phi_{A^+B}} & \YY'(p) \ar[d]^{\Phi_{A^+}} 
 \\
 \XX'(p) \ar[r]_{\operatorname{id}}  & \XX'(p),
 }
\end {equation}
where $\Phi_B, \Phi_{A^+}$ and $\Phi_{A^+ B}$ are the analogues of $\Phi_B, \Phi_{A^-}$ and $\Phi_{A^- B}$ from the negative case, but using the region marked $A^+$ instead of $A^-$. The total complex \eqref{eq:square+} is quasi-isomorphic to its top row, which is $CFL^-(S)$. The right column forms a subcomplex $CFL^-(\Z)$, and the left column a quotient complex $CFL^-(\X) \otimes \L(p).$
The associated long exact sequence in homology is \eqref{eq:tri+}.
\end {proof}

For future reference, when $S$ is a {\em complete} resolution of $\K$, we let
 \begin {equation}
 \label {eq:ls}
 \L_{S} := \bigotimes_{p \in c(S)} \L(p),
 \end {equation}
with $\L(p)$ as in \eqref{eq:lp}.

\subsection {The spectral sequence} \label {sec:spek}
As mentioned in the proof of Theorem~\ref{thm:exact}, from any crossing $p$ we can produce a  filtration on a complex quasi-isomorphic to $CFL^-(S)$, given by the horizontal direction in a diagram of the form \eqref{eq:square-} or \eqref{eq:square+}. When $S = \K$ is the original knot projection, by combining these constructions at all crossings, we can in fact build a big complex $(C_{\tot}, D_{\tot})$, which is canonically quasi-isomorphic to $CFL^-(\K)$, via contracting various identity maps that are part of $D_{\tot}$. Further, $C_{\tot}$ contains several two-step filtrations: for each crossing $p \in c(\K)$, we consider the horizontal filtration from either \eqref{eq:square-} or \eqref{eq:square+}. We let $\F$ denote the sum of all these filtrations. 

The filtration $\F$ on $C_{\tot}$ produces a spectral sequence $\{ (E_k, d_k) \}_{k \geq 0}$ that converges to $HFL^-(K)$, the homology of $H_*(C_{\tot}, D_{\tot})$. The complex $(E_0, d_0)$ (which is still $C_{\tot}$ as an $\R$-module, but with $d_0$ only made of the terms that preserve $\F$) splits as a direct sum of complexes
$$ E_0 = \bigoplus_{I: c(\K) \to \{0,1\}} C_I,$$
with each $C_I$ being quasi-isomorphic to 
$$CFL^-(S_I(\K)) \otimes \L_{S_I(\K)}.$$

This spectral sequence is exactly the one mentioned in Theorem~\ref{thm:os} (a). The $E_1$ term is described differently in the Introduction, but the two descriptions are equivalent:

\begin {proposition}[cf. Theorem 3.1 in \cite{CubeResolutions}]
\label {prop:hfs}
For any complete resolution $S$ of $\K$, we have an isomorphism
\begin {equation}
\label {eq:hfs}
 H_*(CFL^-(S) \otimes \L_S) \cong \Tor_*(\RR/L_S, \RR/N_S) \otimes \Lambda^* V_S.
 \end {equation}
\end {proposition}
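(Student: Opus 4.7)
The plan is to view $CFL^-(S) \otimes \L_S$ as a model for the derived tensor product $\RR/L_S \otimes^{\mathbb{L}}_\RR \RR/N_S$, up to the combinatorial factor $\wedge^* V_S$. I would split the argument into three stages.

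First, I would reduce to the case that $S$ is connected. Lemma~\ref{lem:disjoint} expresses $CFL^-(S)$ as a tensor product of the Floer complexes of the connected components, together with a tori-homology bookkeeping factor. At the same time $\L_S = \bigotimes_i \L_{S_i}$ is a tensor product over components, and both ideals $L_S$ and $N_S$ split additively under the ring factorization $\RR \cong \bigotimes_i \RR_i$. The splitting of $L_S$ is immediate, and for $N_S$ one uses the identity $xy - x'y' = (x-x')y + x'(y-y')$ to rewrite $N(W)$ for a mixed collection $W = W_0 \sqcup W_1$ as an $\RR$-linear combination of $N(W_0)$ and $N(W_1)$. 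Combined with a K\"unneth argument, this reduces the claim to the connected case, with the tori-homology and unknot-component contributions matching $\wedge^* V_S$.

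Second, for a connected complete resolution $S$, I would show that $CFL^-(S)$, viewed as a complex of free $\RR$-modules, is quasi-isomorphic to $\RR/N_S$ concentrated in degree zero. This is the substantive Floer-theoretic computation: the generators of $CFL^-(S)$ are identified with combinatorial ``state'' data on the graph of $S$, and an explicit disk count in the planar Heegaard diagram (Section~\ref{sec:planar}), using the admissibility of Lemma~\ref{lem:adm} to rule out infinite sums, shows that the boundary map produces exactly the generators $N(W) = \prod_{e \in \Out(W)} U_e - \prod_{e \in \In(W)} U_e$ and no others. This parallels the twisted analysis of \cite[Theorem 3.1]{CubeResolutions}, with the specialization $t = 1$ being essentially formal once Lemma~\ref{lem:adm} is in hand.

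Third, since $\L_S = \bigotimes_{p \in c(S)} \L(p)$ is the Koszul complex on the linear forms $\{L(p)\} \subset \RR$, and consists of free $\RR$-modules, the quasi-isomorphism $CFL^-(S) \to \RR/N_S$ from step two yields $H_*(\L_S \otimes CFL^-(S)) \cong H_*(\L_S \otimes \RR/N_S)$. Provided the $L(p)$ form a regular sequence in $\RR$ — which I expect to verify via acyclicity of the four-valent subgraph after cutting along the distinguished edge — this Koszul homology is exactly $\Tor_*(\RR/L_S, \RR/N_S)$, completing the proof. The main obstacle will be step two: executing the holomorphic disk count so that the differential on $CFL^-(S)$ produces precisely the generators of $N_S$, with no spurious relations. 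A secondary concern is the regularity of $\{L(p)\}$; should it fail in some configurations, one would need to replace $\L_S$ by a genuine free resolution of $\RR/L_S$ and argue that the substitution preserves the total homology.
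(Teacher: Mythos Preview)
Your three-stage outline matches the paper's proof: reduce to connected components via Lemma~\ref{lem:disjoint}, invoke the connected case $CFL^-(S) \simeq \RR/N_S$, and use that $\L_S$ is the Koszul complex on the regular sequence $\{L(p)\}$ (regularity is cited from \cite[Lemmas~2.4 and~3.11]{RasmussenD}, not reproved here, so your acyclicity argument is not needed).

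Where your plan diverges is the proposed execution of step two. The argument behind \cite[Theorem~3.1]{CubeResolutions}, as summarized in the paper, does \emph{not} proceed by a direct disk count in the planar Heegaard diagram of Section~\ref{sec:planar}, and the generators of that diagram are not identified with Kauffman-state data. Instead, Ozsv\'ath and Szab\'o pass between three different Heegaard diagrams: first a higher-genus one in which generators \emph{are} Kauffman states, used to establish that $HFL^-(S)$ is supported in a single algebraic grading $\Malg$; then a third diagram (obtained from the planar one by handleslides at every smoothed crossing) in which there is a unique generator $\x$ in the lowest $\Malg$-grading, and the coefficients of $\x$ in the differentials of the next-to-lowest generators are exactly the elements $N(W)$. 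The single-grading-support statement is the crucial ingredient --- it is what allows one to conclude that the full homology equals $\RR/N_S$ from this purely local computation near the bottom. A direct attack on the planar diagram, which has many generators spread across several gradings and complicated disk counts, would not yield the relations $N(W)$ ``and no others'' in any transparent way.
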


\begin {proof}
Recall that $L_S \subset \R$ is the ideal generated by the elements $L(p)$ of the form $U_a + U_b - U_c - U_d$ for $p \in c(S)$. These elements  form a regular sequence in the ring $\R$; compare \cite[Lemmas 2.4 and 3.11]{RasmussenD}. Hence, the complex $\L_S$ from \eqref{eq:ls} is a Koszul resolution of $\R/L_S$. Since $\Lambda^* V_S$ is free, it follows that the right hand side of \eqref{eq:hfs} is the homology of the complex $(\RR/N_S) \otimes \Lambda^* V_S \otimes \L_S.$ Thus, it suffices to show that the complexes $CFL^-(S)$ and $\RR/N_S \otimes \Lambda^* V_S$ are quasi-isomorphic. This claim can be further reduced to the case when the complete resolution $S$ is connected, using Lemma~\ref{lem:disjoint}.

When $S$ is connected, we are left to show that $CFL^-(S)$ is quasi-isomorphic to $\RR/N_S.$ This is the content of \cite[Theorem 3.1]{CubeResolutions}. Roughly, the proof (due to Ozsv\'ath and Szab\'o, and partly based on their joint work with Stipsicz in \cite{OSSsingular}) goes as follows. They consider a different Heegaard diagram (of higher genus) for the singular knot $S$, such that the generators of the Floer complex can be related to Kauffman states for the diagram of $S$; compare \cite[Section 4]{OSSsingular}. Using this diagram they find that $HFL^-(S)$ is supported in a unique algebraic grading $\Malg$. They also consider a third diagram for $S$, which is obtained from the planar diagram from Subsection~\ref{sec:planar} by handlesliding $\alpha_2$ over $\alpha_1$ and $\beta_2$ over $\beta_1$ in Figure~\ref{fig:exact}, at all crossings $p \in c(\K)$ where $S$ is smoothed. In this third diagram, the Floer complex has a unique generator $\x$ in the lowest algebraic grading. By studying the generators in the second lowest algebraic grading, and the coefficients with which $\x$ appears in their differential, they conclude that the homology in the lowest algebraic grading is isomorphic to $\RR/N_S$.
\end {proof}

Theorem~\ref{thm:os} (a) follows directly from Proposition~\ref{prop:hfs} and the discussion preceding it. The proof of Theorem~\ref{thm:os} (b) is similar, but using Floer complexes where we set the variable $U_0$ to zero.

\begin {remark}
The statement of Theorem~\ref{thm:os} only refers to the $E_1$ pages as modules. In the original cube of resolutions with twisted coefficients from \cite{CubeResolutions}, the differential $d_1$ was also identified explicitly, in terms of zip and unzip maps. We expect that one can identify $d_1$ (and thus the $E_2$ pages) explicitly in the untwisted setting, too. However, this would require a careful analysis of the generators of $HFL^-(S)$ for disconnected resolutions $S$. (The setting with twisted coefficients was simpler because the Floer homology groups of disconnected resolutions were trivial.) 
\end {remark}

\section {HOMFLY-PT homology}
\label {sec:kr}
In this section and the following ones, we will go back to working with coefficients in $\zz$ rather than $\ff.$ We consider the base ring $\R = \zz[U_0, \dots, U_{2n}]$, as in the Introduction, and all tensor products will be taken over $\R$ unless otherwise noted.

Our main goal in this section is to prove Theorem~\ref{prop:kr}, about the HOMFLY-PT complex. 

We will work with the definition of HOMFLY-PT homology given by Rasmussen in \cite{RasmussenD} (using integral coefficients, as in Krasner's work \cite{Krasner}). It was shown in \cite[Section 3.4]{RasmussenD} that this definition is equivalent to the original one from \cite{KR2}, due to Khovanov and Rozansky. 
 
Start with a decorated braid projection $\K$ for a knot $K$, as before. We have an ideal $L_\K \subset \R$, generated by all linear elements $L(p)$ as in  \eqref{eq:Lofp}, for $p \in c(\K)$. Define the {\em edge ring}
$$ \R' := \R/L_\K.
\footnote{Our notation is the opposite of the one in \cite{RasmussenD}, where the original polynomial ring was denoted $R'$, and the edge ring was denoted $R$.}
$$

To each complete resolution $S$ of $\K$ we associate a complex $\Bkr(S)$, defined as a tensor product of $n$ two-step complexes:
$$\Bkr(S) := \bigotimes_{p \in c(K)} \Q_S(p).$$
Here, if the edges meeting at $p$ are labeled as in Figure~\ref{fig:res}, we take
\begin {equation}
\label {eq:qs}
 \Q_S(p) = \begin {cases} \R' \xrightarrow{U_a - U_c} \R' & \text{ if $S$ has a smoothing at $p$},\\
 \R' \xrightarrow{(U_a - U_c)(U_a - U_d)}\R' & \text{ if $S$ has a singularization at $p$}. 
\end {cases}
\end {equation}

The HOMFLY-PT chain complex is defined as
$$ \Ckr (\K) =  \bigoplus_{I: c(\K) \to \{0,1\}} H_*( \Bkr(S_I(\K))),$$
with a differential $D_{\KR}$ given by suitable zip and unzip maps, which can be described explicitly; see \cite{KR2} or \cite{RasmussenD} for details.

The homology of $\Ckr(\K)$ is $\Hkr(K)$, the middle HOMFLY-PT homology. If we set the variable $U_0$ to zero in the complex and then take homology, we get $\bHkr$, the reduced version of HOMFLY-PT homology. It was shown in \cite{KR2}, \cite{RasmussenD}, \cite{Krasner} that these homologies are invariants of the knot $K$.

\begin {remark}
\label {rem:mod}
Strictly speaking, this definition differs slightly from the ones in \cite{RasmussenD}, \cite{Krasner}. In \cite{RasmussenD} and \cite{Krasner} one did not have a distinguished edge subdivided in two, but rather each edge in the braid projection had its own $U$ variable. In our picture, if $U_0$ and $U_1$ are the variables corresponding to the two segments on the distinguished edge, observe that $U_0 - U_1$ is an element of $L_\K$, being equal to the sum of all linear elements $L(p)$ for $p \in c(\K)$. Thus, in the edge ring $\R'$ the variables $U_0$ and $U_1$ are identified; since our definition only involves complexes of $\R'$-modules, the end result is the same as if we had only one variable $U_0 = U_1$. 
\end {remark}

Before moving to the proof of Theorem~\ref{prop:kr}, we need a lemma. Recall that in the Introduction we defined a partial braid graph to be part of a decorated braid projection. To every connected partial braid graph $S$ we associated an ideal $Q_S$, generated by elements $Q(p)$, one for each (interior) vertex in $S$ that is not special. Here, $Q(p) = U_a U_b - U_c U_d$ if $p$ is four-valent (with outgoing edges $a$ and $b$, and incoming edges $c$ and $d$), and $ Q(p) = U_e - U_f$ if $p$ is two-valent (with outgoing edge $e$ and incoming edge $f$). We denote by $v(S)$ the set of interior (two-valent or four-valent) vertices of $S$.

\begin {lemma}
\label {lem:qregular}
Let $S$ be a connected partial braid graph, with at least two exterior edges. Then the elements $Q(p)$ for $p \in v(S)$ form a regular sequence in the ring $\R$.
\end {lemma}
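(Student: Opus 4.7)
The plan is a Gr\"obner-style degeneration: I would choose a real weight on the edges of $S$ that turns each $Q(p)$ into a single monomial upon taking the initial term, and deduce regularity of $(Q(p))_{p\in v(S)}$ from the obvious regularity of a sequence of monomials with pairwise disjoint variable supports.

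The main obstacle is verifying the existence of the weight. Concretely, I want an assignment $w_e \in \mathbb{R}$ to each edge $e$ of $S$ such that at every $p \in v(S)$,
\[
 \sum_{e \in \Outof(\{p\})} w_e \;>\; \sum_{e \in \Into(\{p\})} w_e.
\]
Writing $M$ for the matrix whose row at $p$ is $\sum_{e \in \Outof(\{p\})} \mathbf{e}_e - \sum_{e \in \Into(\{p\})} \mathbf{e}_e$, Gordan's theorem says such a $w$ exists if and only if there is no nonzero $v \in \mathbb{R}_{\geq 0}^{v(S)}$ with $v^{T} M = 0$. The equation $v^{T} M = 0$ reads: $v_p = v_{p'}$ whenever $p$ and $p'$ are joined by an interior edge, and $v_p = 0$ whenever $p$ is incident to an exterior edge. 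Any path in $S$ between two elements of $v(S)$ must travel along interior edges only (exterior edges dead-end at a loose end and so cannot be crossed), so connectedness of $S$ forces the graph with vertex set $v(S)$ and edge set the interior edges to be connected. Hence $v$ must be constant on $v(S)$, and the presence of at least one exterior edge forces that constant to be $0$. This is exactly where the hypothesis of the lemma is used, and it is essential: a closed $2$-braid with both crossings singularized has $Q(p_1) + Q(p_2) = 0$, so the sequence is not regular there.

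With such a $w$ fixed, the initial term of $Q(p)$ is the ``outgoing monomial'' $\prod_{e \in \Outof(\{p\})} U_e$, namely $U_e$ in the 2-valent case and $U_a U_b$ in the 4-valent case. These initial monomials have pairwise disjoint variable supports, since every edge of $S$ is outgoing from at most one vertex (its tail) and so no variable $U_e$ appears in two different initial terms. Squarefree monomials in disjoint variables trivially form a regular sequence in $\R$, so the ideal they generate has codimension exactly $|v(S)|$. Because passing to initial ideals with respect to a weight preserves Hilbert series, this gives $\dim \R / Q_S \leq \dim \R - |v(S)|$; Krull's Hauptidealsatz supplies the reverse inequality, and Cohen--Macaulayness of the polynomial ring $\R$ converts this dimension equality into the assertion that $(Q(p))_{p \in v(S)}$ is a regular sequence.
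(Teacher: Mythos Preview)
Your proof is correct and takes a genuinely different route from the paper's.

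The paper argues by direct induction on $|v(S)|$. It strengthens the statement to assert that the $Q(p)$ together with the variables $U_a$ for all \emph{incoming} exterior edges $a$ form a regular sequence, then removes a vertex $p$ having at least one outgoing exterior edge and reduces to the smaller graph $S' = S \setminus \{p\}$. The inductive step is a case analysis on the valence of $p$ and on which of the edges at $p$ are exterior; in each case one observes that $Q(p)$ (and the new $U_a$'s) involve variables absent from the smaller sequence $r(S')$, so the enlarged sequence stays regular. This is completely elementary---no Gr\"obner theory, no Cohen--Macaulayness---at the price of a somewhat lengthy case check, and it yields as a byproduct the stronger statement with the incoming exterior variables thrown in.

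Your argument trades the induction for a one-shot degeneration. The Gordan/Stiemke step cleanly isolates where the hypotheses enter (connectedness forces the dual vector $v$ to be constant on $v(S)$, and the presence of an exterior edge pins that constant to zero), and the observation that distinct vertices have disjoint outgoing-edge sets settles the combinatorics of the initial monomials all at once. Two small remarks on the final step. First, the chain ``Hilbert series $\Rightarrow$ codimension $\Rightarrow$ regularity'' is most transparent when $\R$ is a polynomial ring over a field; over $\mathbb{Z}$ the conclusion still holds because $\R=\mathbb{Z}[U_e]$ is Cohen--Macaulay and regularity can be checked on localizations via Koszul homology (in a CM local ring, $n$ elements generating an ideal of height $n$ are automatically a regular sequence). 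Alternatively one can run the whole argument over $\mathbb{Q}$ and over each $\mathbb{F}_p$ and assemble. Second, you can bypass Hilbert series and Cohen--Macaulayness entirely: the weight $w$ endows the Koszul complex $K\bigl((Q(p))_p;\R\bigr)$ with a filtration whose associated graded is $K\bigl((\mathrm{in}_w Q(p))_p;\R\bigr)$, and acyclicity of the latter forces acyclicity of the former directly.
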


\begin {proof}
We can make $\R$ into a graded ring by giving each variable $U_i$ grading one. A sequence of homogeneous elements in $\R$ is regular if and only if any permutation of the sequence is regular. Since we only consider homogeneous elements, we will not need to specify their ordering.

We will use induction on the cardinality of $v(S)$ to prove a stronger statement than the one in the Lemma, namely that:

\medskip
\noindent (*) {\it The elements $Q(p)$ for $p \in v(S)$, together with the elements $U_a$ for all incoming exterior edges $a$ of $S$, form a regular sequence $r(S)$ in $\R$.}
\medskip

The base case is when $S$ has a single vertex $p$. There are three possibilities, according to whether: $p$ is two-valent; $p$ is  four-valent and $S$ has four distinct edges meeting at $p$;  or $p$ is four-valent and $S$ has three distinct edges, one of which forming a loop from $p$ to itself. Checking (*) in each of these examples is straightforward.

For the inductive step, pick a vertex $p \in v(S)$ such that at least one of the edges coming out of $p$ is an exterior edge of $S$. Consider the partial braid graph $S'$ obtained from $S$ by deleting $p$ and the exterior edges starting or ending at $p$. By the inductive hypothesis, the claim (*) is true for $S'$. Indeed, although $S'$ may be disconnected, any of its connected components has at least one (hence at least two) exterior edges. Since the variables on different connected components are different, the sequence $r(S')$ (composed of $r(T)$ for all connected components $T$) is regular.

We now distinguish several cases:

\smallskip
\noindent (i)  \ $p$ is a two-valent vertex in $S$ with outgoing edge $e$ and incoming edge $f$.  Then the sequence $r(S)$ is obtained from the regular sequence $r(S')$  by adding $U_e - U_f$. Since the variable $U_e$ did not appear in $r(S')$, the new sequence $r(S)$ must be regular. 

\smallskip
\noindent (ii) \ $p$ is a four-valent vertex in $S$ with both outgoing edges $a$ and $b$ being exterior. Let $c$ and $d$ be the incoming edges at $p$. Note that if $S$ is connected and has more than one crossing, it cannot be that both $c$ and $d$ are exterior edges. Therefore, we have two subcases:
\begin {itemize}
\item Neither of the incoming edges is exterior in $S$. Then $r(S)$ is obtained from $r(S')$ by adding $U_a U_b - U_c U_d$. Since $U_a$ and $U_b$ do not appear in $r(S')$, we get that $r(S)$ is regular.

\item One of the incoming edges (say, $c$) is exterior in $S$. Then $r(S)$ is obtained from $r(S')$ by adding $U_c$ and $U_a U_b - U_c U_d$. Since $U_a, U_b$ and $U_c$ do not appear in $r(S')$, again we get that $r(S)$ is regular.
\end {itemize}

\smallskip
\noindent (iii) \  $p$ is a four-valent vertex in $S$ with only one outgoing edge being exterior. Say that $a$ is the exterior outgoing edge, $b$ the other outgoing edge at $p$, and $c$ and $d$ the incoming edges. Let $I$ be the ideal of $\R$ generated by all elements of $r(S')$ except $U_b$. We have three subcases:
\begin {itemize}
\item Neither of the incoming edges is exterior in $S$. Then $r(S)$ is obtained from $r(S')$ by deleting $U_b$ and adding $U_a U_b - U_c U_d$. We know that $U_b$ is not a zero-divisor in $\R/I.$ The same must be true for $U_a U_b - U_c U_d$, because the variable $U_a$ does not appear in $r(S')$. Therefore, $r(S)$ is regular.

\item Exactly one of the incoming edges (say, $c$) is exterior in $S$. Then $r(S)$ is obtained from $r(S')$ by deleting $U_b$, and adding $U_c$ and $U_a U_b - U_c U_d$. Again, we know that $U_b$ is not a zero-divisor in $\R/I$. Since $U_c$ and $U_a$ do not appear in $r(S')$, we get that $U_b$ is not a zero-divisor in $\R/(I + (U_c))$, and from here that the new sequence $r(S)$ is regular. 

\item Both $c$ and $d$ are exterior edges. Then $r(S)$ is obtained from $r(S')$ by deleting $U_b$, and adding $U_c, U_d,$ and $U_a U_b - U_c U_d$. Since $U_b$ is not a zero-divisor in $\R/I$, and $U_c, U_d, U_a$ do not appear in $r(S')$, it follows that $r(S)$ is regular. 
\end {itemize}

This completes the inductive proof of (*). \end {proof}

\begin {proof}[Proof of Theorem~\ref{prop:kr}] If $x_1, \dots, x_n$ are (not necessarily distinct) elements of $\R$, we will denote by $\Koszul\{x_1, \dots, x_n\}$ the Koszul complex associated to $x_1, \dots, x_n$, that is, 
$$ \Koszul\{x_1, \dots, x_n\} = \bigotimes_{i=1}^n \bigl( \R \xrightarrow{ x_i } \R \bigr).$$

Moreover, adjusting the notation in \eqref{eq:lp} and \eqref{eq:ls} to the coefficient ring $\R$ rather than $\RR = \R \otimes_{\zz} \ff$, we set
$$ \L(p) = \Koszul\{L(p)\}, \ p \in c(\K),$$
and
$$ \L_S = \bigotimes_{p \in c(S)} \L(p) = \Koszul \{ L(p) \mid p \in c(S) \}.$$
Recall from the proof of Proposition~\ref{prop:hfs} that the elements $L(p), p \in c(S)$ (which generate the ideal $\L_S \subset \R$) form a regular sequence. Thus, $\L_S$ is a free resolution of the quotient module $\R/L_S.$ In particular, $\L_\K$ is a free resolution of the edge ring $\R'$, viewed as an $\R$-module.

Let $p \in c(\K)$. If $S$ has a smoothing at $p$ (that is, $p \in c(\K) \setminus c(S)$), then $p$ produces two vertices in the graph of $S$, which we denote by $p^l$ (the one on the left) and $p^r$ (the one on the right). Equation \eqref{eq:qs} then reads $\Q_S(p) = \R' \otimes \Koszul\{Q(p^l)\}.$ If $S$ has a singularization at $p$, since $U_a + U_b - U_c - U_d = 0$ in $\R'=\R/L_{\K}$, we get $-(U_a-U_c)(U_a - U_d) = U_a U_b - U_c U_d,$ so Equation \eqref{eq:qs} can be read as $\Q_S(p) = \R' \otimes \Koszul\{Q(p)\}.$ Thus,
$$ \Bkr(S) \cong \R' \otimes \Koszul\bigl( \{ Q(p) \mid p \in c(S) \} \cup \{ Q(p^l) \mid p \in c(\K)\setminus c(S)\} \bigr) .$$

Note that the ideal $L_\K$ differs from $L_S$ in that the generator set of $L_\K$ also contains the elements $L(p)$, where $p$ is a crossing of $\K$ smoothed in $S$. Since these elements form a regular sequence, $\R' = \R/L_\K$ is quasi-isomorphic (over $\R$) to
$$\R/L_S \otimes \Koszul\{L(p) \mid p \in c(\K) \setminus c(S) \}.$$

From here we get a quasi-isomorphism
\begin {equation}
\label {eq:bucurs}
 \Bkr(S) \sim \R/L_S \otimes \Koszul\bigl(  \{ Q(p) \mid p \in c(S) \} \cup \{ L(p), Q(p^l) \mid p \in c(\K)\setminus c(S)\} \bigr).
 \end {equation}

Since $L(p) = Q(p^l) + Q(p^r)$ for $p \in c(\K)\setminus c(S)$, it follows that (up to quasi-isomorphism)
we can replace $L(p)$ with $Q(p^r)$ in \eqref{eq:bucurs}. Recall that $v(S) = c(S) \cup \{p^l, p^r \mid p \in c(\K)\setminus c(S)\}$ is the set of (interior) vertices in the graph of $S$. Therefore, we can write
\begin {equation}
\label {eq:bucurs2}
 \Bkr(S) \sim \R/L_S \otimes \Koszul \{ Q(p) \mid p \in v(S) \}.
  \end {equation}

Let us denote by $e_0, e_1, \dots, e_k$ the edges in $\K$ that are drawn around the braid to take its closure, ordered from right to left (on the right side of the diagram), as in Figure~\ref{fig:fig8}. Each connected component of the complete resolution $S$ contains a certain number of consecutive edges among $e_0, \dots, e_k$. If $S$ has $m+1$ connected components, we denote them by $S_0, \dots, S_m$, so that $S_j$ contains $e_{i_j}, \dots, e_{i_{j+1} -1},$ for $0 = i_0  < i_1 < \dots < i_{j+1} = k+1.$

For each $j=1, \dots, m,$ let $p_j \in v(S)$ be a two-valent vertex on $S_j$ coming from the right hand side of a smoothed crossing in $\K$ whose left hand side ended up in $S_{j-1}$. (In the terminology from the Introduction, $p_j$ is the {\em special} vertex on $S_j$.) Let $S_j'$ denote the partial braid graph obtained from $S_j$ by deleting the vertex $p_j$, so that the two edges meeting at $p_j$ become exterior edges. (In the particular case when $S_j$ has a single edge from $p_j$ to itself, we let $S'_j = \emptyset.$) We have $v(S_j) = v(S'_j) \cup \{p_j\}.$ Set also $S'_0 = S_0$. Thus, each $S_i'$ is a connected partial braid graph with at least two exterior edges.

For $j=1, \dots, m,$ since $S_j$ has no exterior edges, the sum of the linear elements $L(p)$ for $p \in c(S_j)$ and $Q(p)$ for two-valent vertices $p \in v(S_j)$ is exactly zero. We get that the sum of $Q(p)$ for two-valent vertices $p \in v(S_j)$ is zero in $\R/L_S.$ By taking linear combinations of the generators in a Koszul complex, we can transform  \eqref{eq:bucurs2} into
\begin {equation}
\label {eq:bucurs3}
 \Bkr(S) \sim \R/L_S \otimes \Lambda^*V_S \otimes \Koszul  \{ Q(p) \mid p \in v(S), p \neq p_j \text{ for any } j > 0 \}.
 \end {equation}
Indeed, after tensoring with $\R/L_S$, each component $S_j$ ($j > 0$) produces a term $\Koszul\{0\},$ and together these terms give $\Lambda^* V_S$. 

Note that
$$  \{ Q(p) \mid p \in v(S), p \neq p_j \text{ for any } j \} = \bigcup_{j=0}^m \{Q(p) \mid p \in v(S_j') \},$$
and, by definition, this set generates the ideal $Q_S \subset \R$.

By Lemma~\ref{lem:qregular}, the elements $Q(p)$ for $p \in v(S_j')$ (with $j$ fixed) form a regular sequence. Since the variables along the edges of $S_j'$ are different for different $j$, it follows that all the elements $Q(p)$ that produce the Koszul complex in \eqref{eq:bucurs3} form a regular sequence in $\R$. Hence, that Koszul complex is a free resolution of the module $\R/Q_S$. Tensoring this resolution with $\R/L_S$ and then taking homology we obtain $\Tor_*(\R/L_S, \R/Q_S)$. From \eqref{eq:bucurs3} we see that $H_*(\Bkr(S)) \cong \Tor_*(\R/L_S, \R/Q_S) \otimes \Lambda^* V_S$, as desired.
\end {proof}

\section {Gradings}
\label {sec:gr}
The HOMFLY-PT chain complex and its homology are triply graded---see \cite{KR2}, \cite{RasmussenD}. Conjecture~\ref{conj1} relates the HOMFLY-PT complex $\Ckr(\K)$ to the complex $\C(\K)$ from the Introduction, which gives the $E_1$ page of the spectral sequence in Theorem~\ref{thm:os}. Thus, we expect $\C(\K)$ to have three gradings as well. In this section we construct these three gradings on $\C(\K)$, which we denote by $\tgr_q, \tgr_h,$ and $\tgr_v$. We will then state a graded refinement of Conjecture~\ref{conj1}.

We first define gradings $\gr_q$ and $\gr_h$ on the ring $\R$ by setting $\gr_q(U_i) = 2$ and $\gr_h(U_i) = 0$ for each variable $U_i$. (In particular, $\gr_q$ is twice the grading on $\R$ considered in Lemma~\ref{lem:qregular}.) Let $S$ be a complete resolution of $\K$ and $p \in c(S)$. We extend $\gr_q$ and $\gr_h$ to gradings on the mapping cone 
$$\L(p) = \Bigl( \R \xrightarrow{U_a + U_b - U_c - U_d} \R \Bigr) $$
by shifting the gradings of the first $\R$ term upwards by $2$ in $\gr_q$, and downwards by $1$ in $\gr_h$. (This way, the map defining $\L(p)$ preserves $\gr_q$ and increases $\gr_h$ by one.) Next, we equip $\R/N_S$ with the bigrading descended from $\R$. We also assign bigrading $(\gr_q, \gr_h) = (2, -1)$ to each generator of $V_S$, and this induces a bigrading on the wedge product $\Lambda^* V_S$. 

Define the complex
$$\B(S) := \L_S \otimes (\R/N_S) \otimes \Lambda^*V_S,$$
whose homology is $\hB(S) = \Tor_*(\R/L_S, \R/N_S) \otimes \Lambda^*V_S$; compare the proof of Proposition~\ref{prop:hfs}. 

By construction, we have a bigrading $(\gr_q, \gr_h)$ on $\B(S)$ and on its homology. Note that on the homology $\hB(S) = \Tor_*(\R/L_S, \R/N_S) \otimes \Lambda^*V_S$, we can get $\gr_h$ alternatively as minus the sum of the natural gradings on $\Tor_*$ and $\Lambda^*$.

From here we get a bigrading on the group $\C(\K) = \bigoplus_I \hB(S_I(\K))$ by  normalizing $\gr_q$ and $\gr_h$ as follows. Let $k$ be the braid index of $\K$, and $N_+, N_-$ be the number of positive resp. negative crossings in $\K$. For $I: c(\K) \to \{0,1\},$ we let $\|I\| = \sum_{p \in c(\K)} I(p)$. On a term $\hB(S) \subseteq \C(\K)$ with $S = S_I(\K)$, we set
\begin {eqnarray*}
 \tgr_q &=& \gr_q -  \ \# c(S) - \| I \| + N_- + k, \\
  \tgr_h &=& \gr_h + (N_+ - N_- + k - 1)/2. 
  \end {eqnarray*}

We also define a third grading on $\C(\K)$ that (up to a constant) measures the depth in the hypercube of resolutions:  
$$ \tgr_v = \| I \| - (N_+ + N_- + k - 1)/2.$$

It is instructive to relate our gradings to the usual ones for knot Floer homology, from \cite{Knots}, \cite{OSSsingular}, \cite{CubeResolutions}; see also Subsection~\ref{sec:planar}. On the complex $\B(S)$ and its homology $\hB(S)$ we define {\em Alexander} and {\em Maslov} gradings by
$$ A = (-\gr_q +\  \# c(S) - k+1) /2, \ \  M = 2A - \gr_h. $$

We equip the complex $\C(\K)$ with a Maslov grading $M$ coming from the one on each $\hB(S)$, and 
to a  normalized Alexander grading given by:
$$ A' = A + (\| I \| - N_-)/2 = (- \tgr_q + 1)/2. $$

Observe that the Maslov grading on $\C(\K)$ can also be written as
$$ M = - \tgr_q - \tgr_h - \tgr_v + 1.$$

These definitions coincide with the ones used by Ozsv\'ath and Szab\'o in \cite{CubeResolutions}. Note that $-\gr_h$ corresponds to the algebraic grading $N=2A-M$ from \cite[Section 2.3]{CubeResolutions}. Indeed, we can see that $N$ is the same as our $-\gr_h$ as follows: By the arguments in \cite[proof of Theorem 3.1]{CubeResolutions}, we have that $N=-\gr_h$ for the bottom degree generator of $\B(S)$; the general identification is then obtained by keeping track of the gradings in the proof of Proposition~\ref{prop:hfs}.

Recall from Subsection~\ref{sec:spek} that the spectral sequence from Theorem~\ref{thm:os} is induced by a filtration $\F$ on a complex $C_{\tot}.$ As a group, $C_{\tot} = \C(\K)$ splits as 
$$\bigoplus_{I:c(\K) \to \{0,1\}} C_I,$$
with each $C_I$ in filtration degree $-\| I \|$. It is proved in \cite[Section 4.1]{CubeResolutions} that the total differential on $C_{\tot}$ preserves $A'$ and decreases $M$ by one. Moreover, by construction, the differential $d_\ell$ on the $E_\ell$ page of the spectral sequence must increase $\tgr_v$ by $\ell$. In all, it follows that the $d_\ell$ changes the  triple grading $(\tgr_q, \tgr_h, \tgr_v)$ by $(0, 1-\ell, \ell).$ In particular, the differential $d_1$ on $\C(\K)$ preserves $\tgr_q$ and $\tgr_h$ and increases $\tgr_v$ by one.

Therefore, the group $\C(\K)$ splits as
$$ \C(\K) = \bigoplus_{i, j, k \in \zz} \C^{i, j, k}(\K),$$
where we let $x \in \C^{i, j, k}(\K)$ if $x$ is homogeneous with respect to the three gradings, and 
$(i, j, k) = (\tgr_q(x), 2\tgr_h(x), 2\tgr_v(x)).$ We let $H^{i, j, k}(\K)$ be the homology of $\C(\K)$ in the given triple grading, with respect to the differential $d_1$. 

We also obtain induced gradings on the complex $\C(\K)/(U_0 = 0)$ and its homology. We denote a triply graded piece of the homology of $\C(\K)/(U_0 = 0)$ by  $\bH^{i, j, k}(\K),$ but here $(i, j, k) = (\tgr_q(x) -1, 2\tgr_h(x), 2\tgr_v(x)).$

We have chosen our notation to be parallel to that in \cite{RasmussenD}, where Rasmussen defined three gradings $q, \gr_h$ and $\gr_v$ (where $q$ corresponds to our $\gr_q$) on the complex $\Ckr(\K)$, in a very similar way. (See \cite{RasmussenD} for more details.) He then normalized the gradings to get splittings of the middle and reduced HOMFLY-PT homologies
$$ \Hkr(K) = \bigoplus_{i, j, k \in \zz} \Hkr^{i, j, k}(K)  \ \ \text{ and }\ \  \bHkr(\K) = \bigoplus_{i, j, k \in \zz} \bHkr^{i, j, k}(K).$$

The (bigraded) Euler characteristics of these homologies are
\begin {eqnarray*}
 \sum_{i, j, k} (-1)^{(k-j)/2} a^j q^i \rk \bigl( \Hkr^{i, j, k} (K) \bigr) &=& P_K(a, q)/(q^{-1} - q),\\
 \sum_{i, j, k} (-1)^{(k-j)/2} a^j q^i \rk \bigl( \bHkr^{i, j, k} (K) \bigr) &=& P_K(a, q),
 \end {eqnarray*}
where $P_K(a, q)$ is the HOMFLY-PT polynomial of $K$, normalized to be $1$ on the unknot and to satisfy the skein relation:
\begin{equation*}
a P_{\ \includegraphics[scale=0.2]{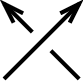}}(a, q) - a^{-1} P_{\ \includegraphics[scale=0.2]{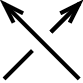}}(a, q) = (q-q^{-1}) P_{\ \includegraphics[scale=0.2]{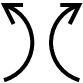}}(a, q).
\end{equation*} 

\begin {remark}
The specialization $\Delta_K(T) = P_K(1, T^{1/2})$ gives the Alexander-Conway polynomial of $K$, mentioned in Subsection~\ref{sec:planar}.
\end {remark}

We are now able to state the following strengthened version of Conjecture~\ref{conj1}: 
\begin {conjecture}
\label {conj1s}
Let $K \subset S^3$ be an oriented knot, with a decorated braid projection $\K$. For any $i, j, k \in \zz,$ we have isomorphisms
$$ H^{i, j, k}(\K) \cong \Hkr^{i, j, k}(K) \ \text{ and } \ \bH^{i, j, k}(\K) \cong \bHkr^{i, j, k}(K).$$
\end {conjecture}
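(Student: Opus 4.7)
The plan is to reduce Conjecture~\ref{conj1s} to its ungraded form Conjecture~\ref{conj1} by a bookkeeping argument on gradings, and then to attack the latter through its reformulation as the Tor-isomorphism Conjecture~\ref{conj2}. To set up the graded reduction, observe that on the Khovanov--Rozansky side the proof of Theorem~\ref{prop:kr} realises $H_*(\Bkr(S))$ as $\Tor_*(\R/L_S,\R/Q_S) \otimes \wedge^* V_S$ using precisely the same Koszul resolution $\L_S$ of $\R/L_S$ and the same exterior factor $\wedge^* V_S$ that enter the definition of $\hB(S)$. Consequently the triple $(\tgr_q,\tgr_h,\tgr_v)$ on $\hB(S)$ is forced to match the triple $(q,\gr_h,\gr_v)$ on $H_*(\Bkr(S))$, up to the global normalisations of Section~\ref{sec:gr}, as soon as the isomorphism promised by Conjecture~\ref{conj1}~(a) is constructed as a morphism of graded $\R$-modules respecting the Koszul and wedge structures.

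For Conjecture~\ref{conj1}~(a), my strategy is to upgrade Theorem~\ref{thm:2} from the ideal-level identity $L_S + N_S = L_S + Q_S$ to a chain-level quasi-isomorphism, proving Conjecture~\ref{conj2} by induction on the number of interior vertices of a connected partial braid graph $S$ with at least one exterior edge. The base case, $S$ a single vertex, is a direct computation. For the inductive step I would pick a vertex $p$ with at least one exterior outgoing edge (as in the proof of Lemma~\ref{lem:qregular}), form $S'$ by deleting $p$, and compare the Koszul resolutions built from the $N$- and $Q$-generators. The goal is to show that each extra generator $N(W)$ of $N_S$ beyond $Q_S$ can be written, modulo $L_S$, as an $\R$-linear combination of the $Q$-generators and previously handled $N(W')$'s, with an explicit Koszul homotopy certifying the relation; such a ``change of basis'' between the two Koszul complexes would yield the isomorphism of all $\Tor_i$.

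For Conjecture~\ref{conj1}~(b), after tensoring with $\ff$, one must match the $d_1$ differential on $\C(\K)$ with the zip and unzip differentials on $\Ckr(\K)$. Starting from the description of $d_1$ as the connecting map in the exact triangles of Theorem~\ref{thm:exact}, and using Lemma~\ref{lem:disjoint} to reduce to connected resolutions component-by-component, I would trace the boundary maps in the filtered complexes \eqref{eq:square-} and \eqref{eq:square+} through the quasi-isomorphism $CFL^-(S) \sim \R/N_S \otimes \wedge^* V_S$ furnished by the argument behind Proposition~\ref{prop:hfs}. On the resulting Tor-side model the connecting homomorphism should be computable explicitly, and one would then verify that it coincides, under the isomorphism of part~(a), with the zip/unzip differential of Khovanov--Rozansky as reorganised by Rasmussen in \cite{RasmussenD}.

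The main obstacle will be the inductive step in Conjecture~\ref{conj2}: ideal equality controls only the $\Tor_0$ piece, and upgrading to higher $\Tor_i$ requires the Koszul complex on the ``extra'' $N(W)$-generators to be acyclic over $\R/L_S$ in a manner compatible with the Koszul structure on the $Q$-generators. The paper's admission that an inductive mechanism has not been found suggests that this step requires a genuinely new combinatorial identity among the $N(W)$'s, going beyond the reductions used in the proof of Theorem~\ref{thm:2}. A secondary obstacle, flagged in the remark at the end of Section~\ref{sec:sps}, is the careful analysis of the generators of $HFL^-(S)$ for disconnected complete resolutions $S$, which was avoided in the twisted setting (where such groups vanished) but is unavoidable for pinning down $d_1$ in the untwisted setting.
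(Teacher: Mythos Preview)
The statement in question, Conjecture~\ref{conj1s}, is a \emph{conjecture} in the paper, not a theorem; the paper offers no proof of it and indeed presents only partial evidence (Theorem~\ref{thm:2}, which is the $i=0$ case of the auxiliary Conjecture~\ref{conj2}, together with computer checks of \eqref{eq:hilbert}). There is therefore no proof in the paper against which to compare your proposal.

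Your proposal is not a proof but a research outline, and you say so yourself: you flag the inductive step in Conjecture~\ref{conj2} as the ``main obstacle'' and note that the paper already admits no such induction has been found. That assessment is correct, and nothing in your outline closes the gap. In particular, the hoped-for ``change of basis'' between the Koszul complex on the $Q$-generators and one built from the $N$-generators cannot be as straightforward as you suggest: the $N(W)$'s do not form a regular sequence in general, and the natural comparison map $f_i$ of \eqref{eq:fi} is shown in Subsection~5.2 to fail already for the one-crossing graph of Figure~\ref{fig:ox}. Your plan for part~(b) --- tracing the connecting maps of \eqref{eq:square-} and \eqref{eq:square+} through the quasi-isomorphism $CFL^-(S)\sim \RR/N_S\otimes\wedge^*V_S$ --- is precisely what the remark closing Section~\ref{sec:spek} identifies as requiring new work for disconnected resolutions; you acknowledge this as a secondary obstacle, but it too remains open.

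One small correction to your grading bookkeeping: the match between $\tgr_q$ on the Floer side and the $q$-grading on the Khovanov--Rozansky side is not automatic from ``using the same Koszul resolution $\L_S$''. On the HOMFLY-PT side the Koszul differential at each vertex shifts $\gr_q$ by $2$ rather than preserving it (see the discussion just before Conjecture~\ref{conj3}). The correct graded comparison at the level of a single summand is Conjecture~\ref{conj3}, which carries a shift of $2i$ in $\gr_q$; it is this shifted statement, not a naive identification, that is consistent with Conjecture~\ref{conj1s} after the global normalisations of Section~\ref{sec:gr}.
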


\begin {remark}
\label {rem:lit}
For the readers more familiar with other sources, it is worth recalling how Rasmussen's conventions compare with others. In the original reference \cite{KR2}, Khovanov and Rozansky had three gradings  as well. As mentioned in \cite[Proposition 3.13]{RasmussenD}, an element with grading $(i, j, k)$ in Rasmussen's notation corresponds to one with grading $(j/2, i-j/2, k/2)$ in the notation of \cite{KR2}. Also, in \cite{DGR}, Dunfield, Gukov, and Rasmussen worked with a polynomial in three variables $a, q, t$. A homology generator in grading $(i, j, k)$ in the notation of \cite{RasmussenD} corresponds to a monomial $a^j q^i t^{(j-k)/2}$ in the notation of \cite{DGR}.
\end {remark}

We saw that the complex $\C(\K)$ admits a triple grading $(i, j, k) = (\tgr_q, 2\tgr_h, 2\tgr_v)$. We also saw that the differential $d_\ell$ on the $E_\ell$ page of the spectral sequence from Theorem~\ref{thm:os} changes this  triple grading by $(0, 2-2\ell, 2\ell).$ In particular, when $\ell=2$ the grading change is by $(0, -2, 4),$ which translates into $(-2, 0, -3)$ in the conventions of \cite{DGR}; see Remark~\ref{rem:lit}. This exactly corresponds to the projected behavior of the ``$d_0$ differential'' in \cite{DGR}. Thus, if Conjectures~\ref{conj1} and \ref{conj1s} were true and the spectral sequence happened to collapse at the $E_2$ stage, Theorem~\ref{thm:os} would imply that knot Floer homology can be obtained from HOMFLY-PT homology by introducing a differential with the grading properties predicted by Dunfield, Gukov, and Rasmussen in \cite{DGR}.

\section {Partial braid graphs and Tor groups}
\label {sec:partial}

This section contains a discussion of Conjecture~\ref{conj2}, about partial braid graphs. In the Introduction, partial braid graphs were defined as subsets of decorated braid projections, where the distinguished edge of the braid projection is viewed as split open into two edges. Alternately, we can give a more intrinsic definition (equivalent to the previous one), as follows.

An {\em open partial braid graph} $\tg$ consists of a finite collection of smooth arcs $\gamma_1, \dots, \gamma_n: [0,1] \to D = [0,1] \times [0,1]$, and a finite collection of vertices $W = \{p_1, \dots, p_m\}$, with the following properties:
\begin {itemize}
\item For each $i,$ the second coordinate $\gamma_i^{(2)}$ of the arc $\gamma_i = (\gamma_i^{(1)}, \gamma_i^{(2)})$ satisfies $\bigl( \gamma_i^{(2)}\bigr)'(t) > 0$ for all $t \in [0,1];$
\item Each $p_j \in W$ lies in the interior of one (or two) arcs $\gamma_i$;
\item Any two arcs intersect transversely, and only in their interior; every intersection point of two arcs is one of the vertices in $W$;
\item The intersection of any three arcs is empty;
\item The number $k$ of arcs with the initial point on $[0,1] \times \{0\}$ is the same as the number of arcs with the final point on $[0,1]\times \{1\}$.
\end {itemize}

An open partial braid graph can be thought of as a particular kind of oriented graph with only univalent, two-valent and four-valent vertices. The univalent vertices (not part of $W$) are the ends of the arcs $\gamma_i$. An example of an open partial braid graph is shown in Figure~\ref{fig:braid}. 

A {\em partial braid graph} $S=\hat \tg$ is defined to be the braid closure of an open partial braid graph $\tg$. This braid closure is obtained by joining the univalent vertices on $[0,1] \times \{0\}$ with the univalent vertices on $[0,1] \times \{1\}$ using $k$ strands on the right, as in Figure~\ref{fig:braid}. We then erase the univalent vertices that were joined by strands. Thus, $\hat \Gamma$ has $2k$ fewer univalent vertices than $\Gamma.$ The univalent vertices of $\hat \Gamma$ are called {\em loose ends}, and the four-valent vertices are called {\em crossings}. The two-valent vertices do not play an essential role, and we will mostly focus on partial braid graphs without two-valent vertices; see Subsection~\ref{sec:no2} below for the relevant discussion. Also, for convenience, we will only discuss {\em connected} partial braid graphs. 

\begin {figure}
\begin {center}
\input {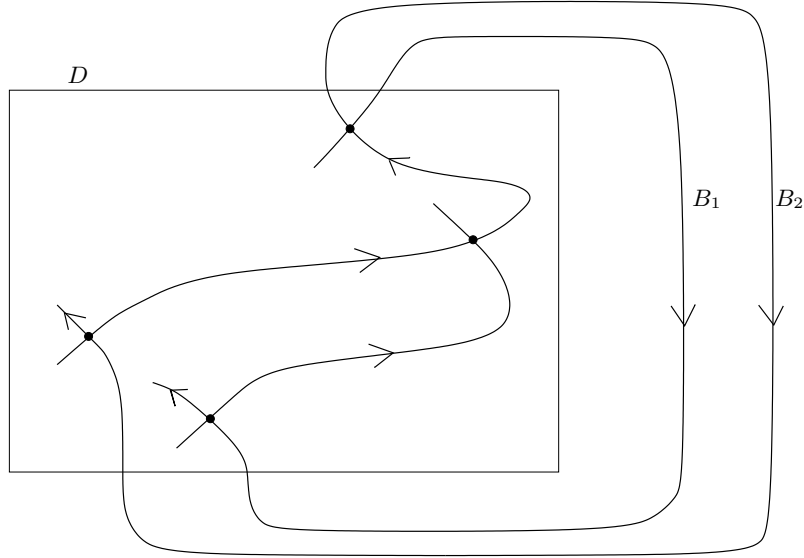}
\caption {A partial braid graph and its closure. Here $n=5$ and $k=2$. The closure $\hat \Gamma$ has six loose ends, four crossings, and no two-valent vertices.}
\label {fig:braid}
\end {center}
\end {figure}

Moreover, we impose another assumption on partial braid graphs:

\begin {assumption}
\label {as}
 $S = \hat \Gamma$ must contain at least one (hence at least two) loose ends. 
 \end {assumption}
 
 This condition is necessary for Conjecture~\ref{conj2} to have a chance of being true; see Subsection~\ref{sec:noas} below for an explanation. From now on, we will always assume that the partial braid graphs are connected and satisfy Assumption \ref{as}.

Let $S = \hat \tg$ be a partial braid graph, with $W$ being the set of its (two-valent and four-valent) vertices. We let $c(S) \subseteq W$ be the subset of four-valent vertices. We also let $E$ be the set of edges of $S$. Each edge $e \in E$ has an induced orientation,  and an initial and a final point; these can be either loose ends, or vertices in $W$.

For each edge $e \in E,$ we introduce a variable $U_e.$ We consider the ring 
$$\R = \zz[\{U_e | e \in E\}].$$

Starting from here, we can define the ideals $L=L_S, N=N_S$ and $Q=Q_S$ from the Introduction, intrinsically in terms of $S$. The ideal $L$ is generated by linear elements $L(p) \in \R$, one for each four-valent vertex $p \in c(S).$ The ideal $Q$ is generated by elements $Q(p), p \in W,$ which are quadratic for four-valent vertices, and linear for two-valent vertices. The ideal $N$ is generated by homogeneous elements $N(W')$, one for each subset  $W' \subseteq W.$ Conjecture~\ref{conj2} claims the existence of $\R$-module isomorphisms
\begin {equation}
\label {eq:tors}
 \Tor_i(\R/L, \R/N) \cong \Tor_i(\R/L, \R/Q),
 \end {equation}
 for all $i \geq 0$.

\medskip 

The rest of this section is devoted to various remarks about Conjecture~\ref{conj2}.

\subsection{Gradings}
\label {sec:conjgr}
Recall that in Section~\ref{sec:gr} we equipped the complex $\C(K)$ with three gradings $\gr_q, \gr_h$ and $\gr_v$, similar to the well-known ones on the HOMFLY-PT complex. It is natural to expect that there is a graded version of Conjecture~\ref{conj2} consistent with the statement of Conjecture~\ref{conj1s}, so that the gradings can be identified at the level of all partial braid graphs. Indeed, the grading $i$ in \eqref{eq:tors} corresponds to $\gr_h$. On the other hand, the grading $\gr_v$ has to do with the relative position in the cube of  resolutions, so it is not visible when we talk about partial braid graphs intrinsically. 

There is still the grading $\gr_q$. For partial braid graphs, we can define $\gr_q$ on $ \Tor_i(\R/L, \R/N)$ by the same rules as in Section~\ref{sec:gr}: each variable $U_i$ is set in grading level $2$, inducing a grading on $\R$ and $\R/N$; then we compute the $\Tor$ group as the homology of the complex $\R/N \otimes \L$, where in 
$$  \L :=  \bigotimes_{p \in c(S)} \Bigl ( \R \xrightarrow{L(p)} \R \Bigr), $$
we shift the grading of the first $\R$ term in each parenthesis upward by $2$ (so that the differential of  the Koszul complex $\L$ preserves the grading $\gr_q$).

Let us define $\gr_q$ on $\Tor_i(\R/L, \R/Q)$ in the same way. However, this does not exactly correspond to the $q$-grading on the HOMFLY-PT complex, as defined in \cite{KR2} or \cite{RasmussenD}, because there the differential $d_+$ at each vertex increases $\gr_q$ by $2$ (instead of preserving it). Thus, we must be careful when relating the gradings $\gr_q$ on the two sides of \eqref{eq:tors}. We arrive at the following graded version of Conjecture~\ref{conj2}, which is the one consistent with Conjecture~\ref{conj1s}, and with our computations:

\begin {conjecture}
\label {conj3}
Let $S$ be a connected partial braid graph (satisfying Assumption~\ref{as}). Then there exist isomorphisms \eqref{eq:tors}, such that the elements in $\gr_q$-grading level $j$ on the left hand side correspond to elements in $\gr_q$-grading level $j+2i$ on the right hand side.
\end {conjecture}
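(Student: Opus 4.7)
The plan is to prove Conjecture~\ref{conj3} by induction on $|c(S)|$, constructing an explicit chain-level isomorphism that witnesses both the abstract isomorphism of $\Tor$-groups and the $\gr_q$-shift. Since the $L(p)$ form a regular sequence in $\R$, the Koszul complex $\L$ is a free $\R$-resolution of $\R/L_S$, and both $\Tor$ groups in \eqref{eq:tors} can be computed as the homology of $\L \otimes_\R \R/N_S$ and $\L \otimes_\R \R/Q_S$ respectively. The target is to build a quasi-isomorphism between these two complexes of $\R$-modules that carries $\gr_q$-degree $j$ on the $N$-side to $\gr_q$-degree $j + 2i$ on the $Q$-side in homological degree $i$. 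The base case (a single four-valent vertex with all four edges exterior) is immediate: there $N_S = Q_S$, both complexes agree, and $\Tor_i$ vanishes for $i \geq 1$ by regularity of $\{L(p), Q(p)\}$.

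For the inductive step, choose a crossing $p$ with at least one outgoing exterior edge (guaranteed by Assumption~\ref{as} and the upward orientation) and let $S'$ be the partial braid graph obtained by deleting $p$. Then $L_S = L_{S'} + (L(p))$ and $Q_S = Q_{S'} + (Q(p))$, with each new generator regular modulo the old ones by Lemma~\ref{lem:qregular}. The resulting short exact sequences on $\L$-tensor complexes relate the $S$-complexes to their $S'$-analogues, and the inductive hypothesis on $S'$ reduces the problem to a local comparison at $p$. The main obstacle --- which Manolescu himself flags --- is managing $N_S$, whose ``global'' generators $N(W)$ with $|W| \geq 2$ do not localize to $p$ in any obvious way. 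Theorem~\ref{thm:2} applied to sub-braid-graphs of $S$ shows that each such generator is a sum of $Q$- and $L$-terms modulo $L_S$, but this identity must be lifted to the chain level to drive the induction. My best guess is to use a cellular resolution of $\R/N_S$ indexed by the poset of collections $W \subseteq v(S)$, filter by $|W|$, and argue that the resulting spectral sequence collapses at $E_2$ by repeated applications of Theorem~\ref{thm:2}.

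Once the chain-level identification is in place, the $\gr_q$-shift by $2i$ follows from a direct comparison of two ways to compute $\Tor_i(\R/L_S, \R/Q_S)$. Using the Koszul resolution of $\R/Q_S$ (available by Lemma~\ref{lem:qregular}), one sees that this $\Tor$ group is supported in $\gr_q$-degree $\geq 4i$, since each quadratic generator $Q(p)$ contributes $\gr_q$-degree $4$. By contrast, computing $\Tor_i(\R/L_S, \R/N_S)$ via $\L \otimes \R/N_S$ only forces support in $\gr_q$-degree $\geq 2i$, because the generators $N(W)$ need not all have degree $4$. The $2i$ discrepancy between these two floors is precisely the shift predicted by the conjecture, and should emerge naturally from the chain-map construction. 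This grading mismatch is also the combinatorial shadow of the fact that in the HOMFLY-PT setup of \cite{KR2}, \cite{RasmussenD}, the differential $d_+$ increases $\gr_q$ by $2$ at each vertex.
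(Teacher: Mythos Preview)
The statement you are attempting to prove is Conjecture~\ref{conj3}, which the paper explicitly leaves open. There is no proof in the paper to compare against; the paper only offers partial evidence (Theorem~\ref{thm:2} for $i=0$, the vanishing results in Subsection~\ref{sec:acyclic}, and the computer checks of \eqref{eq:hilbert}). So your proposal is not an alternative route to a known result but an attempt on an open problem, and should be evaluated on those terms.

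As written, the proposal is a strategy rather than a proof, and the gap is exactly where you yourself flag it. The induction on $|c(S)|$ works cleanly for $L_S$ and $Q_S$ because those ideals are local: deleting a vertex $p$ gives $L_S = L_{S'} + (L(p))$ and $Q_S = Q_{S'} + (Q(p))$, and regularity (Lemma~\ref{lem:qregular}) lets you run Koszul-type short exact sequences. But $N_S$ is not $N_{S'}$ plus anything local at $p$: every subset $W \ni p$ contributes a new generator $N(W)$, and these are genuinely global. Your suggestion to use a cellular resolution of $\R/N_S$ indexed by subsets $W \subseteq v(S)$ and a filtration by $|W|$ is not fleshed out --- you would need to specify the resolution, identify the $E_1$ page, and actually prove collapse. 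Invoking Theorem~\ref{thm:2} ``repeatedly'' does not obviously do this: that theorem controls $\Tor_0$ (it says $L+N=L+Q$), and gives no direct leverage on the higher differentials of a spectral sequence computing $\Tor_i$ for $i>0$. The phrase ``my best guess'' is an honest signal that this step is missing.

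The grading paragraph is also not a proof. Observing that the Koszul resolution of $\R/Q_S$ forces $\Tor_i(\R/L_S,\R/Q_S)$ into $\gr_q \geq 4i$, while $\L \otimes \R/N_S$ only forces $\Tor_i(\R/L_S,\R/N_S)$ into $\gr_q \geq 2i$, shows the two floors differ by $2i$ --- but a difference of lower bounds is not a grading shift of an isomorphism. At best this is a consistency check (and indeed it matches the Hilbert-series evidence \eqref{eq:hilbert}); it does not establish that any particular isomorphism carries degree $j$ to degree $j+2i$. That would have to come from the chain map itself, which you have not constructed.

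In short: the inductive framework for $L$ and $Q$ is reasonable and parallels the paper's own use of regularity, but the heart of the conjecture is precisely the passage from $Q_S$ to $N_S$ at the level of higher $\Tor$, and your proposal does not supply that step.
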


\subsection {Failure of the obvious maps}
\label{sec:fail}
Note that $Q \subseteq N,$ since every $Q(p)$ equals either $N(\{p\})$ (in case there is no loop in $E$ from $p$ to itself), or $N(\{ p \}) U_e,$ if there is such a loop $e.$ Hence, there is a natural quotient map $\R/Q \to \R/N$ which induces natural maps
\begin {equation}
\label {eq:fi}
 f_i :   \Tor_i(\R/L, \R/Q) \to \Tor_i(\R/L, \R/N).
\end {equation}

However, in general the maps $f_i$ are not the desired isomorphisms from \eqref{eq:tors}. Indeed, this would not be consistent with the proposed grading identification from Conjecture~\ref{conj3}. More concretely, as an example, consider the partial braid graph from Figure~\ref{fig:ox}. Then:
$$ \R = \zz[U_1, U_2, U_3], \ L = N = (U_1 - U_2), \ Q = (U_1 U_3 - U_2 U_3).$$

\begin {figure}
\begin {center}
\input {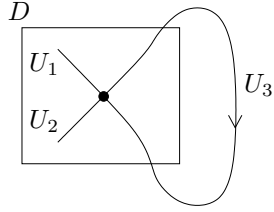}
\caption {A partial braid graph with one crossing. We write the corresponding $U$ variable on each edge.}
\label {fig:ox}
\end {center}
\end {figure}

\noindent Both $\Tor_1(\R/L, \R/Q)$ and $\Tor_1(\R/L, \R/N)$ are isomorphic to $\R/(U_1 - U_2),$ as can be seen by tensoring the Koszul resolution 
$$\L(p) = \Bigl( \R \xrightarrow{U_1 - U_2} \R \Bigr)$$ with $\R/Q$ resp. $\R/N$, and then taking homology.  However, under the natural isomorphisms of the $\Tor$ groups with $\R/(U_1 - U_2),$ the map $f_1$ corresponds to multiplication by $\pm U_3$, which is not an isomorphism.

\subsection {Two-valent vertices}
\label {sec:no2}
Let $S$ be a partial braid graph, and $S'$ be the graph obtained from $S$ by inserting a new two-valent vertex $p$ on an edge $a$. (Compare Subsection~\ref{sec:insert}. Going from $S'$ to $S$ is the operation of mark removal, discussed in \cite[Lemma 3]{KR2} and \cite[Section 2.2]{RasmussenD}.) In $S'$, we keep the notation $a$ for the outgoing edge from $p$, and we let $b$ the incoming edge at $p$, as in Figure~\ref{fig:insert}. 

The base ring $\R'$ for $S'$ contains the variables $U_a$ and $U_b$. It is related to the base ring $\R$ for $S$ by the relation 
$$ \R' = \R/(U_a - U_b).$$

We denote by $L', N', Q'$ the ideals in $\R'$ analogous to $L, N, Q$ in $\R$.

\begin {lemma}
\label {lem:torres}
If $S'$ is obtained from $S$ by inserting a two-valent vertex as above, then:

$(a)$ We have isomorphisms of $\R'$-modules
$$ \Tor^{\R}_i (\R/L, \R/N) \cong \Tor^{\R'}_i (\R'/ L', \R'/N')$$
and
$$ \Tor^{\R}_i (\R/L, \R/Q) \cong \Tor^{\R'}_i (\R'/ L', \R'/Q').$$
Here,  the superscripts $\R$ and $\R'$ indicate the base ring for the $\Tor$ groups, and an $\R$-module is viewed as an $\R'$-module with $U_a$ and $U_b$ acting the same way.

$(b)$ The statement $L+Q = L+N$ is equivalent to the statement $L'+Q' = L'+N'.$
\end {lemma}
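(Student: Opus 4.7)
The plan is to handle part (a) via a change-of-rings argument, and to deduce part (b) directly from the fact that both $N'$ and $Q'$ contain the element $Q(p) = N_{S'}(\{p\}) = U_a - U_b$. I would first observe that the new two-valent vertex $p$ contributes $U_a - U_b$ to both $N'$ and $Q'$, and that modulo $(U_a - U_b)$ the ideals $L', N', Q'$ reduce to $L, N, Q$ respectively under the identification $\R \cong \R'/(U_a - U_b)$. Consequently, the quotient map $\R' \twoheadrightarrow \R$ induces canonical $\R'$-module isomorphisms $\R'/N' \cong \R/N$ and $\R'/Q' \cong \R/Q$, where the right-hand sides are regarded as $\R'$-modules through the quotient. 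In contrast, $\R'/L'$ does not collapse to $\R/L$ in this way; rather, $\R/L = (\R'/L') \otimes_{\R'} \R$.

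For (a), I would use the short exact sequence
\[ 0 \longrightarrow \R' \xrightarrow{\, U_a - U_b \,} \R' \longrightarrow \R \longrightarrow 0 \]
as a free $\R'$-resolution of $\R$. The crucial sub-claim is that $U_a - U_b$ acts as a non-zero-divisor on $\R'/L'$; granting this, $\Tor^{\R'}_q(\R'/L', \R) = 0$ for all $q \geq 1$, so tensoring any free $\R'$-resolution $F_\bullet \to \R'/L'$ with $\R$ yields a free $\R$-resolution of $\R/L$. Applying $-\otimes_{\R}(\R/N)$, which agrees with $-\otimes_{\R'}(\R/N)$ since the $\R'$-action on $\R/N$ factors through $\R$, and passing to homology, gives
\[ \Tor^{\R'}_i(\R'/L',\,\R'/N') \;\cong\; \Tor^{\R'}_i(\R'/L',\,\R/N) \;\cong\; \Tor^{\R}_i(\R/L,\,\R/N), \]
and the identical argument with $Q'$ replacing $N'$ delivers the second isomorphism.

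The main obstacle is the sub-claim that $U_a - U_b$ is a non-zero-divisor on $\R'/L'$. Since the $L(v)$'s form a regular sequence of linear forms and $\R'/L'$ is therefore a polynomial ring, it suffices to show that $U_a - U_b$ is linearly independent from $\{L(v) : v \in c(S')\}$ in the space of linear forms of $\R'$. I plan to argue by contradiction: if $U_a - U_b = \sum_v \lambda_v L(v)$, matching the coefficients of $U_a$ and $U_b$ forces $v_1, v_2 \in c(S')$ (where $v_1, v_2$ are the endpoints in $S$ of the edge split by $p$) with $\lambda_{v_1} = \lambda_{v_2} = -1$, while matching the coefficients of the remaining edges forces $\lambda_u = \lambda_w$ across every edge of $S' \setminus \{a,b\}$ joining two four-valent vertices, and $\lambda_u = 0$ whenever a four-valent vertex $u$ is adjacent to a loose end. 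Removing a single edge either preserves connectedness or splits the graph into exactly two components; in either case, an elementary parity count on edge-ends (each interior vertex contributes $4$, each loose end contributes $1$, and $v_1$ respectively $v_2$ has lost one edge-end to $e_*$) combined with Assumption~\ref{as} guarantees that the connected component of each $v_i$ in $S' \setminus \{a,b\}$ contains a loose end. This propagates $\lambda_{v_i} = 0$ through the component, contradicting $\lambda_{v_i} = -1$.

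Part (b) is formal and independent of (a). Since $U_a - U_b = Q(p) \in Q'$ and $U_a - U_b = N_{S'}(\{p\}) \in N'$, both $L' + Q'$ and $L' + N'$ contain $(U_a - U_b)$, and their images under the quotient $\R' \twoheadrightarrow \R'/(U_a - U_b) = \R$ are precisely $L + Q$ and $L + N$. Since two ideals of $\R'$ both containing a common ideal $I$ coincide iff their images in $\R'/I$ coincide, applying this with $I = (U_a - U_b)$ yields the equivalence $L' + Q' = L' + N' \Longleftrightarrow L + Q = L + N$.
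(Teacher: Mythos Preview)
Your argument is correct and follows essentially the same line as the paper's proof. Both rely on the fact that $U_a - U_b \in Q' \subseteq N'$, so that $\R'/N' \cong \R/N$ and $\R'/Q' \cong \R/Q$ as $\R'$-modules, together with the compatibility of the free (Koszul) resolutions of $\R'/L'$ and $\R/L$ under the base change $\R' \to \R$. The paper states this tersely as an isomorphism at the level of the complexes computing the $\Tor$ groups; you spell it out as a change-of-rings argument, isolating the key input $\Tor^{\R'}_{\geq 1}(\R'/L', \R) = 0$, i.e., that $U_a - U_b$ is a non-zero-divisor on $\R'/L'$.

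Your combinatorial proof of this sub-claim via coefficient matching and propagation to a loose end is valid; the boundary cases where $v_1$ or $v_2$ is not four-valent, or where the path to a loose end passes through two-valent vertices, yield an immediate contradiction by the same mechanism and deserve a sentence. A shorter route is available: a relation $U_a - U_b = \sum_v \lambda_v L'(v)$ in $\R'$ reduces modulo $(U_a - U_b)$ to $\sum_v \lambda_v L(v) = 0$ in $\R$, and the $L(v)$'s are already known to be linearly independent there (this is the regularity statement you invoke over $\R'$, transported to $\R$). Part (b) matches the paper's argument exactly.
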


\begin {proof}
(a) Note that $U_a - U_b \in Q' \subseteq N'$, and that under the projection $\R' \to \R,$ the ideals $L', N', Q'$ project to the corresponding ideals $L, N, Q$. We think of each $\Tor$ group as the homology of a complex obtained from a free resolution of $\R/L$ (or $\R'/L'$), by tensoring with a second module. The claimed isomorphisms on homology follow from corresponding isomorphisms at the level of these complexes.

(b) $L'+Q'=L'+N'$ implies the other statement using the projection $\R' \to \R.$ For the converse, suppose $L+Q = L+N$. Since $U_a - U_b \in L'+Q'$, we see that $L'+Q'$ is generated by the same 
elements as $L+Q$, together with $U_a - U_b$. Similarly, $L'+N'$ is generated by the same elements as $L+N$, together with $U_a - U_b$. Therefore, $L'+Q' = L'+N'.$ 
\end {proof}

In light of Lemma~\ref{lem:torres} (a), Conjecture~\ref{conj2} can be reduced to the case where there are no two-valent vertices.

\subsection {Vanishing results}
\label {sec:acyclic}

A case in which Conjecture~\ref{conj2} is easy to prove is when the open partial braid graph $\Gamma$ does not intersect the top and bottom edges of the rectangle $D$; that is, taking its braid closure is a vacuous operation, and $\hat \Gamma = \Gamma.$ We have:

\begin {proposition}
Suppose $\hat \Gamma = \Gamma.$ Then:

(a) The ideals $N$ and $Q$ coincide, so $\Tor_i(\R/L, \R/N) = \Tor_i(\R/L, \R/Q)$ for all $i$.

(b) In fact, $ \Tor_i(\R/L, \R/Q) = 0 \ \text{ for } i > 0.$
\end {proposition}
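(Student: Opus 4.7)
For part (a), the inclusion $Q \subseteq N$ is immediate: since $\hat\Gamma = \Gamma$ is acyclic as a directed graph, no vertex carries a self-edge, so at every $p \in v(S)$ the generator $Q(p)$ literally equals $N(\{p\})$. For the reverse inclusion $N \subseteq Q$, I plan an induction on $|W'|$, the base case $|W'|=1$ being handled by the same observation. For the inductive step, acyclicity lets us pick a vertex $p \in W'$ that is topologically last among the elements of $W'$, meaning $\Outof(p) \cap \Into(W' \setminus \{p\}) = \emptyset$. Writing $W'' = W' \setminus \{p\}$ and $I' = \Outof(W'') \cap \Into(p)$ for the set of edges from $W''$ into $p$, a careful accounting using the topological-lastness gives the disjoint decompositions $\Out(W') = (\Out(W'') \setminus I') \sqcup \Outof(p)$ and $\In(W') = \In(W'') \sqcup (\Into(p) \setminus I')$; a one-line algebraic manipulation then yields the identity
\[
N(W') \;=\; \Bigl(\prod_{e \in \Out(W'') \setminus I'} U_e\Bigr) \, Q(p) \;+\; \Bigl(\prod_{e \in \Into(p) \setminus I'} U_e\Bigr) \, N(W'').
\]
Both summands lie in $Q$ (the second by induction), completing the proof of (a).

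For part (b), Lemma~\ref{lem:qregular} (applicable by Assumption~\ref{as}) guarantees that $\{Q(p) : p \in v(S)\}$ is a regular sequence in $\R$, so the Koszul complex $\Koszul\{Q(p)\}$ is a free $\R$-resolution of $\R/Q$, and
\[
\Tor_i^\R(\R/L, \R/Q) \;\cong\; H_i\bigl(\Koszul\{Q(p)\} \otimes_\R \R/L\bigr).
\]
Vanishing for $i > 0$ is therefore equivalent to the images of the $Q(p)$ forming a regular sequence in $\R/L$. To obtain this I would first verify that the linear forms $\{L(p) : p \in c(S)\}$ are linearly independent in $\R$ by a topological-sort argument: any topmost crossing has at least one outgoing edge that ends at a loose end (using that $\hat\Gamma = \Gamma$ and that strands are strictly monotone in the vertical direction), so its coefficient must vanish in any linear relation, and one proceeds downwards. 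Consequently $\R/L$ is a polynomial ring, in particular Cohen--Macaulay of dimension $|E| - |c(S)|$. In this setting $\{Q(p)\}$ is regular in $\R/L$ as soon as it has the expected codimension, that is, $\dim \R/(L+Q) = n$, where $n$ is the number of arcs of $\Gamma$.

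The main obstacle is this last dimension count. The variety $V(L+Q)$ decomposes as a union of linear subvarieties indexed by \emph{matching assignments} that pick, at each four-valent vertex, either the identity match ($U_a = U_c,\ U_b = U_d$) or the swap match ($U_a = U_d,\ U_b = U_c$), together with the forced identifications $U_e = U_f$ at two-valent vertices. Each such subvariety has dimension equal to the number of resulting variable-equivalence classes, and I would argue that this number is at most $n$, with equality only for the arc-consistent matching -- every swap at a crossing of two arcs $\gamma_i, \gamma_j$ strictly merges the arc-classes of $\gamma_i$ and $\gamma_j$. This yields $\dim \R/(L+Q) = n$, and the regularity of $\{Q(p)\}$ in $\R/L$ follows from the codimension characterization of regular sequences in Cohen--Macaulay rings, giving $\Tor_i = 0$ for $i > 0$. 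Modulo this combinatorial bookkeeping (and the routine reduction to the two-valent-free case via Lemma~\ref{lem:torres}, should it be desired), the proposition follows.
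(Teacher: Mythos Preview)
Your proof of part~(a) is correct and considerably more explicit than the paper's, which simply refers to \cite[Lemma~3.12]{CubeResolutions}. The identity you write down is valid: the two middle terms in the expansion cancel because $\Out(W'') = (\Out(W'')\setminus I') \sqcup I'$ and $\Into(p) = (\Into(p)\setminus I') \sqcup I'$, so both products acquire the same factor $\prod_{e\in I'} U_e$.

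For part~(b), the paper again just cites an external source (\cite[Lemma~1]{KhSoergel}) for the regularity of the combined sequence $\{L(p)\}\cup\{Q(p)\}$ in $\R$. Your codimension strategy is a legitimate alternative route, and the decomposition of $V(L+Q)$ into linear subspaces indexed by matchings is correct (modulo $L$ one has $Q(p) = -(U_a-U_c)(U_a-U_d)$ at each four-valent $p$). However, your combinatorial claim that ``every swap at a crossing of two arcs $\gamma_i,\gamma_j$ strictly merges the arc-classes of $\gamma_i$ and $\gamma_j$'' is not right: a single swap does not merge two classes into one, it cuts each of $\gamma_i,\gamma_j$ at the crossing and re-pairs the four pieces into two new paths, leaving the count unchanged. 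In fact \emph{every} matching gives exactly $n$ equivalence classes, not just the arc-consistent one. The clean way to see this is that any matching defines, at each vertex, a bijection from incoming to outgoing edges; following these bijections traces out oriented paths that are monotone in the vertical coordinate (since $\hat\Gamma=\Gamma$ is acyclic), hence never close up into cycles. Each such path begins at an edge whose tail is a loose end and terminates at an edge whose head is a loose end, and there are $n$ of each, so exactly $n$ paths. With this correction your dimension count $\dim\R/(L+Q)=n$ goes through, and the Cohen--Macaulay argument finishes the job. So your approach works, but the key combinatorial step is simpler (and slightly different) than you stated.
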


\begin {proof}
(a) In this situation all the generators $N(W') \in N$  are in the ideal $Q$; compare \cite[Lemma 3.12]{CubeResolutions}. This can be proved by induction on the number of elements in $W'$: For the inductive step, notice that if we let $p$ be the topmost vertex in $W'$, then $N(W')$ is in the ideal $(Q(p)) + N(W' \setminus \{p\})$.

(b) This follows from the fact that the generators $L(p), Q(p)$ of $L$ and $Q$ form a regular sequence in $\R$; see \cite[Lemma 1]{KhSoergel} for the proof.
\end {proof}

A related result is the following:

\begin {lemma}
\label {lem:bound}
Let $\Gamma$ be any open partial braid graph, with a connected braid closure $\hat \Gamma$  obtained by closing up $k$ strands. Then
$$\Tor_i(\R/L, \R/Q) = 0 \ \text{ for } i > k.$$
\end {lemma}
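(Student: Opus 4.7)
The plan is to express $\Tor_i^\R(\R/L, \R/Q)$ as Koszul homology and bound its support using the Cohen--Macaulay property of $\R/Q$. Since $\{L(p) : p \in c(\hat\Gamma)\}$ is a regular sequence in $\R$ (as in the proof of Proposition~\ref{prop:hfs}), the Koszul complex $\L = \bigotimes_{p \in c(\hat\Gamma)}\bigl(\R \xrightarrow{L(p)} \R\bigr)$ is a free resolution of $\R/L$, and consequently $\Tor_i^\R(\R/L, \R/Q) \cong H_i(\L \otimes_\R \R/Q)$ is the $i$-th Koszul homology of the length-$n_4$ sequence $\{\bar L(p)\}$ acting on $\R/Q$, where $n_4 = |c(\hat\Gamma)|$. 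By Lemma~\ref{lem:qregular} the elements $\{Q(p) : p \in W\}$ also form a regular sequence, so $\R/Q$ is Cohen--Macaulay; using $|W| = n_4 + n_2$ and the half-edge count $2|E| = 4n_4 + 2n_2 + l$ (with $l$ the number of loose ends) one finds that the relative dimensions satisfy $\dim \R/Q - \dim \R/(L+Q) = (n_4 + l/2) - \dim \R/(L+Q)$. By the standard relation between grade and the top non-vanishing degree of Koszul homology, and the equality $\mathrm{grade}(\bar L, \R/Q) = \dim \R/Q - \dim \R/(L+Q)$ in the Cohen--Macaulay setting, the lemma reduces to establishing the dimension estimate
$$\dim \R/(L+Q) \;\leq\; k + l/2,$$
which will force $\mathrm{grade}(\bar L, \R/Q) \geq n_4 - k$ and hence the vanishing $\Tor_i = 0$ for $i > k$.

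To prove this estimate, I would decompose the scheme $V = \mathrm{Spec}(\R/(L+Q))$ as a union of linear subspaces. At each four-valent vertex $p$ with incoming edges $c, d$ and outgoing edges $a, b$, the two equations $U_a + U_b = U_c + U_d$ and $U_a U_b = U_c U_d$ together encode the multiset identity $\{U_a, U_b\} = \{U_c, U_d\}$, which in turn is the union of the two linear subspaces $\{U_a = U_c,\, U_b = U_d\}$ and $\{U_a = U_d,\, U_b = U_c\}$. At each two-valent vertex the relation $Q(p) = 0$ is already linear, $U_e = U_f$. Thus $V = \bigcup_\xi V_\xi$ indexed by the $2^{n_4}$ choices $\xi$ of a pairing at each four-valent vertex, and each $V_\xi$ is cut out by linear equations that merely identify all edge-variables lying on a common strand of the smoothed $1$-manifold $\hat\Gamma_\xi$; consequently $\dim V_\xi$ equals the number of connected components of $\hat\Gamma_\xi$.

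The $1$-manifold $\hat\Gamma_\xi$ is a disjoint union of intervals and circles, and since its only degree-$1$ points are the $l$ loose ends, there are exactly $l/2$ intervals and some number $c_\xi$ of circles; this gives $\dim V_\xi = l/2 + c_\xi$. The key geometric input is that the arcs of the open partial braid graph $\Gamma$ are strictly upward-monotone in the second coordinate, so any smoothed strand staying inside $D$ is itself monotone and cannot close up; every circle of $\hat\Gamma_\xi$ must therefore traverse at least one of the $k$ closure strands on the right. To make this precise, I would define a partial self-map $\sigma_\xi$ on $\{1, \dots, k\}$ by starting at the bottom endpoint of the $i$-th closure strand, following the smoothed curve upward through $\Gamma$, and recording the top endpoint reached, if any, before terminating at an out-type loose end. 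The circles of $\hat\Gamma_\xi$ are in bijection with the cycles of $\sigma_\xi$, and since distinct cycles use disjoint elements of $\{1, \dots, k\}$, one obtains $c_\xi \leq k$; this yields $\dim V_\xi \leq l/2 + k$ for every $\xi$, hence the required bound on $\dim \R/(L+Q)$. The hard part will be this last step: making the smoothed-graph decomposition rigorous and exploiting the upward monotonicity of arcs in $\Gamma$ to rule out ``extra'' circles and secure the tight bound $c_\xi \leq k$.
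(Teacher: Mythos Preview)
Your argument is correct but follows a genuinely different route from the paper's. The paper proceeds by first invoking Lemma~\ref{lem:torres}(a) to insert one two-valent vertex on each of the $k$ closure strands, and then observes that among all the Koszul generators $\{L(p)\}\cup\{Q(p)\}$, everything \emph{except} the $k$ new linear relations $U_{a_j}-U_{b_j}$ forms a regular sequence (these being exactly the $L$- and $Q$-generators for an \emph{open} partial braid graph, where regularity is known from \cite[Lemma~1]{KhSoergel}); hence the full Koszul complex is quasi-isomorphic to $\Koszul\{U_{a_j}-U_{b_j}:j=1,\dots,k\}\otimes \R/(L'+Q')$, which is visibly supported in homological degrees $\leq k$. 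Your approach instead bounds $\sup\{i:H_i\neq 0\}$ via the Cohen--Macaulay identity $\mathrm{grade}=\mathrm{codim}$ on $\R/Q$, reducing to the dimension estimate $\dim\R/(L+Q)\leq k+l/2$, which you then prove by the attractive geometric decomposition $V(L+Q)=\bigcup_\xi V_\xi$ into linear subspaces indexed by smoothings, together with the monotonicity argument bounding the number of closed components by $k$. This buys you a concrete picture of the components of $V(L+Q)$, at the cost of invoking more commutative algebra (the grade--height equality for Cohen--Macaulay rings, and equidimensionality of $\R/Q$ to convert height to codimension; these hold here, but over $\zz$ rather than a field they require a moment's care). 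The paper's proof is shorter and sidesteps this machinery by exhibiting the short complex directly. One small comment: what you flag as the ``hard part''---the bound $c_\xi\leq k$---is in fact immediate from the monotonicity you describe; the real work in your approach is the commutative-algebra reduction, not the combinatorics.
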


\begin {proof}
By Lemma~\ref{lem:torres} (a), we can assume without loss of generality that each of the strands used to take the braid closure has a two-valent vertex just before the top edge $[0,1] \times \{1\}$ in $D$. Let $a_j$ (resp. $b_j$) the outgoing (resp. incoming) edge at these two-valent vertices, for $j=1, \dots, k$.

The proof of Theorem~\ref{prop:kr} from Section~\ref{sec:kr} extends to (connected) partial braid graphs, implying that $\Tor_*(\R/L, \R/Q)$ is the homology of the Koszul complex 
\begin {equation}
\label {eq:koz}
\Koszul \bigl( \{L(p) \mid  p \in c(\hat \Gamma)  \} \cup \{Q(p) \mid p \in W  \} \bigr ),
\end {equation}
in the notation of Section~\ref{sec:kr}. Among the generators $Q(p)$ we find $U_{a_j} - U_{b_j}, j=1, \dots, k$. If we eliminate these, the rest of the $Q(p)$'s are the generators of the quadratic ideal $Q'$ for the open partial braid $\Gamma'$, obtained from $\Gamma$ by removing the $k$ two-valent vertices at the top. Also, the generators $L(p)$ for the linear ideal $L$ are the same as those for the similar ideal $L'$ for $\Gamma'$. By  \cite[Lemma 1]{KhSoergel}, the generators of $Q'$ and $L'$ form a regular sequence. Therefore, the Koszul complex \eqref{eq:koz} is quasi-isomorphic to
$$ \Koszul(\{ U_{a_j} - U_{b_j} \mid j =1, \dots, k \} ) \otimes \R/(Q'+L').$$
This complex is only supported in degrees up to $k$, hence so is its homology.
\end {proof}

\subsection {Proof of Theorem~\ref{thm:2}}
Let $S$ be any partial braid graph. We want to show that the ideals $L+Q , L+N \subseteq \R$ are the same, so that when $i=0$ the map $f_0$ from \eqref{eq:fi} is the desired isomorphism in Conjecture~\ref{conj2}. The fact that $L+Q = L+N$ will follow readily from Proposition~\ref{main} below. Indeed, given a subset $W' \subseteq W$, let $S'$ be the partial braid graph consisting of all the vertices in $W'$, together with all the edges in $\Outof(W') \cup \Into(W')$. Applying Proposition~\ref{main} to $S'$ (or, if $S'$ is disconnected, to its connected components), we get that $N(W') \in L+Q$. This shows that $N \subseteq L+Q,$ which directly implies $L+Q = L+N$.

\begin {proposition}
\label {main}
The element 
$$ N(W) = \prod_{e \in \eout} U_e - \prod_{e \in \ein} U_e \in \R $$
lies in the ideal $L+Q.$
\end {proposition}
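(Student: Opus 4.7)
The plan is to induct on $|W|$, the number of interior vertices of $S$. The base case $|W|=1$ is immediate: for a 2-valent $p$, $N(\{p\}) = U_e - U_f = Q(p)$; for a 4-valent $p$ with four distinct incident edges, $N(\{p\}) = U_a U_b - U_c U_d = Q(p)$; and for a 4-valent $p$ with a loop $\ell$, the variable $U_\ell$ cancels from both $\Out(\{p\})$ and $\In(\{p\})$, so $N(\{p\}) = U_e - U_f = L(p) \in L$.

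For the inductive step I would pick a vertex $q \in W$ incident to at least one exterior edge of $S$---such $q$ exists since $S$ is connected and has exterior edges. Let $W' = W \setminus \{q\}$ and let $S'$ be the partial braid graph on $W'$ obtained from $S$ by cutting open the edges at $q$. Each connected component of $S'$ is a connected partial braid graph with at least one exterior edge (inherited from $S$ or produced by the cut), so the inductive hypothesis applies, and a telescoping expansion over the components yields $N(W') \in L + Q$. In the simplest case, when $q$ has a single interior edge $e$ connecting to $W'$ (say $e \in \Out(W') \cap \In(\{q\})$), one verifies directly in $\R$ the polynomial identity
\begin{equation*}
N(W) = \Bigl(\prod_{\substack{e' \in \In(\{q\}) \\ e' \neq e}} U_{e'}\Bigr) N(W') + \Bigl(\prod_{\substack{e' \in \Out(W') \\ e' \neq e}} U_{e'}\Bigr) Q(q),
\end{equation*}
immediately giving $N(W) \in L + Q$.

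When $q$ is attached to $W'$ by more than one interior edge, the analogue of the identity above only holds modulo $L$ and requires additional bookkeeping. The prototypical case is a ``bigon'', where $q$ is joined to some $p \in W'$ by two edges $e_1, e_2$ in opposite directions. Using $L(q)$ to substitute $U_a - U_{e_1} \equiv U_h - U_{e_2} \pmod{L}$ (for the two exterior edges $a, h$ at $q$), a short calculation gives $N(W) \equiv N(W') + (U_h - U_{e_2})(A - B) \pmod{L}$, where $A - B$ equals $N$ of the smaller partial braid graph on $W' \cup \{q_0\}$ obtained by inserting a new 2-valent vertex $q_0$ that joins $e_1$ to $e_2$. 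The inductive hypothesis applied to this auxiliary graph gives $A - B \in L + Q + (U_{e_1} - U_{e_2})$, and since $(U_h - U_{e_2})(U_{e_1} - U_{e_2})$ coincides up to sign with $Q(q)$ modulo $L$, the residual is absorbed into $L + Q$. The main obstacle is to extend this case-by-case reasoning uniformly across all configurations of interior edges at $q$---multiple parallel interior edges, loops at $q$, and mixed types---and to verify that the auxiliary partial braid graphs arising in the reductions satisfy the connectedness and exterior-edge assumptions needed to invoke the hypothesis. The few degenerate cases (e.g., when $S$ has only two exterior edges, both at $q$) collapse the residual term to $A - B = 0$ identically.
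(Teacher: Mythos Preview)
Your proposal is an incomplete sketch rather than a proof, and its inductive strategy has a structural gap even in the one nontrivial case you attempt.  In the bigon reduction you pass to an auxiliary graph on $W'\cup\{q_0\}$ and invoke ``the inductive hypothesis,'' but $|W'\cup\{q_0\}|=|W|$, so the induction on $|W|$ does not apply.  One could try to repair this by inducting on a finer quantity (say, the number of four-valent vertices, or the number of interior edges at the chosen $q$), but you have not set this up, and the remaining configurations---two or three interior edges at a four-valent $q$, not forming a bigon---are neither analyzed nor shown to reduce to anything already handled.  Your own sentence ``The main obstacle is to extend this case-by-case reasoning uniformly'' is an accurate admission that the argument is unfinished.

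The paper avoids induction entirely and instead produces an explicit closed-form witness that $N(W)\in L+Q$.  The idea is global and uses the braid structure: slice the rectangle $D$ by horizontal lines $\ell_0,\dots,\ell_m$ separating the crossings, and record at each level $\ell_i$ the multiset $F_i$ of edge-variables crossing that line (extending exterior edges by dashed verticals so every slice sees $n+k$ strands).  Passing from $\ell_{i-1}$ to $\ell_i$ swaps the two incoming edge-variables at $p_i$ for the two outgoing ones, so for each $j$,
\[
\ss_{n-j}(F_i)-\ss_{n-j}(F_{i-1})
  = L(p_i)\,\ss_{n-1-j}(G_i)+Q(p_i)\,\ss_{n-2-j}(G_i),
\]
where $G_i$ is the common complement.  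Summing telescopically over $i$ and then summing against $(-1)^j\hh_j(B_1,\dots,B_k)$ in $j$, a symmetric-function identity (the standard inverse relation between elementary and complete symmetric polynomials) strips off the $k$ closing-strand variables $B_1,\dots,B_k$, leaving exactly $\ss_n(\eout)-\ss_n(\ein)=N(W)$.  This yields a single identity expressing $N(W)$ as an $\R$-linear combination of the $L(p_i)$ and $Q(p_i)$, with no case analysis and no auxiliary graphs.  If you want to pursue your inductive line, the paper's slice-by-slice telescoping suggests what the right bookkeeping should be: rather than removing one vertex and tracking how many interior edges it has, track how the elementary symmetric functions in the slice variables change across a crossing.
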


Before embarking on the proof, we present a few useful results from 
the theory of symmetric functions.

Given variables $y_1, \dots, y_m,$ the corresponding {\it elementary 
symmetric polynomials} are
$$ \ss_k (y_1, \dots, y_m) = \sum_{1 \leq i_1 < \dots < i_k \leq m} 
y_{i_1} y_{i_2} \dots y_{i_k}.$$

We also consider the {\it complete homogeneous symmetric polynomials}:
$$ \hh_k (y_1, \dots, y_m) = \sum_{1 \leq i_1 \leq \dots \leq i_k 
\leq m} y_{i_1} y_{i_2} \dots y_{i_k}.$$

We set formally $\ss_0(y_1, \dots, y_m) = \hh_0(y_1, \dots, y_m) =1,$ 
and $\ss_k(y_1, \dots, y_m) = \hh_k(y_1, \dots, y_m)  = 0$ for $k$ 
negative. Observe that $\ss_k  (y_1, \dots, y_m) = 0$ for $k > m.$

\begin {lemma}
\label {one}
For any $n \geq 1,$ we have
$$ \sum_{k+l=n} (-1)^l \ss_k(y_1, \dots, y_m) \hh_l(y_1, \dots, y_m) 
= 0.$$
\end {lemma}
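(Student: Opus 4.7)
The plan is to establish this classical identity by a short generating function argument in the formal power series ring $\zz[y_1, \dots, y_m][[t]]$. First I would introduce the standard generating series
\begin{equation*}
E(t) = \prod_{i=1}^m (1 + y_i t) = \sum_{k \geq 0} \ss_k(y_1, \dots, y_m)\, t^k
\end{equation*}
and
\begin{equation*}
H(t) = \prod_{i=1}^m \frac{1}{1 - y_i t} = \sum_{l \geq 0} \hh_l(y_1, \dots, y_m)\, t^l,
\end{equation*}
whose coefficients are the elementary and complete homogeneous symmetric polynomials, respectively. These expansions are immediate from grouping monomials by degree.

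The key observation is that
\begin{equation*}
E(t) \cdot H(-t) \;=\; \prod_{i=1}^m \frac{1 + y_i t}{1 + y_i t} \;=\; 1.
\end{equation*}
Writing out $H(-t) = \sum_{l \geq 0} (-1)^l \hh_l(y_1, \dots, y_m)\, t^l$ and multiplying the series, the coefficient of $t^n$ on the left-hand side is exactly
\begin{equation*}
\sum_{k+l=n} (-1)^l \ss_k(y_1, \dots, y_m)\, \hh_l(y_1, \dots, y_m).
\end{equation*}
Since the product equals the constant series $1$, this coefficient vanishes for every $n \geq 1$, which is the desired identity.

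The argument is purely formal and involves no convergence issues. There is no substantive obstacle; I include this lemma only because the identity will serve as a computational tool in the proof of Proposition \ref{main}, where sums of the form $\sum (-1)^l \ss_k \hh_l$ naturally appear when expanding products indexed by incoming and outgoing edges.
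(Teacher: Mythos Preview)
Your proof is correct. It is the standard generating-function argument: the identity $E(t)H(-t)=1$ in $\zz[y_1,\dots,y_m][[t]]$ immediately yields the vanishing of the $t^n$ coefficient for $n\geq 1$.

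The paper takes a different, more hands-on route: it fixes a monomial $y_{i_1}^{r_1}\cdots y_{i_s}^{r_s}$ of degree $n$ and counts directly that it appears in $\ss_k\,\hh_l$ exactly $\binom{s}{k}$ times (choosing which $k$ of the $s$ variables in its support contribute the squarefree factor), so its total coefficient in the alternating sum is $\sum_{k=0}^{s}(-1)^{n-k}\binom{s}{k}=0$. Your generating-function argument is cleaner and more conceptual, and it is the one typically found in textbooks on symmetric functions; the paper's monomial count is essentially the same cancellation unpacked termwise, avoiding formal power series at the cost of a small bijective verification. Either way the lemma is elementary, and both proofs are adequate for its role as input to Lemma~\ref{two} and Proposition~\ref{main}.
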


\begin {proof}
For some indices $1 \leq i_1 < \dots < i_s \leq m$ and exponents $r_1, 
\dots, r_s > 0$ such that $\sum r_j =n$, the monomial $y_{i_1}^{r_1} \dots y_{i_s}^{r_s}$ 
appears in the term 
$$ \ss_k(y_1, \dots, y_m) \hh_l(y_1, \dots, y_m) $$
exactly $\binom{s}{k}$ times. In the alternating sum of these terms 
which appears in the statement of the lemma, the coefficient of this 
monomial is therefore:
$$ \sum_{k=0}^s (-1)^{n-k} \binom{s}{k} = 0.$$ \end {proof}

\begin {lemma}
\label {two}
For variables $y_1, \dots, y_n; z_1, \dots, z_m,$ we have
\begin {equation}
\label {eq:2}
 \sum_{k+l=n} (-1)^l \ss_k(y_1, \dots, y_n, z_1, \dots, z_m) 
\hh_l(z_1, \dots, z_m)
= \ss_n (y_1, \dots, y_n).
\end {equation}
\end {lemma}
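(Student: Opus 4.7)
The plan is to reduce Lemma~\ref{two} to Lemma~\ref{one} by exploiting the way elementary symmetric polynomials split over a disjoint union of variable sets. Specifically, I would use the standard identity
$$ \ss_k(y_1, \dots, y_n, z_1, \dots, z_m) = \sum_{a+b=k} \ss_a(y_1, \dots, y_n) \, \ss_b(z_1, \dots, z_m), $$
which is immediate from the generating function factorization $\prod_i (1+y_i t)\prod_j(1+z_j t)$, or from the obvious bijection on subsets.

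Substituting this expression into the left-hand side of \eqref{eq:2} and swapping the order of summation converts it into
$$ \sum_{a=0}^{n} \ss_a(y_1, \dots, y_n) \Bigl( \sum_{b+l=n-a} (-1)^l \ss_b(z_1, \dots, z_m) \, \hh_l(z_1, \dots, z_m) \Bigr). $$
For every $a$ with $0 \le a < n$ we have $n-a \ge 1$, so Lemma~\ref{one}, applied to the variables $z_1,\dots,z_m$, forces the inner sum to vanish. The only surviving term is $a=n$, for which the inner sum reduces to the single contribution $\ss_0(z) \hh_0(z) = 1$. This leaves exactly $\ss_n(y_1, \dots, y_n)$ on the right, which is the desired identity.

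There is essentially no serious obstacle here: the whole argument is a one-line reorganization of the sum followed by an invocation of Lemma~\ref{one}. The only thing to double-check is the boundary case $n=0$, where both sides equal $1$, and the fact that Lemma~\ref{one} is stated with the variables $y_i$ but the proof is formal in the variables and applies verbatim with the $z_j$'s instead. An alternative (equivalent) route is purely generating-functional: multiply the generating series $\sum_k \ss_k(y,z) t^k = \prod_i (1+y_i t)\prod_j (1+z_j t)$ by $\sum_l (-1)^l \hh_l(z) t^l = \prod_j (1+z_j t)^{-1}$ to get $\prod_i (1+y_i t)$, and read off the coefficient of $t^n$; but the combinatorial route above is cleaner because it reuses Lemma~\ref{one} directly.
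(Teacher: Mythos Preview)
Your proof is correct and is essentially identical to the paper's own argument: both use the splitting $\ss_k(y,z)=\sum_{i+j=k}\ss_i(y)\ss_j(z)$, reorder the double sum, and then invoke Lemma~\ref{one} on the $z$-variables to kill every term except $i=n$. The only difference is cosmetic (your indices $a,b$ versus the paper's $i,j$), and your added remarks on the $n=0$ case and the generating-function alternative are extra but harmless.
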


\begin {proof}
Note that
$$ \ss_k(y_1, \dots, y_n, z_1, \dots, z_m) = \sum_{i+j=k}\ss_i(y_1, 
\dots, y_n) \ss_j(z_1, \dots, z_m).$$

Thus, after reordering terms, the left hand side of (\ref{eq:2}) can be 
written as:
$$ \sum_{i=0}^n \Bigl( \ss_i(y_1, \dots, y_n) \cdot \sum_{j+l = n-i} 
(-1)^l \ss_j(z_1, \dots, z_m)\hh_l(z_1, \dots, z_m) \Bigr).$$

By Lemma~\ref{one}, the interior sum is zero unless $n-i=0,$ so 
we are only left with the term $ \ss_n (y_1, \dots, y_n).$
\end {proof}

\medskip

\begin {proof}[Proof of Proposition~\ref{main}]
For simplicity, we assume that $S=\hat \Gamma$ has no two-valent vertices. By Lemma~\ref{lem:torres} (b), this results in no loss of generality.

The partial braid graph $S$ consists of $\Gamma$ together with some 
strands used to take the braid closure. Let us 
denote the variables corresponding to those strands by $B_1, \dots, 
B_k$ (that is, each $B_i$ is the same as $U_e$ for the respective strand $e$). See 
Figure~\ref{fig:braid} for an example. 

\begin {figure}
\begin {center}
\input {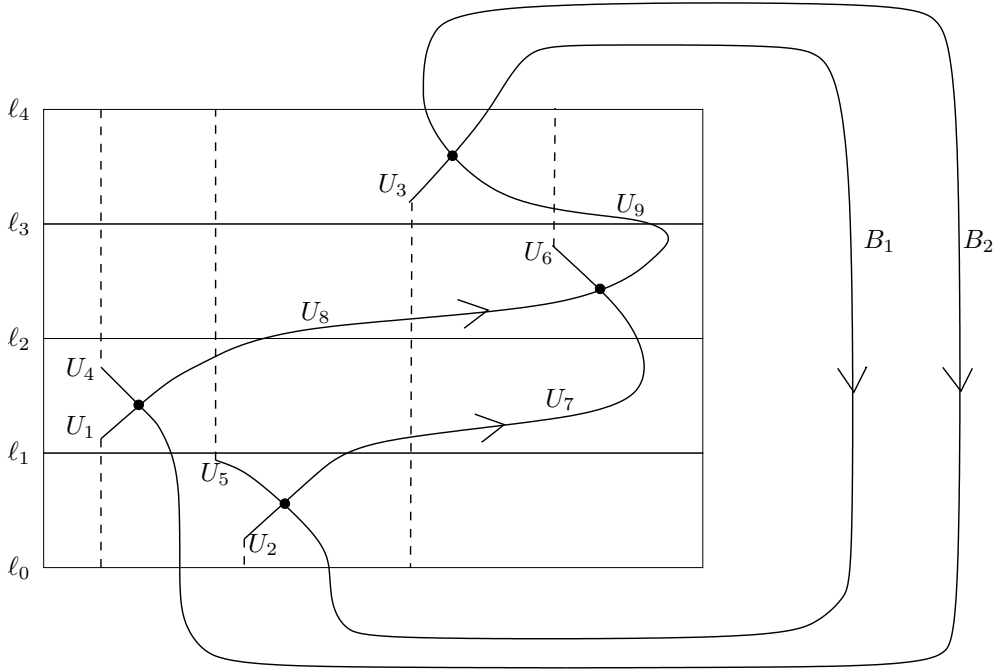}
\caption {The square $D$ is partitioned into strips and has the
dashed curves added. On each edge we mark a corresponding variable.}
\label {fig:full}
\end {center}
\end {figure}

We split the square $D = [0,1]\times [0,1]$ into horizontal strips by parallel lines, such that 
each crossing in $W$ lies in exactly one strip. We denote the parallel 
lines, including the bottom and the top of the square, by $\ell_0, 
\dots, \ell_m,$ in this order from bottom to top, such that the 
crossing 
$p_i \in W = \{p_1, \dots, p_m\}$ lies in the strip between 
$\ell_{i-1}$ and $\ell_i.$

We extend each edge $e \in \ein$ by a dashed curve going vertically 
down to the bottom of the square $D$, and each $e \in \eout$ by a 
dashed curve going vertically up to the top of the square. The 
intersections between dashed curves, or between a dashed curve and some 
part of the braid, are irrelevant.

After these constructions, the example in Figure~\ref{fig:braid} gets 
transformed into Figure~\ref{fig:full}.

Now each horizontal line $\ell_i$ intersects a total of $n+k$ curves 
(either regular edges or their dashed continuations), where $n$ is the 
the cardinality of $\ein$, which is the same as the cardinality of $\eout$. We 
denote by $F_i$ the multiset of edges intersecting $\ell_i,$ making no 
distinction between an edge and its dashed continuation. A multiset is the generalization of a set, where elements can have higher multiplicities. In our setting, the strands labeled by $B$'s may have higher multiplicities in $F_i$. If so, we want to count each 
edge with its corresponding multiplicities; for example, when we write 
$\sum_{e \in F_i}$, we count $e$ as many times as it appears in $F_i.$

The strip between $\ell_{i-1}$ and $\ell_i$ contains the crossing 
$p_i;$ we denote the two edges going out of $p_i$ by $a_i$ and $b_i$, 
and the two going in by $c_i$ and $d_i.$ If we eliminate $a_i$ and 
$b_i$ from $F_i$ (only once though, in case they appear multiple times) 
we obtain a multiset $G_i$, which is the same as the one obtain from 
$F_{i-1}$ by eliminating $c_i$ and $d_i$ (again, only once).

The claim of the proposition will follow from the following identity:
\begin {eqnarray}
\label {eq:ide}
 \prod_{e \in \eout} U_e - \prod_{e \in \ein} U_e &=& \sum_{i=1}^m 
L(p_i) \Bigl( \sum_{j\geq 0} (-1)^j \ss_{n-1-j} \bigl ( \{U_e | e \in 
G_i\}\bigr) \hh_j (B_1, \dots, B_k) \Bigr) \notag \\
& \ & + \sum_{i=1}^m
Q(p_i) \Bigl( \sum_{j\geq 0} (-1)^j \ss_{n-2-j} \bigl ( \{U_e | e \in
G_i\}\bigr) \hh_j (B_1, \dots, B_k) \Bigr).
\end {eqnarray}

For concreteness, let us write down the relation (\ref{eq:ide}) 
in the example pictured in Figure~\ref{fig:full}:

\medskip
$ U_4U_5U_6 - U_1U_2U_3  = $
\begin {eqnarray*}
&=& (U_5 + U_7 - U_2 - B_1) \Bigl( (U_1B_2 + U_3B_2 + U_1U_3) - (B_1 + 
B_2) (U_1 + U_3 + B_2) + (B_1^2 + B_1B_2 + B_2^2) \Bigr) \\
&+& (U_4 - U_8 - U_1 - B_2) \Bigl( (U_3U_5 + U_3U_7 + U_5U_7) - (B_1 +
B_2) (U_3 + U_5 + U_7) + (B_1^2 + B_1B_2 + B_2^2) \Bigr) \\
&+& (U_6 + U_9 -U_8 - U_7) \Bigl( (U_3U_4 + U_3U_5 + U_4U_5) - (B_1 +
B_2) (U_3 + U_4 + U_5) + (B_1^2 + B_1B_2 + B_2^2) \Bigr) \\
&+& (B_1 + B_2 -U_3 - U_9) \Bigl( (U_4U_5 + U_4U_6 + U_5U_6) - (B_1 +
B_2) (U_4 + U_5 + U_6) + (B_1^2 + B_1B_2 + B_2^2) \Bigr) \\
&+& (U_5U_7 - U_2B_1) \Bigl( (U_1 + U_3 + B_2) - (B_1+B_2) \Bigr) \\
&+& (U_4U_8 - U_1B_2 ) \Bigl( (U_3 + U_5 + U_7) - (B_1+B_2) \Bigr) \\
&+& (U_6U_9 - U_8U_7) \Bigl( (U_3 + U_4 + U_5) - (B_1+B_2) \Bigr) \\
&+& (B_1B_2 - U_3U_9) \Bigl( (U_4 + U_5 + U_6) - (B_1+B_2) \Bigr). \\
\end {eqnarray*}

In order to prove \eqref{eq:ide} in general, we start by observing that

$$ \ss_{n-j}  \bigl ( \{U_e | e \in F_i\}\bigr)  = \ss_{n-j}  \bigl ( 
\{U_e | e \in G_i\}\bigr) + (U_{a_i} + U_{b_i}) \ss_{n-j-1}  \bigl (  
\{U_e | e \in G_i\}\bigr) + U_{a_i}U_{b_i} \ss_{n-j-2}  \bigl (
\{U_e | e \in G_i\}\bigr); $$

$$ \ss_{n-j} \bigl ( \{U_e | e \in F_{i-1}\}\bigr)  = \ss_{n-j} \bigl ( 
\{U_e | e \in G_i\}\bigr) + (U_{c_i} + U_{d_i}) \ss_{n-j-1} \bigl ( 
\{U_e | e \in G_i\}\bigr) + U_{c_i}U_{d_i} \ss_{n-j-2} \bigl ( \{U_e | 
e \in G_i\}\bigr). $$
\medskip

Subtracting the second relation from the first, we get 

$$ L(p_i) \ss_{n-1-j} \bigl ( \{U_e | e \in G_i\}\bigr) + Q(p_i)   
\ss_{n-j-2} \bigl( \{U_e | e \in G_i\}\bigr) = \ss_{n-j} \bigl ( \{U_e 
| e \in 
F_{i}\}\bigr) -  \ss_{n-j}  \bigl ( \{U_e | e \in F_{i-1}\}\bigr). $$
\medskip
 
Thus, after changing the order of summation, the right hand side of 
(\ref{eq:ide}) can be re-written as
$$  \sum_{j\geq 0} (-1)^j \sum_{i=1}^m \Bigl( \ss_{n-j} \bigl ( \{U_e | 
e \in F_{i}\}\bigr) -  \ss_{n-j}  \bigl ( \{U_e | e \in F_{i-1}\}\bigr) 
\Bigr) \cdot \hh_j (B_1, \dots, B_k)  $$

\begin {equation}
\label {cr}
= \sum_{j\geq 0} (-1)^j \Bigl( \ss_{n-j} \bigl ( \{U_e |
e \in F_m \}\bigr) -  \ss_{n-j}  \bigl ( \{U_e | e \in F_0 \}\bigr) 
\Bigr) \cdot \hh_j (B_1, \dots, B_k).
\end {equation}

Note that $F_0$ is the union of $\ein$ with the set of strands $\{B_1, 
\dots, B_k\}.$ Applying Lemma~\ref{two}, we obtain the identity
$$  \sum_{j\geq 0} (-1)^j  \ss_{n-j} \bigl ( \{U_e |
e \in F_0 \}\bigr) \cdot \hh_j (B_1, \dots, B_k) = \ss_n  \bigl ( \{U_e 
| e \in \ein \}\bigr). $$

Similarly, $F_m$ is the union of $\eout$ with the set of strands 
$\{B_1, \dots, B_k\},$ hence
$$  \sum_{j\geq 0} (-1)^j  \ss_{n-j} \bigl ( \{U_e |
e \in F_m \}\bigr) \cdot \hh_j (B_1, \dots, B_k) = \ss_n  \bigl ( \{U_e
| e \in \eout \}\bigr). $$

Putting these together, we get that the expression in (\ref{cr}) equals
$$ \ss_n  \bigl ( \{U_e | e \in \eout \}\bigr) - \ss_n  \bigl ( \{U_e
| e \in \ein \}\bigr) = \prod_{e \in \eout} U_e - \prod_{e \in \ein} 
U_e,$$
as desired. \end {proof}

\subsection {Total braid graphs}
\label {sec:noas}
Assumption~\ref{as} in the definition of partial braid graph $S$ required that $S$ has some loose ends. Let us define a {\em total braid graph} $S$ to be the braid closure $S = \hat \Gamma$ of an open braid  graph $\Gamma$, with the property that $S$ has no loose ends. 

Interestingly, the proof of Theorem~\ref{thm:2} did not use Assumption~\ref{as}. However, this assumption is needed for the equality of the higher $\Tor$ groups in Conjecture~\ref{conj2}. To see this, consider the total braid graph $S$ from Figure~\ref{fig:total}. We have:
\begin{eqnarray*}
 \R &=& \zz[U_1, U_2, U_3, U_4], \\
L &=& (U_2 - U_4), \\
N &=& (U_2 - U_4), \\
Q &=& (U_1(U_2 - U_4), U_3(U_2 - U_4)).
\end {eqnarray*}

\begin {figure}
\begin {center}
\input {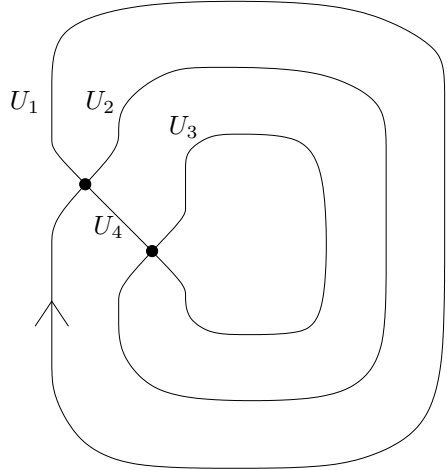}
\caption {A total braid graph.}
\label {fig:total}
\end {center}
\end {figure}

Let $\R' = \R/(U_2 - U_4).$ Then:
 $$\Tor_1(\R/L, \R/N) \cong \R',$$
 whereas
 $$ \Tor_1(\R/L, \R/Q) \cong  \R'\langle x, y \rangle / (U_1 x - U_3 y).$$
These $\R$-modules are not isomorphic, so Conjecture~\ref{conj2} fails for $S$.

\subsection {Computer experimentation}
Conjecture~\ref{conj2} (and its graded refinement, Conjecture~\ref{conj3}) can be checked for many partial braid graphs using the program {\em Macaulay2} \cite{M2}. The program gives presentations of the $\R$-modules $\Tor_i(\R/L, \R/N)$ and $\Tor_i(\R/L, \R/Q)$. For small graphs, it is visible that the modules are isomorphic. However, in general there is no simple way of checking whether two presentations give isomorphic modules. For larger graphs, we settled for verifying Conjecture~\ref{conj3} at the level of Hilbert series.

Precisely, for a homogeneous module $M$, let us denote by $r_d(M)$ the rank of the degree $d$-graded piece of $M$. {\em Macaulay2} automatically grades polynomial rings by letting each variable have grading $1$. (On $\R$, this corresponds to half of the grading $\gr_q$ from Subsection~\ref{sec:conjgr}.) Given a partial braid graph $S$ and $i \geq 0$, we consider the Hilbert series
$$ n_i(S)  = \sum_{d \geq 0} T^d \cdot r_d(\Tor_i(\R/L, \R/N))$$
 and
 $$ q_i(S) = \sum_{d \geq 0} T^d \cdot r_d(\Tor_i(\R/L, \R/Q)).$$

We know from Theorem~\ref{thm:2} that $q_0 = n_0$. Conjecture~\ref{conj3} would imply that 
\begin {equation}
\label {eq:hilbert}
q_i(S) = T^i \cdot n_i(S), \ \ \text{for all } i \geq 0.
\end {equation}

In practice, the relation \eqref{eq:hilbert} is much easier to check than the existence of module isomorphisms. In view of Lemma~\ref{lem:bound}, it makes sense to only look at the values $i \leq k$, where $k$ is the number of strands used to close up the partial braid $S$. (We know that $q_i = 0$ for $i > k$, and expect this to also be true for $n_i$.) For example, for the partial braid graph from Figure~\ref{fig:braid}, we find that
\begin {eqnarray*}
 q_0 = n_0 &=& (1+3T+2T^2-2T^3) / (1-T)^{4} , \\
 q_1 = T \cdot n_1 &=& T^4(3+T)/(1-T)^4, \\
 q_2 = T^2 \cdot n_2 &=& 0.
\end {eqnarray*}

 We focused most of our computer experiments on complete resolutions of decorated braid projections, where all crossings are singularized; that is, we took a braid $b$ on $k+1$ strands, singularized all its crossings, and closed up $k$ of the strands (all but the leftmost one) to get $S$. We verified that \eqref{eq:hilbert} holds (for $i \leq k$) for all connected $S$ of this form, with $k \leq 3$ and at most $7$ crossings.

\bibliographystyle{custom}
\bibliography{biblio}

\end{document}